\documentclass[unnumsec,webpdf,modern,large]{oup-authoring-template}

\onecolumn 

\graphicspath{{Fig/}}
\newcommand{\societylogo}{}

\theoremstyle{thmstyleone}%
\newtheorem{theorem}{Theorem}
\newtheorem{proposition}[theorem]{Proposition}%
\newtheorem{lemma}[theorem]{Lemma}%
\newtheorem{corollary}[theorem]{Corollary}%
\theoremstyle{thmstyletwo}%
\newtheorem{remark}{Remark}%
\theoremstyle{thmstylethree}%

\usepackage{mathtools}
\usepackage{bm}
\usepackage{mathrsfs}
\usepackage{microtype}
\allowdisplaybreaks[4]
\newcommand{\R}{\mathbb R}
\newcommand{\Sph}{\mathbb S^2}
\providecommand{\norm}[1]{\left\lVert #1\right\rVert}
\providecommand{\abs}[1]{\left\lvert #1\right\rvert}

\newcommand{\Tstar}{T_{\ast}}

\newcommand{\Ik}{\mathcal I}

\hypersetup{
  pdftitle={Continuation and threshold global strong solutions for the two-dimensional Landau--Lifshitz--Gilbert equation},
  pdfauthor={Pang-hung Chung and Dan Han},
  pdfsubject={Two-dimensional Landau--Lifshitz--Gilbert equation},
  pdfkeywords={Landau--Lifshitz--Gilbert equation, continuation criterion, ultraviolet tail, threshold energy}
}
\begin{document}

\journaltitle{xxx}
\DOI{DOI added during production}
\copyrightyear{YEAR}
\pubyear{YEAR}
\vol{XX}
\issue{x}
\access{Published: Date added during production}
\appnotes{Paper}

\firstpage{1}


\title[Continuation and threshold global strong solutions for LLG]{Continuation and threshold global strong solutions for the two-dimensional Landau--Lifshitz--Gilbert equation}

\author[1]{Pang-hung Chung}
\author[2,$\ast$]{Dan Han}

\address[1]{\orgdiv{Department of Applied Mathematics}, \orgname{Guangdong University of Education}, \orgaddress{Guangzhou 510640, P. R. China}}
\address[2]{\orgdiv{Department of Mathematics}, \orgname{University of Louisville}, \orgaddress{Louisville, KY 40245, USA}}

\corresp[$\ast$]{Corresponding author. \href{mailto:dan.han@louisville.edu}{dan.han@louisville.edu}}


\abstract{This paper studies finite-terminal continuation and subthreshold global existence for the two-dimensional Landau--Lifshitz--Gilbert equation in the $H_Q^2$ strong-solution class. The proof shows that if a maximal strong solution terminates at a finite time and the full-frequency $L^4_{t,x}$ norm of $\nabla u$ is finite on a terminal time window, then the strong solution can be continued. Furthermore, the solution can still be continued as long as the ultraviolet tail above one fixed dyadic cutoff belongs to $L^4_{t,x}$. As applications, we obtain global strong solutions when $E(u_0)<4\pi$; in the degree-zero sector, this energy threshold can be raised to $8\pi$.}

\keywords{Landau--Lifshitz--Gilbert equation, strong solution, continuation criterion, ultraviolet tail, threshold energy}

\maketitle

\noindent\textbf{Mathematics Subject Classification 2020:} 35Q40, 35K55, 35B44, 58E20.
\medskip

\section{Introduction}

This paper considers the two-dimensional $\Sph$-valued Landau--Lifshitz--Gilbert equation (abbreviated below as LLG)
\begin{equation}\label{eq:LLG-intro}
\partial_t u
=
 a\bigl(\Delta u+|\nabla u|^2u\bigr)+b\,u\times \Delta u,
\qquad
u:I\times\R^2\to \Sph,
\qquad a>0,
\ b\in\R.
\end{equation}
Here $a>0$ denotes the Gilbert damping strength, and $b\,u\times\Delta u$ is the precessional term. The extreme case $b=0$ degenerates into the harmonic map heat flow, while $a=0$ degenerates into the two-dimensional Schr\"odinger map. For early connections between LLG and the geometry of harmonic maps, see Guo--Hong, and for systematic background, see the monograph \cite{GuoHong,GuoDingBook}; moreover, the purely dissipative endpoint $b=0$ is directly connected with Struwe's classical theory for the two-dimensional harmonic map heat flow \cite{Struwe}. On the dispersive side, the geometric formulation of Schr\"odinger maps developed by Chang--Shatah--Uhlenbeck is a foundational point of comparison for the endpoint $a=0$ \cite{ChangShatahUhlenbeck}. Thus \eqref{eq:LLG-intro} carries both geometric parabolic and geometric dispersive structures. For this reason, it is a natural intermediate testing ground for comparing Schr\"odinger maps with the harmonic map heat flow.

In two dimensions, the scaling
\begin{equation*}
u_\lambda(t,x):=u(\lambda^2 t,\lambda x)
\end{equation*}
preserves both equation \eqref{eq:LLG-intro} and the Dirichlet energy
\begin{equation*}
E(u(t)):=\frac12\int_{\R^2}|\nabla u(t,x)|^2\,dx,
\end{equation*}
and hence the problem is energy-critical. For the classical analytic theory of pure Schr\"odinger maps, see the works of Nahmod--Stefanov--Uhlenbeck and McGahagan \cite{NahmodStefanovUhlenbeck,McGahagan}. A major development in the critical theory was the global small-data result of Bejenaru--Ionescu--Kenig--Tataru in the critical Sobolev spaces for dimensions $d\ge2$ \cite{BejenaruIonescuKenigTataru}. On the energy-critical continuation problem in two dimensions, Dodson--Smith proved that, as long as
\[
\|\nabla\phi\|_{\ell^2L^4_{t,x}}
=
\Biggl(\sum_{k\in\mathbb Z}\|P_k\nabla\phi\|_{L^4_{t,x}}^2\Biggr)^{1/2}
\]
is bounded on the terminal interval, the solution can be continued; here $P_k$ denotes the Littlewood--Paley projection to frequency about $2^k$. This compresses the large-data global theory into a problem of controlling one controlling norm. This point is precisely the starting point of the present paper. The reason Dodson--Smith's argument requires the whole $\ell^2L^4$ block is, in essence, the non-perturbative nature of the near-diagonal high-frequency coupling term $A_\ell\partial_\ell\psi_m$ in the differentiated system. See the main theorem and Section 6 of the original paper \cite{DodsonSmith}. In the broader low-regularity Schr\"odinger-map theory, the differentiated-variable and frequency-localized viewpoint had already played a decisive role in the global well-posedness theory of Ionescu--Kenig in dimensions $d\ge3$ \cite{IonescuKenigII}.

The question addressed here is: \emph{once the object of study is changed to the two-dimensional LLG with the simplest Gilbert damping term, can the above $\ell^2L^4$ condition be substantially relaxed?}

The answer given in this paper is affirmative, and the amount of relaxation is quite large. For the two-dimensional LLG, controlling the whole $\ell^2L^4$ block is not necessary. In fact, one does not even have to track the square-summed full-frequency quantity. The genuine parabolic dissipation produced by damping allows us to work directly at the $H^2$ level by energy methods, thereby reducing the finite-terminal continuation problem to a single critical $L^4$ control. The basic differential inequality proved in this paper is
\begin{equation}\label{eq:intro-H2-ineq}
\frac12\frac{d}{dt}\|\Delta u(t)\|_{L^2_x}^2
+\frac a2\|\nabla\Delta u(t)\|_{L^2_x}^2
\le C_{a,b}\|\nabla u(t)\|_{L^4_x}^4\|\Delta u(t)\|_{L^2_x}^2.
\end{equation}
Its structure is similar to Serrin-type continuation criteria in systems of Navier--Stokes or Ericksen--Leslie type: as long as $\nabla u\in L^4_{t,x}$ on a terminal time window, Gronwall's inequality controls $\|\Delta u\|_{L^2}$ and thereby excludes $H^2$ blow-up. This is analytically different from the undamped Schr\"odinger flow near harmonic maps, where modulation and dispersive mechanisms replace the positive parabolic term; an important model analysis is due to Gustafson--Kang--Tsai \cite{GustafsonKangTsaiCPAM}. Going further, by combining the damped $H^2$ estimate with an automatic dyadic low-frequency budget on finite time windows, one can compress the criterion into a single ultraviolet-tail criterion.

It should be stressed that this relaxation is not a formal game. Finite-time singularity mechanisms for Landau--Lifshitz type flows are well documented, including the finite-time singularity result of Ding--Wang and the refined two-dimensional equivariant blow-up dynamics of Xu--Zhao. At the purely dispersive endpoint, Merle--Rapha\"el--Rodnianski constructed finite-time blow-up dynamics for smooth equivariant solutions of the critical Schr\"odinger-map problem \cite{MerleRaphaelRodnianski}. Therefore, a criterion that truly identifies which kinds of quantities must diverge if a solution blows up in finite time has independent mathematical significance. Related published LLG results may be found in \cite{DingWang,XuZhao}.

In the presentation, this paper deliberately avoids simply replacing the caloric-gauge proof for Schr\"odinger maps by a ``damped version'' formulation. Instead, the argument starts from the geometric identities, derives parabolic $H^2$ a priori estimates, uses these estimates to establish finite-terminal continuation under full-frequency $L^4$ control, and finally reduces the continuation criterion to a single ultraviolet-tail condition. The dependence of stability, concentration, and oscillation mechanisms on the passage from harmonic map heat flow through Landau--Lifshitz dynamics to Schr\"odinger maps was analyzed in detail by Gustafson--Nakanishi--Tsai \cite{GustafsonNakanishiTsai}. Thus the present work is not a slight repair appended to the paper of Dodson--Smith; rather, using the additional dissipation of LLG, it rewrites the continuation mechanism itself in a form that is closer to the intrinsic nature of parabolic equations.

We finally explain the relation between this paper and the existing literature. At the weak-solution level, Alouges--Soyeur long ago constructed global weak solutions for LLG and exhibited non-uniqueness, while Di~Fratta--Innerberger--Praetorius proved weak--strong uniqueness in the framework of micromagnetics \cite{AlougesSoyeur,DiFrattaInnerbergerPraetorius}. For small-data global theory, Lin--Lai--Wang established global well-posedness for LLG in Morrey spaces, while Guti\'errez--de~Laire studied the Cauchy problem and self-similar solutions in a small $\mathrm{BMO}$ framework \cite{LinLaiWang,GutierrezDeLaire}. On the Schr\"odinger-map side, Bejenaru--Ionescu--Kenig established global existence and uniqueness for small critical data in dimensions $d\ge4$ \cite{BejenaruIonescuKenig}. For bubbling and singularity analysis of the two-dimensional LLG and related Landau--Lifshitz flows, see Harpes, Ding--Wang, and Xu--Zhao \cite{Harpes,DingWang,XuZhao}. For strong solutions and global regularity in higher dimensions, see Melcher; the standard $H^2$ local theory invoked in the continuation step of the present paper may be read together with Carbou--Fabrie, Wang--Guo, and the systematic treatment of Guo--Ding \cite{Melcher,CarbouFabrie,WangGuo,GuoDingBook}. For dynamics near nontrivial harmonic maps at the Schr\"odinger endpoint, Gustafson--Kang--Tsai proved asymptotic stability results in the equivariant setting \cite{GustafsonKangTsaiDuke}. On the other hand, \cite{LiZhao} established a subthreshold global theory for two-dimensional compact targets: when the target is $\Sph$ and $E(u_0)<4\pi$, its unique global $H$-solution satisfies a global $L^4_{t,x}$ control and converges to a constant in the energy space \cite{LiZhao}. The present paper is concerned with a different issue: in the standard strong-solution class, we compress the continuation criterion as far as possible until only one genuinely necessary terminal ultraviolet quantity remains. Once this criterion is combined with the subthreshold global $L^4$ theory of \cite{LiZhao}, Theorem \ref{thm:below4pi-global} follows automatically.

In the standard strong-solution framework, fix a constant vector $Q\in\Sph$ and write
\[
H_Q^m(\R^2;\Sph)
:=
\bigl\{u:\R^2\to\Sph:\ u-Q\in H^m(\R^2;\R^3)\bigr\}.
\]
We adopt the following convention for strong solutions. After fixing initial data $u_0\in H_Q^2(\R^2;\Sph)$, the standard local well-posedness theory gives a maximal existence time $\Tstar\in(0,\infty]$ and a unique $H_Q^2$ strong solution; on every compact subinterval $[0,T]\Subset[0,\Tstar)$ it satisfies
\[
u-Q\in C([0,T];H^2(\R^2;\R^3))
\cap L^2((0,T);H^3(\R^2;\R^3)),
\qquad
\partial_t u\in L^2((0,T);H^1(\R^2;\R^3)).
\]
If $\Tstar<\infty$, then this maximal time also satisfies the standard continuation alternative
\begin{equation*}
\limsup_{t\uparrow \Tstar}\norm{u(t)-Q}_{H^2}=+\infty.
\end{equation*}
Results of this kind go back to Carbou--Fabrie, Melcher, Wang--Guo, and the systematic account of Guo--Ding \cite{CarbouFabrie,Melcher,WangGuo,GuoDingBook}.

The first main result of this paper states that, for the damped two-dimensional LLG, finite-terminal continuation does not require the whole $\ell^2L^4$ controlling norm used by Dodson--Smith for two-dimensional Schr\"odinger maps; finiteness of the full-frequency $L^4$ norm on a terminal time window is already sufficient. More precisely, we prove the following theorem.

\begin{theorem}[Full-frequency continuation]\label{thm:L4-cont}
Let $u$ be a maximal strong solution of \eqref{eq:LLG-intro}, and suppose that
\[
0<\Tstar<\infty.
\]
If there exists $T_0\in[0,\Tstar)$ such that
\begin{equation}\label{eq:L4-ass}
\int_{T_0}^{\Tstar}\int_{\R^2}|\nabla u(t,x)|^4\,dxdt<\infty,
\end{equation}
then
\begin{equation*}
\sup_{T_0\le t<\Tstar}\norm{u(t)-Q}_{H^2}<\infty.
\end{equation*}
Consequently, $u$ can be continued beyond $t=\Tstar$, contradicting the maximality of $\Tstar$.
\end{theorem}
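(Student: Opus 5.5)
The plan is to prove the differential inequality \eqref{eq:intro-H2-ineq} on $[T_0,\Tstar)$ and then close with Gronwall, together with uniform bounds on the lower-order norms $\|u-Q\|_{L^2}$ and $\|\nabla u\|_{L^2}$.

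\textbf{Step 1: lower-order bounds.} First I record the energy identity
\[
\tfrac{d}{dt}E(u)=-a\int|\Delta u+|\nabla u|^2u|^2\,dx\le 0,
\]
which uses $u\cdot(u\times\Delta u)=0$ and that $\Delta u+|\nabla u|^2u$ is tangent. Hence $\|\nabla u(t)\|_{L^2}$ is bounded by $E(u_0)$. For the $L^2$ norm of $u-Q$, I use
\[
\partial_t u=a\Delta u+a|\nabla u|^2u+b\,u\times\Delta u
\qquad\text{and}\qquad
u\times\Delta u=\nabla\cdot(u\times\nabla u).
\]
Testing against $u-Q$ gives
\[
\tfrac{d}{dt}\|u-Q\|_{L^2}^2\lesssim \|\nabla u\|_{L^2}^2+\|\nabla u\|_{L^2}^2\|u-Q\|_{L^\infty},
\]
and $\|u-Q\|_{L^\infty}\le 2$. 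So $\|u-Q\|_{L^2}$ grows at most linearly on the finite interval. It therefore suffices to bound $\|\Delta u(t)\|_{L^2}$.

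\textbf{Step 2: the $H^2$ inequality.} I differentiate $\tfrac12\|\Delta u\|_{L^2}^2$, pair with $\Delta^2 u$, and integrate by parts once; this is justified by the regularity $u-Q\in L^2H^3$ and $\partial_tu\in L^2H^1$, or by mollification. This gives
\[
\tfrac12\tfrac{d}{dt}\|\Delta u\|_{L^2}^2=-\int\nabla\partial_t u\cdot\nabla\Delta u\,dx .
\]
The linear part produces $-a\|\nabla\Delta u\|_{L^2}^2$. In the precession term, the piece $u\times\nabla\Delta u$ is pointwise orthogonal to $\nabla\Delta u$ and drops out. What remains is
\[
b\int(\nabla u\times\Delta u)\cdot\nabla\Delta u
\;+\;a\int\nabla(|\nabla u|^2u)\cdot\nabla\Delta u .
\]
Each term is bounded by $\|\nabla\Delta u\|_{L^2}$ times a quantity of the form $\||\nabla u||\nabla^2u|\|_{L^2}+\||\nabla u|^3\|_{L^2}$. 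By Cauchy--Schwarz it then suffices to show
\[
\||\nabla u||\nabla^2u|\|_{L^2}^2+\|\nabla u\|_{L^6}^6\le \varepsilon\|\nabla\Delta u\|_{L^2}^2+C_\varepsilon\|\nabla u\|_{L^4}^4\|\Delta u\|_{L^2}^2 .
\]

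\textbf{Main obstacle.} This interpolation is the hard step, because the right-hand side must carry exactly the weight $\|\nabla u\|_{L^4}^4$, which is time-integrable by \eqref{eq:L4-ass}, and not the energy. My plan is as follows.

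For the first term, Hölder and Gagliardo--Nirenberg in 2D give
\[
\||\nabla u||\nabla^2u|\|_{L^2}\le\|\nabla u\|_{L^4}\|\nabla^2u\|_{L^4}
\qquad\text{and}\qquad
\|\nabla^2u\|_{L^4}^2\lesssim\|\nabla^2 u\|_{L^2}\|\nabla^3u\|_{L^2}.
\]
Together with $\|\nabla^2u\|_{L^2}=\|\Delta u\|_{L^2}$ and $\|\nabla^3u\|_{L^2}=\|\nabla\Delta u\|_{L^2}$, Young's inequality yields
\[
\|\nabla u\|_{L^4}^2\|\Delta u\|_{L^2}\|\nabla\Delta u\|_{L^2}\le\varepsilon\|\nabla\Delta u\|_{L^2}^2+C\|\nabla u\|_{L^4}^4\|\Delta u\|_{L^2}^2 .
\]

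For the cubic term, I would avoid $L^6$ bounds that cost energy factors. Instead I use the constraint $|u|=1$, which gives $|\nabla u|^2=-u\cdot\Delta u$. Then
\[
\|\nabla u\|_{L^6}^6=\int|\nabla u|^4(-u\cdot\Delta u)\le\|\nabla u\|_{L^4}^4\,\|\Delta u\|_{L^\infty}.
\]
The factor $\|\Delta u\|_{L^\infty}$ is not controlled by $\|\Delta u\|_{L^2}$ and $\|\nabla\Delta u\|_{L^2}$ alone in 2D, so this route needs repair. Alternatively, integrate by parts directly: $\int\nabla(|\nabla u|^2u)\cdot\nabla\Delta u$ contains $\int|\nabla u|^2\nabla u\cdot\nabla\Delta u$, which is bounded by
\[
\|\nabla u\|_{L^6}^3\|\nabla\Delta u\|_{L^2}.
\]
In 2D,
\[
\|\nabla u\|_{L^6}^3\lesssim\|\nabla u\|_{L^4}^2\,\|\nabla u\|_{L^\infty}
\qquad\text{and}\qquad
\|\nabla u\|_{L^\infty}\lesssim\|\nabla u\|_{L^4}^{1/2}\|\nabla^2u\|_{L^4}^{1/2}.
\]
I expect that tuning these exponents against $\|\nabla^2 u\|_{L^4}^2\lesssim\|\Delta u\|_{L^2}\|\nabla\Delta u\|_{L^2}$ gives the bound $\varepsilon\|\nabla\Delta u\|_{L^2}^2+C\|\nabla u\|_{L^4}^4\|\Delta u\|_{L^2}^2$, possibly plus an additional term $C\|\nabla u\|_{L^4}^4\|\nabla u\|_{L^2}^2$. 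That extra term is harmless, since $\|\nabla u\|_{L^4}^4\in L^1_t$ and $\|\nabla u\|_{L^2}$ is bounded by Step 1. Checking that these exponents really close is the step I expect to be most delicate.

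\textbf{Step 3: conclusion.} Absorbing the $\varepsilon$-terms gives
\[
\tfrac{d}{dt}\|\Delta u\|_{L^2}^2\le C\|\nabla u\|_{L^4}^4\bigl(\|\Delta u\|_{L^2}^2+1\bigr).
\]
Gronwall on $[T_0,\Tstar)$ then yields
\[
\sup_{T_0\le t<\Tstar}\|\Delta u(t)\|_{L^2}^2\le\bigl(\|\Delta u(T_0)\|_{L^2}^2+1\bigr)\exp\Bigl(C\int_{T_0}^{\Tstar}\|\nabla u\|_{L^4}^4\,dt\Bigr),
\]
which is finite by \eqref{eq:L4-ass}. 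Combined with Step 1, this gives $\sup_{T_0\le t<\Tstar}\|u(t)-Q\|_{H^2}<\infty$. This contradicts the continuation alternative $\limsup_{t\uparrow\Tstar}\|u(t)-Q\|_{H^2}=+\infty$, so $u$ extends beyond $\Tstar$.
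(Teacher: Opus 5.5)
Your proposal follows the paper's proof. Both use the energy law and relative $L^2$ growth for the lower-order norms, then the inequality $\frac12\frac{d}{dt}Y+\frac a2 Z\le C\|\nabla u\|_{L^4}^4Y$ with $Y=\|\Delta u\|_{L^2}^2$, $Z=\|\nabla\Delta u\|_{L^2}^2$, then Gronwall on $[T_0,\Tstar)$ and the continuation alternative. Your precession reduction to $\int(\partial_k u\times\Delta u)\cdot\partial_k\Delta u$ is the same as the paper's final form of $I_1$. The one organizational difference concerns the damping term. You integrate by parts once, pair $\nabla\partial_tu$ with $\nabla\Delta u$, and use the pointwise bound $|\nabla(|\nabla u|^2u)|\lesssim|\nabla u||\nabla^2u|+|\nabla u|^3$. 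The paper instead applies $\Delta$ to the equation, expands $\Delta(|\nabla u|^2u)$ into four terms, and discards $I_{2,4}\le0$ by sign. Your version involves only third derivatives, so it can be justified directly in the $H^2$ strong class. That lets you bypass the paper's smoothing-transfer lemma.

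The step you flagged as the main obstacle closes. The paper uses the two-dimensional inequality $\|f\|_{L^6}^3\le C\|f\|_{L^4}^2\|\nabla f\|_{L^2}$, which follows from $W^{1,1}\hookrightarrow L^2$ applied to $|f|^3$. With $f=\nabla u$ it gives $\|\nabla u\|_{L^6}^6\le C\|\nabla u\|_{L^4}^4Y$ outright, with no $\varepsilon Z$ needed. Your own $L^\infty$ route also works once the exponents are tracked:
\[
\|\nabla u\|_{L^6}^6\le\|\nabla u\|_{L^\infty}^2\|\nabla u\|_{L^4}^4\le C\|\nabla u\|_{L^4}^5Y^{1/4}Z^{1/4}\le\varepsilon Z+C_\varepsilon\|\nabla u\|_{L^4}^{20/3}Y^{1/3}.
\]
The identity $|\nabla u|^2=-u\cdot\Delta u$ from your abandoned first attempt gives $\|\nabla u\|_{L^4}^4\le Y$. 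Hence $\|\nabla u\|_{L^4}^{8/3}Y^{1/3}\le Y$, so no extra $\|\nabla u\|_{L^4}^4\|\nabla u\|_{L^2}^2$ term arises.
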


Theorem \ref{thm:L4-cont} shows that the finite-terminal continuation problem for the damped two-dimensional LLG can already be compressed to a full-frequency $L^4$ control. Its contrapositive says that, once finite-time blow-up occurs, the full-frequency $L^4$ quantity must diverge on the terminal time window. But the present paper does not stop at this level. Using the automatic low-frequency budget on finite time windows, one can further compress the full-frequency criterion into an ultraviolet criterion involving only one fixed high-frequency tail. This is the point that is closest to the spirit of the Dodson--Smith controlling norm while at the same time reflecting a genuine improvement brought by damping.

\begin{theorem}[Ultraviolet-tail continuation]\label{thm:uv-tail-cont}
Let $u$ be a maximal strong solution of \eqref{eq:LLG-intro}, and suppose that $0<\Tstar<\infty$. If there exist a terminal interval
\[
I=(T_0,\Tstar)\subset [0,\Tstar)
\]
and a dyadic cutoff $k_\ast\in\mathbb Z$ such that
\begin{equation}\label{eq:uv-tail-ass}
\norm{P_{\ge k_\ast}\nabla u}_{L^4_{t,x}(I\times\R^2)}<\infty,
\end{equation}
then
\begin{equation*}
\sup_{t\in I}\norm{u(t)-Q}_{H^2}<\infty,
\end{equation*}
and hence $u$ can be continued beyond $t=\Tstar$.
\end{theorem}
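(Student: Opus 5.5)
The plan is to reduce Theorem \ref{thm:uv-tail-cont} to Theorem \ref{thm:L4-cont}. It suffices to show that on the finite window $I=(T_0,\Tstar)$ the low-frequency part $P_{<k_\ast}\nabla u$ lies in $L^4_{t,x}(I\times\R^2)$ automatically. Then
\[
\norm{\nabla u}_{L^4_{t,x}(I\times\R^2)}\le \norm{P_{<k_\ast}\nabla u}_{L^4_{t,x}}+\norm{P_{\ge k_\ast}\nabla u}_{L^4_{t,x}}<\infty,
\]
so \eqref{eq:L4-ass} holds with this $T_0$. Theorem \ref{thm:L4-cont} then gives $\sup_{t\in I}\norm{u(t)-Q}_{H^2}<\infty$ and continuation beyond $\Tstar$.

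The low-frequency budget comes from energy monotonicity. I would first record the energy identity for strong solutions. Taking the inner product of \eqref{eq:LLG-intro} with $-\Delta u$, the precession term drops out since $(u\times\Delta u)\cdot\Delta u=0$. Using $u\cdot\partial_t u=0$ and the tangential projection identity, one obtains
\[
E(u(t))+a\int_{s}^{t}\int_{\R^2}\bigl|\Delta u+|\nabla u|^2u\bigr|^2\,dx\,d\tau = E(u(s)),
\]
so that $\sup_{t<\Tstar}\norm{\nabla u(t)}_{L^2}^2\le 2E(u_0)$. This is justified on each compact $[0,T]\Subset[0,\Tstar)$ by the regularity of the strong solution class and then passes to $I$.

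Next comes Bernstein's inequality in two dimensions:
\[
\norm{P_{<k_\ast}\nabla u(t)}_{L^4_x}\lesssim 2^{k_\ast/2}\norm{\nabla u(t)}_{L^2_x}\lesssim 2^{k_\ast/2}E(u_0)^{1/2}.
\]
Integrating the fourth power over $I$ gives
\[
\norm{P_{<k_\ast}\nabla u}_{L^4_{t,x}(I\times\R^2)}^4\lesssim 2^{2k_\ast}E(u_0)^2(\Tstar-T_0)<\infty.
\]
This is where finiteness of $\Tstar$ is used. Measurability and the pointwise-in-time bounds are unproblematic since $\nabla u\in C([0,T];H^1)$ on compact subintervals, and monotone convergence handles the open endpoint $\Tstar$.

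The only step requiring care is the energy inequality uniformly up to $\Tstar$, which is the main potential obstacle. Since $H^2$ norms may blow up at $\Tstar$, the identity must be derived on $[0,T]$ with $T<\Tstar$; there all terms are finite because $u-Q\in L^2H^3$ and $\partial_t u\in L^2H^1$. The resulting bound is independent of $T$. Beyond this, the argument is a direct combination of Bernstein's inequality with Theorem \ref{thm:L4-cont}, which carries the real analytic content: the damped $H^2$ inequality \eqref{eq:intro-H2-ineq} and Gronwall.
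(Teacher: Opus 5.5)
Your proposal is correct and follows the paper's proof essentially step for step. Like the paper, you split $\nabla u$ by the triangle inequality and bound the low-frequency part via H\"older in time, Bernstein and energy monotonicity, which is the paper's Lemma~\ref{lem:low-freq}, estimate \eqref{eq:low-L4-scalar}, and then you invoke Theorem~\ref{thm:L4-cont}. Your re-derivation of the energy identity by pairing with $-\Delta u$ is an equivalent form of Lemma~\ref{lem:energy}, which the paper simply cites.
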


Theorem \ref{thm:uv-tail-cont} shows that, for the two-dimensional LLG with Gilbert damping, the terminal high-frequency information that truly has to be tracked is not the whole $\ell^2L^4$ block, but only the ultraviolet tail above one fixed cutoff.

The third main conclusion of the paper states that, once the finite-terminal continuation criteria above are combined with the existing subthreshold global theory for the two-dimensional LLG, one obtains a global existence result in the standard strong-solution class.

\begin{theorem}[Subthreshold globality]\label{thm:below4pi-global}
Assume that the initial datum $u_0\in H_Q^2(\R^2;\Sph)$ satisfies
\begin{equation*}
E(u_0)<4\pi.
\end{equation*}
Then the maximal strong solution of \eqref{eq:LLG-intro} actually satisfies
\[
\Tstar=\infty.
\]
Equivalently, the maximal solution has no finite terminal time. Therefore, for every $T<\infty$, its strong-solution regularity persists on $[0,T]$ in the form
\[
u-Q\in C([0,T];H^2(\R^2;\R^3))\cap L^2((0,T);H^3(\R^2;\R^3)),
\qquad
\partial_tu\in L^2((0,T);H^1(\R^2;\R^3)),
\]
and it also satisfies the global spacetime control
\begin{equation*}
\int_0^{\infty}\int_{\R^2}|\nabla u(t,x)|^4\,dxdt<\infty.
\end{equation*}
\end{theorem}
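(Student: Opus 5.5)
The plan is to combine Theorem \ref{thm:L4-cont} with the subthreshold global theory of \cite{LiZhao}, identifying the strong solution with the $H$-solution constructed there. I would argue by contradiction and suppose $\Tstar<\infty$.

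\emph{Step 1: the two solutions coincide.} Since $u_0\in H_Q^2\subset H_Q^1$ and $E(u_0)<4\pi$, \cite{LiZhao} provides a unique global $H$-solution $v$ (energy class, $\nabla v\in L^\infty_tL^2_x\cap L^2_tH^1_x$ locally) with
\[
\int_0^\infty\int_{\R^2}|\nabla v|^4\,dxdt<\infty .
\]
On each $[0,T]\Subset[0,\Tstar)$ the strong solution $u$ satisfies $u-Q\in C([0,T];H^2)\cap L^2H^3$. By two-dimensional Sobolev embedding, $\nabla u\in L^\infty_tL^4_x$ there, so $u$ lies in the $H$-solution class of \cite{LiZhao}. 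Its energy identity
\[
E(u(t))+a\int_0^t\|\Delta u+|\nabla u|^2u\|_{L^2}^2\,ds=E(u_0)
\]
also holds by direct computation, using that the precession term is orthogonal to $\Delta u$. The uniqueness statement of \cite{LiZhao} then gives $u=v$ on $[0,\Tstar)$. I expect this step to be the main obstacle: one must check that the precise solution class in which \cite{LiZhao} asserts uniqueness contains $H^2_Q$ strong solutions with data in $H^2_Q$, including the correct notion of attaining the initial data. If their uniqueness class is stated differently, I would instead use a direct weak--strong uniqueness argument in the spirit of \cite{DiFrattaInnerbergerPraetorius}. That argument takes the difference $w=u-v$, estimates $\tfrac{d}{dt}\|w\|_{L^2}^2$ using the damping, bounds the nonlinear terms by $\|\nabla v\|_{L^4}^4$, applies Gronwall, and exploits $\nabla u\in L^\infty L^4$ on compact subintervals.

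\emph{Step 2: apply the continuation criterion.} Taking $T_0=0$, Step 1 gives
\[
\int_0^{\Tstar}\int|\nabla u|^4\le\int_0^\infty\int|\nabla v|^4<\infty ,
\]
which is exactly \eqref{eq:L4-ass}. Theorem \ref{thm:L4-cont} then yields $\sup_{t<\Tstar}\|u(t)-Q\|_{H^2}<\infty$. This contradicts the continuation alternative, so $\Tstar=\infty$.

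\emph{Step 3: conclusions.} With $\Tstar=\infty$, the regularity on every $[0,T]$ is simply the definition of a maximal strong solution on compact subintervals. Repeating Step 1 on every $[0,T]$ shows $u=v$ on $[0,\infty)$, so the global bound $\int_0^\infty\!\int|\nabla u|^4<\infty$ follows from that of $v$.
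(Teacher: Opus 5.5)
Your proposal is correct and follows the paper's proof essentially step for step: the paper also places the strong solution in the Li--Zhao energy class $\mathcal W$ on each $[0,T]\Subset[0,\Tstar)$ (Lemma~\ref{lem:strong-in-weakclass}), uses uniqueness there to identify it with the global subthreshold solution, and then feeds the resulting $L^4_{t,x}$ bound into Theorem~\ref{thm:L4-cont} to rule out $\Tstar<\infty$. Your Step 3 re-runs the identification once $\Tstar=\infty$ is known, in order to get the $L^4$ bound on all of $[0,\infty)$; this is slightly more careful than the paper's final line, which cites an identity derived under the contradiction hypothesis.
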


Theorem \ref{thm:below4pi-global} gives the subthreshold global strong-solution conclusion that this paper can close rigorously in all topological sectors; it relies on the two-dimensional subthreshold global theory and therefore naturally stops at $4\pi$. However, in the degree-zero sector, by combining the finite-terminal continuation criterion established here with the classical bubble-profile decomposition and degree quantization, one can push the energy obstruction further up to $8\pi$. This gives the fourth main theorem of the paper.

To state this improvement, we first fix in the introduction the topological notation needed for the degree-zero sector. For
\[
v\in H_Q^1(\R^2;\Sph),
\]
define its topological degree by
\begin{equation}\label{eq:degree-def}
\operatorname{deg}(v)
:=
\frac{1}{4\pi}\int_{\R^2} v\cdot(\partial_1 v\times \partial_2 v)\,dx.
\end{equation}
Since
\[
\bigl|v\cdot(\partial_1 v\times \partial_2 v)\bigr|
\le \frac12 |\nabla v|^2,
\]
the integral above converges absolutely; moreover, for every $v\in H_Q^1(\R^2;\Sph)$ one has the topological energy lower bound
\[
4\pi\,|\deg(v)|\le E(v).
\]
With this notation, we also obtain the following improved global result in the degree-zero sector.

\begin{theorem}[Degree-zero $8\pi$ globality]
\label{thm:deg0-8pi-global}
Assume that $u_0\in H_Q^2(\R^2;\Sph)$ satisfies
\begin{equation}\label{eq:deg0-8pi-main-ass}
\deg(u_0)=0,
\qquad
E(u_0)<8\pi.
\end{equation}
Then the maximal strong solution of \eqref{eq:LLG-intro} satisfies
\[
\Tstar=\infty.
\]
Furthermore, the globality here is obtained in the original strong-solution sense: for every prescribed finite time $T$,
\[
u-Q\in C([0,T];H^2(\R^2;\R^3))\cap L^2((0,T);H^3(\R^2;\R^3)),
\qquad
\partial_tu\in L^2((0,T);H^1(\R^2;\R^3)).
\]
\end{theorem}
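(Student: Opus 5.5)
We argue by contradiction: suppose $\Tstar<\infty$ for data satisfying \eqref{eq:deg0-8pi-main-ass}. By Theorem \ref{thm:uv-tail-cont}, for every terminal interval $I=(T_0,\Tstar)$ and every $k_\ast\in\mathbb Z$ we then have $\norm{P_{\ge k_\ast}\nabla u}_{L^4_{t,x}(I\times\R^2)}=\infty$. Equivalently, by Theorem \ref{thm:L4-cont}, $\nabla u\notin L^4_{t,x}$ near $\Tstar$. The goal is to show that this forces energy concentration at $t=\Tstar$, and that the amount of concentrated energy is incompatible with $E(u_0)<8\pi$ and degree zero.

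\emph{Step 1: the energy identity and concentration.} Pairing the equation with $-(\Delta u+|\nabla u|^2u)$, the precession term drops out and we get
\[
E(u(t))+a\int_0^t\norm{\Delta u+|\nabla u|^2u}_{L^2}^2\,ds=E(u_0).
\]
So $E(u(t))$ is nonincreasing and the tension field lies in $L^2_{t,x}$. Next we invoke a Struwe-type local $\varepsilon$-regularity estimate, which we adapt to LLG using the damping. The estimate says: if $\sup_t\sup_x\int_{B_R(x)}|\nabla u|^2<\varepsilon_0$ on $(T_0,\Tstar)$, then the local Ladyzhenskaya inequality gives $\int_{T_0}^{\Tstar}\!\int|\nabla u|^4\lesssim \varepsilon_0\bigl(\int\!\!\int|\tau(u)|^2+R^{-2}(\Tstar-T_0)E\bigr)<\infty$. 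That would contradict Theorem \ref{thm:L4-cont}. Hence there are points $x_i$, finitely many, at which the energy concentrates: $\limsup_{t\uparrow\Tstar}\int_{B_R(x_i)}|\nabla u(t)|^2\ge\varepsilon_0$ for every $R>0$.

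\emph{Step 2: bubbling.} Choose $t_n\uparrow\Tstar$ with $\norm{\tau(u(t_n))}_{L^2}\,(\Tstar-t_n)^{1/2}\to0$. This is possible because $\tau\in L^2_{t,x}$. The maps $u(t_n)$ then form a Palais--Smale-type sequence with vanishing tension relative to the concentration scale. Rescaling at a concentration point and applying the classical bubble-tree decomposition (Qing, Ding--Tian, Lin--Wang), together with energy identity and no-neck results for sequences with $L^2$-bounded tension, we obtain
\[
\lim_n E(u(t_n))=E(u^\ast)+\sum_{j=1}^J E(\omega_j),\qquad J\ge1.
\]
Here $u^\ast$ is the weak limit and the $\omega_j$ are nonconstant harmonic spheres. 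For a harmonic sphere into $\Sph$ we have $E(\omega_j)=4\pi|\deg\omega_j|$ with $\deg\omega_j\neq0$. The absence of necks also gives degree quantization: $\deg u(t_n)=\deg u^\ast+\sum_j\deg\omega_j$.

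\emph{Step 3: counting.} The degree of an $H^2_Q$ strong solution is conserved, since it is continuous in time with values in $\mathbb Z$; hence $\deg u(t_n)=0$. Since $E\le E(u_0)<8\pi$, at most one bubble appears, and $J=1$. Then $\deg u^\ast=-\deg\omega_1\neq0$, so $E(u^\ast)\ge4\pi|\deg u^\ast|\ge4\pi$. The total energy is therefore at least $4\pi+4\pi=8\pi>E(u_0)$, a contradiction. So $\Tstar=\infty$, and the stated regularity on each $[0,T]$ follows from the strong-solution convention.

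\emph{Main obstacle.} The delicate step is Step 2, specifically the energy identity with no neck energy and the degree additivity. These require an annular estimate showing that neither energy nor degree is lost in the neck regions. For harmonic map heat flow this is known from Qing--Tian and Lin--Wang. For LLG we plan to use that the precession term does not affect the time-slice equation $\tau(u(t_n))=f_n$ with $\norm{f_n}_{L^2}$ small at the bubble scale. The harmonic-map results for maps with $L^2$ tension then apply essentially verbatim, which is what the paper calls the classical bubble-profile decomposition.
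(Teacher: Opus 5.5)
Your proposal is correct and is essentially the paper's argument: contradiction, a finite-time bubble decomposition with energy identity, conservation and splitting of the degree, and the count that degree zero forces at least $4\pi+4\pi$ (your case analysis with $J=1$ is equivalent to the paper's inequality $|\sum_j d_j|+\sum_j|d_j|\ge 2$); the only structural difference is that you get energy concentration from Theorem~\ref{thm:L4-cont} via Struwe's $\varepsilon$-regularity, whereas the paper gets it from Proposition~\ref{prop:maximal-strong-singular} together with the cited decomposition. On your ``main obstacle'': degree additivity needs nothing beyond the energy identity, because $|v\cdot(\partial_1 v\times\partial_2 v)|\le\frac12|\nabla v|^2$ bounds the neck Jacobian by the neck energy (exactly as in Lemma~\ref{lem:degree-splitting}), whereas the energy identity itself does not follow verbatim from the $L^2$-tension theory, since $\|\tau(u(t_n))\|_{L^2}$ may be unbounded, so the annuli between radius $\sqrt{\Tstar-t_n}$ and a fixed radius must be controlled by the localized energy inequality of the flow, which holds for LLG because $\tau\cdot u_t=a|\tau|^2$ and $|u_t|^2=(a^2+b^2)|\tau|^2$.
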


The difference between Theorem \ref{thm:deg0-8pi-global} and Theorem \ref{thm:below4pi-global} is this: the latter uses the two-dimensional subthreshold global theory to upgrade a continuation criterion into a global theorem; the former splices together the analytic continuation mechanism established in the first four sections of this paper with bubble quantization under the degree-zero constraint. In other words, the $8\pi$ improvement does not come from a new $H^2$ differential inequality, but from ``the strong-solution continuation framework of this paper plus the geometric energy obstruction under the degree-zero constraint.''

It should be emphasized that the number $4\pi$ is not an internal assumption of the continuation argument in this paper. What is truly established internally here is the finite-terminal full-frequency $L^4$ continuation criterion and the single ultraviolet-tail continuation criterion. The energy thresholds appearing in the two global conclusions of the paper reappear only at the last step, when the two-dimensional subthreshold global theory and the degree-zero bubble quantization are brought in. In this way, the relation between the present paper and Dodson--Smith becomes quite clear: the former uses the parabolic dissipation brought by damping to lower the two-dimensional continuation criterion significantly from $\ell^2L^4$ to full-frequency $L^4$, and then further to a single ultraviolet tail; the latter closes the whole controlling norm in the dispersive framework of the pure Schr\"odinger map by means of the caloric gauge \cite{DodsonSmith,LiZhao}.

The remainder of the paper is organized as follows. Section \ref{sec:prelim} collects the geometric identities, energy controls, and the interpolation and low-frequency budgets needed later. Section \ref{sec:mainproof} establishes the core $H^2$ differential inequality and its Gronwall closure. Section \ref{sec:proofs} proves Theorems \ref{thm:L4-cont}--\ref{thm:below4pi-global} in order. Section \ref{sec:deg0} proves Theorem \ref{thm:deg0-8pi-global} and explains how, in the degree-zero sector, the general $4\pi$ energy obstruction can be pushed up to $8\pi$.

\section{Preliminary estimates}\label{sec:prelim}

This section collects three basic structures that will be used repeatedly below: the extrinsic form, the energy dissipation, and the relative $L^2$ control.

\subsection{Geometric identities and energy control}

Let $u:I\times\R^2\to\Sph$ be sufficiently smooth. From $|u|^2\equiv1$ one obtains
\begin{equation}\label{eq:constraint1}
u\cdot \partial_j u=0,
\qquad
u\cdot \Delta u = -|\nabla u|^2.
\end{equation}
Therefore the tension field
\begin{equation}\label{eq:tension}
\tau(u):=\Delta u+|\nabla u|^2u
\end{equation}
is always tangent, that is, $u\cdot \tau(u)=0$. Hence equation \eqref{eq:LLG-intro} can be written compactly as
\begin{equation}\label{eq:LLG-tau}
\partial_t u = a\tau(u)+b\,u\times\tau(u).
\end{equation}
On the other hand, since
\[
\partial_j(u\times \partial_j u)=\partial_j u\times \partial_j u + u\times \partial_j\partial_j u
=u\times \Delta u,
\]
one may rewrite \eqref{eq:LLG-intro} in divergence form as
\begin{equation}\label{eq:LLG-div}
\partial_t u-a\Delta u
= b\,\partial_j(u\times\partial_j u)+a|\nabla u|^2u.
\end{equation}
This formula is the starting point for the higher-order energy estimates below. In the present damped problem we retain the direct extrinsic formulation, because the term $a\Delta u$ supplies the coercive parabolic contribution used below.

We first record the energy dissipation identity.

\begin{lemma}[Energy law]\label{lem:energy}
Let $u$ be a strong solution of \eqref{eq:LLG-intro}. Then for every $0\le t_0<t_1<\Tstar$,
\begin{equation}\label{eq:energy-id}
E\bigl(u(t_1)\bigr)+a\int_{t_0}^{t_1}\int_{\R^2}|\tau(u)(t,x)|^2\,dxdt
=E\bigl(u(t_0)\bigr).
\end{equation}
In particular, $E(u(t))$ is nonincreasing.
\end{lemma}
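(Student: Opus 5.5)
The plan is to obtain \eqref{eq:energy-id} from the time derivative of $E(u(t))$, written in terms of the tension field via \eqref{eq:LLG-tau}. First I verify that the computation is legitimate in the strong-solution class. On a compact interval $[t_0,t_1]\Subset[0,\Tstar)$ we have $u-Q\in L^2(H^3)$ and $\partial_t u\in L^2(H^1)$, so $\nabla u\in L^2(H^2)$ with $\partial_t\nabla u\in L^2(L^2)$. The Lions--Magenes lemma then shows that $t\mapsto \|\nabla u(t)\|_{L^2}^2$ is absolutely continuous, with
\[
\frac{d}{dt}E(u(t))=\int_{\R^2}\nabla u\cdot\nabla\partial_t u\,dx=-\int_{\R^2}\Delta u\cdot\partial_t u\,dx
\]
for a.e. $t$. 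The integration by parts has no boundary terms, because $\nabla u(t)\in H^2$ and $\partial_t u(t)\in H^1$ for a.e. $t$.

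Next I replace $\Delta u$ by $\tau(u)$. Since $|u|\equiv1$ gives $u\cdot\partial_t u=0$ a.e., the normal part $|\nabla u|^2u$ of $\Delta u$ is orthogonal to $\partial_t u$. Hence $\Delta u\cdot\partial_t u=\tau(u)\cdot\partial_t u$. Integrability is fine here: $|\nabla u|^2\in L^2_x$ by the embedding $H^1\subset L^4$, so both terms lie in $L^1_x$.

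Then I insert \eqref{eq:LLG-tau} and compute
\[
\tau(u)\cdot\partial_t u=a|\tau(u)|^2+b\,\tau(u)\cdot(u\times\tau(u))=a|\tau(u)|^2,
\]
because the triple product $\tau\cdot(u\times\tau)$ vanishes pointwise. This gives $\frac{d}{dt}E(u(t))=-a\|\tau(u)(t)\|_{L^2}^2$ for a.e. $t$. Integrating from $t_0$ to $t_1$, which absolute continuity permits, yields \eqref{eq:energy-id}. Monotonicity of $E(u(t))$ follows because $a>0$.

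The only delicate point is the justification step, that is, the absolute continuity of $E(u(t))$ and the vanishing of the boundary terms. If one prefers to avoid Lions--Magenes, an alternative is to approximate $u_0$ by smooth data in $H^2_Q$. By continuous dependence in the local theory, the identity then passes to the limit, since $\tau(u)\to\tau(u^\varepsilon)$ in $L^2_{t,x}$. I expect the regularity bookkeeping above to suffice.
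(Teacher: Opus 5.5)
Your proposal is correct and follows the same route as the paper. You differentiate $E(u(t))$, integrate by parts to get $-\int\Delta u\cdot\partial_t u$, drop the normal component using $u\cdot\partial_t u=0$, and remove the precession term via $\tau(u)\cdot(u\times\tau(u))=0$; the only difference is that you spell out the justification ($\nabla u\in H^1(0,T;L^2)$, Lions--Magenes) that the paper compresses into a single sentence appealing to strong-solution regularity.
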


\begin{proof}
By the strong-solution regularity $u-Q\in C([0,\Tstar);H^2)$ and $\partial_tu\in L^2_{\mathrm{loc}}([0,\Tstar);H^1)$, the pairings, integrations by parts, and chain rule below are justified. By integration by parts,
\[
\frac{d}{dt}E(u(t))
=
\int_{\R^2}\nabla u:\nabla u_t\,dx
=-\int_{\R^2}\Delta u\cdot u_t\,dx.
\]
Using \eqref{eq:tension} and $u_t\cdot u=0$, we get
\[
-\Delta u\cdot u_t
=-(\tau(u)-|\nabla u|^2u)\cdot u_t
=-\tau(u)\cdot u_t.
\]
Substituting \eqref{eq:LLG-tau}, and observing that
\[
\tau(u)\cdot (u\times \tau(u))=0,
\]
we obtain
\[
\frac{d}{dt}E(u(t))=-a\int_{\R^2}|\tau(u)|^2\,dx.
\]
Integrating in time gives \eqref{eq:energy-id}.
\end{proof}

The following pointwise identity connects $L^4$ control with the $H^2$ energy.

\begin{lemma}[Projection identity]\label{lem:L4-by-Lap}
For every $u:\R^2\to\Sph$, one has the pointwise identity
\begin{equation}\label{eq:uDelta}
|\nabla u|^2 = -u\cdot \Delta u,
\end{equation}
and consequently
\begin{equation*}
|\nabla u|^4 \le |\Delta u|^2
\qquad\text{a.e. on }\R^2.
\end{equation*}
Therefore
\begin{equation}\label{eq:L4-by-Delta-int}
\norm{\nabla u}_{L^4_x}^4\le \norm{\Delta u}_{L^2_x}^2.
\end{equation}
\end{lemma}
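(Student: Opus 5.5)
The plan is to differentiate the constraint $|u|^2\equiv 1$ twice and then apply Cauchy--Schwarz pointwise. First, for $u\in H^2_{\mathrm{loc}}$ with values in $\Sph$, differentiating $u\cdot u=1$ in $x_j$ gives $u\cdot\partial_j u=0$ a.e.; this is \eqref{eq:constraint1}. Differentiating again in $x_j$ and summing over $j$ gives
\[
0=\partial_j(u\cdot\partial_j u)=|\partial_j u|^2+u\cdot\partial_j\partial_j u,
\]
which is \eqref{eq:uDelta}, namely $|\nabla u|^2=-u\cdot\Delta u$ a.e. For smooth $u$ this is immediate. For a general $H_Q^2$ map I would justify the second differentiation by the product rule in Sobolev spaces. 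Since $\partial_j u\in H^1_{\mathrm{loc}}$ and $u\in L^\infty\cap H^2_{\mathrm{loc}}$, the product $u\cdot\partial_j u$ lies in $W^{1,1}_{\mathrm{loc}}$, and its weak derivative is the usual Leibniz expression. Alternatively, one can approximate by smooth maps and pass to the limit a.e. along a subsequence.

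Second, since $|u|=1$, Cauchy--Schwarz gives pointwise
\[
|\nabla u|^2=|u\cdot\Delta u|\le |u|\,|\Delta u|=|\Delta u|.
\]
Squaring yields $|\nabla u|^4\le|\Delta u|^2$ a.e. Integrating over $\R^2$ gives \eqref{eq:L4-by-Delta-int}; the right side is finite because $\Delta u=\Delta(u-Q)\in L^2$.

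The only delicate point is the regularity justification of the second derivative of the constraint for Sobolev maps. Even this is routine: the weak Leibniz rule applies because every product involved is locally integrable, as $\partial_j u\in L^4_{\mathrm{loc}}$ by Sobolev embedding in two dimensions and $u$ is bounded.
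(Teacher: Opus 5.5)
Your proposal is correct and matches the paper's proof, which obtains $|\nabla u|^2=-u\cdot\Delta u$ from the differentiated constraint \eqref{eq:constraint1}, then applies Cauchy--Schwarz with $|u|=1$ and integrates. Your justification of the weak Leibniz rule for $H^2_{\mathrm{loc}}$ maps adds a regularity check the paper omits, since it derives \eqref{eq:constraint1} only for sufficiently smooth $u$.
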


\begin{proof}
The first formula is exactly the second identity in \eqref{eq:constraint1}. By Cauchy--Schwarz,
\[
|\nabla u|^4=|u\cdot \Delta u|^2\le |u|^2|\Delta u|^2=|\Delta u|^2.
\]
Integrating yields \eqref{eq:L4-by-Delta-int}.
\end{proof}

We next give the relative $L^2$ control.

To connect with the local strong-solution class $H_Q^2$, we also need to control the $L^2$ norm of $u-Q$.

\begin{lemma}[Relative $L^2$ growth]\label{lem:L2-relative}
Fix $Q\in\Sph$. For every strong solution $u$, one has
\begin{equation}\label{eq:L2-growth}
\frac12\frac{d}{dt}\norm{u(t)-Q}_{L^2_x}^2
\le 2aE\bigl(u(t)\bigr)
\le 2aE\bigl(u_0\bigr).
\end{equation}
Consequently, for every $t\in[0,\Tstar)$,
\begin{equation}\label{eq:L2-growth-int}
\norm{u(t)-Q}_{L^2_x}^2
\le
\norm{u_0-Q}_{L^2_x}^2+4aE(u_0)t.
\end{equation}
\end{lemma}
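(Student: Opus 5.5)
We differentiate $\norm{u(t)-Q}_{L^2_x}^2$ in time, substitute the equation in its compact form \eqref{eq:LLG-tau}, and then use the sphere constraint to turn everything into energy-level quantities. By the strong-solution regularity ($u-Q\in C([0,T];H^2)$, $\partial_tu\in L^2((0,T);H^1)$), the map $t\mapsto\norm{u(t)-Q}_{L^2}^2$ is absolutely continuous, with
\[
\frac12\frac{d}{dt}\norm{u-Q}_{L^2_x}^2=\int_{\R^2}(u-Q)\cdot\partial_tu\,dx .
\]
Substituting \eqref{eq:LLG-tau} gives $\partial_tu=a\tau(u)+b\,u\times\tau(u)$. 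Since $u\cdot\tau(u)=0$ and $u\cdot(u\times\tau)=0$, we may replace $u-Q$ by $-Q$ in the pairing. In fact it is more convenient to keep the original extrinsic form \eqref{eq:LLG-intro} and estimate each term separately.

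\textbf{Precession term.} We have $(u-Q)\cdot(u\times\Delta u)=-Q\cdot(u\times\Delta u)$. By \eqref{eq:LLG-div}, $u\times\Delta u=\partial_j(u\times\partial_ju)$, a divergence of an $L^1$ quantity, and $Q$ is constant. Hence
\[
-\int_{\R^2} Q\cdot\partial_j(u\times\partial_j u)\,dx=0,
\]
which is justified because $u\times\partial_ju\in W^{1,1}$ (indeed it lies in $H^1$, and approximation handles the rest). So the $b$-term contributes nothing.

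\textbf{Damping term.} Here we need to bound
\[
a\int_{\R^2}(u-Q)\cdot\bigl(\Delta u+|\nabla u|^2u\bigr)\,dx .
\]
Integrating by parts gives $\int(u-Q)\cdot\Delta u=-\int|\nabla u|^2=-2E$, with no boundary terms since $u-Q\in H^2$. For the remaining piece, $(u-Q)\cdot u=1-Q\cdot u=\tfrac12|u-Q|^2\le 2$, so
\[
\int_{\R^2}|\nabla u|^2\,(u-Q)\cdot u\,dx\le 2\int_{\R^2}|\nabla u|^2\,dx=4E .
\]
The total is therefore at most $a(-2E+4E)=2aE(u(t))$, which is the first inequality in \eqref{eq:L2-growth}. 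The bound $E(u(t))\le E(u_0)$ is Lemma~\ref{lem:energy}. Integrating from $0$ to $t$ and multiplying by $2$ yields \eqref{eq:L2-growth-int}.

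The only delicate step is justifying that the divergence term integrates to zero and that the integrations by parts hold with only the available regularity. Both follow from $u-Q\in H^2$, where $\nabla u\in H^1\subset L^4$ ensures all the products are integrable. The identity can be checked first for smooth approximants and then passed to the limit, or simply verified in the sense of distributions for a.e. $t$.
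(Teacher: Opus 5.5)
Your proposal is correct and follows the paper's proof essentially line by line. It uses the same split of $\int(u-Q)\cdot\partial_tu\,dx$ into three pieces: the Laplacian term, equal to $-2aE$ after integration by parts; the cubic term, bounded via $(u-Q)\cdot u=1-Q\cdot u\le 2$; and the precession term, which vanishes by the divergence identity $u\times\Delta u=\partial_j(u\times\partial_ju)$.

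One small precision: $u\times\partial_ju$ lies in $H^1$ but not in general in $W^{1,1}(\R^2)$. The cleanest justification of the vanishing is to write $Q\cdot(u\times\Delta u)=Q\cdot\bigl((u-Q)\times\Delta u\bigr)$ and note that $(u-Q)\times\partial_ju\in W^{1,1}$. Integrating by parts then gives $-\sum_j\int Q\cdot(\partial_ju\times\partial_ju)\,dx=0$.
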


\begin{proof}
By the strong-solution regularity $u-Q\in C([0,\Tstar);H^2)$ and $\partial_tu\in L^2_{\mathrm{loc}}([0,\Tstar);H^1)$, the function $t\mapsto \|u(t)-Q\|_{L^2_x}^2$ is absolutely continuous, and the pairings and integrations by parts below are justified. Set $v:=u-Q$. Then
\[
\frac12\frac{d}{dt}\norm{v}_{L^2_x}^2
=
\int_{\R^2}v\cdot u_t\,dx
=:J_1+J_2+J_3,
\]
where
\[
J_1=a\int v\cdot \Delta u,
\qquad
J_2=a\int |\nabla u|^2 v\cdot u,
\qquad
J_3=b\int v\cdot (u\times \Delta u).
\]
For the first term, integration by parts gives
\[
J_1=-a\int |\nabla u|^2\,dx=-2aE(u(t)).
\]
For the second term, using $|u|=|Q|=1$, we have $v\cdot u =1-Q\cdot u\le 2$, whence
\[
J_2\le 2a\int |\nabla u|^2\,dx=4aE(u(t)).
\]
The last term is eliminated by the divergence form:
\[
J_3
=-b\int Q\cdot (u\times \Delta u)
=-b\int Q\cdot \partial_j(u\times \partial_j u)
=0.
\]
Combining these estimates yields
\[
\frac12\frac{d}{dt}\norm{u-Q}_{L^2_x}^2
\le 2aE(u(t)).
\]
Energy monotonicity then gives \eqref{eq:L2-growth} and \eqref{eq:L2-growth-int}.
\end{proof}

This section now establishes two types of technical tools needed later. The first type consists of two-dimensional Ladyzhenskaya/Gagliardo--Nirenberg inequalities; the second type is the automatic low-frequency boundedness on terminal time windows. The latter is precisely the bridge that compresses the ``full-frequency $L^4$ continuation criterion'' into the ``single ultraviolet-tail continuation criterion.''

\subsection{Interpolation tools and low-frequency budgets}\label{sec:aux}

\begin{lemma}[Ladyzhenskaya]\label{lem:lady}
For every $f\in H^1(\R^2)$, one has
\begin{equation}\label{eq:lady}
\norm{f}_{L^4_x}^2\le C\norm{f}_{L^2_x}\norm{\nabla f}_{L^2_x}.
\end{equation}
\end{lemma}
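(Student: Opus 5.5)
To prove the Ladyzhenskaya inequality \eqref{eq:lady}, I will first treat $f\in C_c^\infty(\R^2)$ and then pass to general $f\in H^1(\R^2)$ by density. For smooth compactly supported $f$, apply the one-dimensional fundamental theorem of calculus to $f^2$ in each variable separately. In the $x_1$ variable,
\[
f(x_1,x_2)^2 = 2\int_{-\infty}^{x_1} f\,\partial_1 f\,(s,x_2)\,ds ,
\]
so that
\[
\sup_{x_1}|f(x_1,x_2)|^2\le 2\int_{\R}|f\,\partial_1 f|(s,x_2)\,ds=:g_1(x_2).
\]
The same argument in the $x_2$ variable gives
\[
\sup_{x_2}|f(x_1,x_2)|^2\le g_2(x_1):=2\int_{\R}|f\,\partial_2 f|(x_1,s)\,ds .
\]

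Next I write $|f|^4=|f|^2\cdot|f|^2\le g_1(x_2)\,g_2(x_1)$ and integrate over $\R^2$. Since the right-hand side is a product of a function of $x_2$ and a function of $x_1$, Fubini gives
\[
\norm{f}_{L^4}^4\le \Bigl(\int_\R g_1\Bigr)\Bigl(\int_\R g_2\Bigr)
=4\norm{f\partial_1 f}_{L^1}\norm{f\partial_2 f}_{L^1}.
\]
By Cauchy--Schwarz, each factor satisfies $\norm{f\partial_j f}_{L^1}\le\norm{f}_{L^2}\norm{\nabla f}_{L^2}$. Therefore
\[
\norm{f}_{L^4}^4\le 4\norm{f}_{L^2}^2\norm{\nabla f}_{L^2}^2 .
\]
Taking square roots yields \eqref{eq:lady} with $C=2$; a sharper constant is possible but is not needed.

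For the density step, take $f_n\in C_c^\infty$ with $f_n\to f$ in $H^1$. The inequality applied to $f_n-f_m$ shows that $(f_n)$ is Cauchy in $L^4$, so it converges in $L^4$ to a limit that must coincide with $f$ almost everywhere. Passing to the limit on both sides gives \eqref{eq:lady} for $f$. There is no real obstacle here: the only point needing care is this density argument, and the Cauchy estimate in $L^4$ handles it.
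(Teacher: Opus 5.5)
Your proof is correct. It differs from the paper's mainly in what is quoted rather than proved.

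The paper sets $g=|f|^2$ and invokes the two-dimensional Sobolev inequality $\|g\|_{L^2}\le C\|\nabla g\|_{L^1}$ (the embedding $W^{1,1}(\R^2)\hookrightarrow L^2(\R^2)$) as a black box. It then bounds $\|\nabla g\|_{L^1}=\|2f\cdot\nabla f\|_{L^1}\le 2\|f\|_{L^2}\|\nabla f\|_{L^2}$ by H\"older. Your slicing argument controls $f^2$ along each coordinate line by the $L^1$ norm of $\partial_j(f^2)=2f\,\partial_j f$, multiplies the two one-dimensional bounds, and integrates with Fubini. That is exactly the Gagliardo--Nirenberg proof of the same embedding, carried out directly on $g=f^2$, so the underlying mechanism is the same.

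The paper's version is shorter. Yours is self-contained, gives the explicit constant $C=2$, and makes the approximation step explicit. The paper tacitly uses that $|f|^2\in W^{1,1}(\R^2)$ with $\nabla|f|^2=2f\cdot\nabla f$ for general $f\in H^1$, which itself needs a density argument.

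The paper later applies the lemma to vector-valued functions such as $\nabla^2u$ and $\Delta u$. Your argument covers that case verbatim once you replace $f^2$ by $|f|^2$ and $f\,\partial_jf$ by $f\cdot\partial_jf$.
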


\begin{proof}
Let $g:=|f|^2$. The two-dimensional Sobolev embedding $W^{1,1}(\R^2)\hookrightarrow L^2(\R^2)$ gives
\[
\norm{f}_{L^4_x}^2=\norm{g}_{L^2_x}\le C\norm{\nabla g}_{L^1_x}.
\]
On the other hand,
\[
\nabla g=\nabla(|f|^2)=2 f\cdot \nabla f,
\]
and hence, by H\"older's inequality,
\[
\norm{\nabla g}_{L^1_x}\le 2\norm{f}_{L^2_x}\norm{\nabla f}_{L^2_x}.
\]
Combining the two inequalities gives
\[
\norm{f}_{L^4_x}^2\le C\norm{f}_{L^2_x}\norm{\nabla f}_{L^2_x}.
\]
This is \eqref{eq:lady}.
\end{proof}

\begin{lemma}[Interpolation]\label{lem:derived-GN}
Assume $u-Q\in H^3(\R^2)$. Then there exists an absolute constant $C>0$ such that
\begin{align}
\norm{\nabla^2 u}_{L^4_x}^2
&\le C\norm{\Delta u}_{L^2_x}\norm{\nabla\Delta u}_{L^2_x},
\label{eq:nabla2-L4}
\\
\norm{\Delta u}_{L^4_x}^2
&\le C\norm{\Delta u}_{L^2_x}\norm{\nabla\Delta u}_{L^2_x},
\label{eq:Delta-L4}
\\
\norm{\nabla u}_{L^6_x}^3
&\le C\norm{\nabla u}_{L^4_x}^2\norm{\Delta u}_{L^2_x}.
\label{eq:nabla-L6}
\end{align}
\end{lemma}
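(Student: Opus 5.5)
The three estimates are all consequences of Ladyzhenskaya's inequality (Lemma \ref{lem:lady}) combined with the $L^2$-boundedness of the Riesz transforms. The only extra input is the sphere constraint, which is needed for the third one. Throughout, $u-Q\in H^3$ ensures that $\nabla u\in H^2$, $\nabla^2 u\in H^1$ and $\Delta u\in H^1$, so every function to which Lemma \ref{lem:lady} is applied belongs to $H^1(\R^2)$.

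For \eqref{eq:nabla2-L4}, apply Lemma \ref{lem:lady} componentwise to $f=\partial_i\partial_j u$:
\[
\norm{\partial_i\partial_j u}_{L^4}^2\le C\norm{\partial_i\partial_j u}_{L^2}\norm{\nabla\partial_i\partial_j u}_{L^2}.
\]
Plancherel gives $\norm{\partial_i\partial_j u}_{L^2}\le\norm{\Delta u}_{L^2}$, since $|\xi_i\xi_j|\le|\xi|^2$. In the same way, $\norm{\partial_k\partial_i\partial_j u}_{L^2}\le\norm{\nabla\Delta u}_{L^2}$, because $|\xi_k\xi_i\xi_j|\le|\xi|\,|\xi|^2$. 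Summing over $i,j$ yields \eqref{eq:nabla2-L4}. Estimate \eqref{eq:Delta-L4} follows the same way: apply Lemma \ref{lem:lady} directly to $f=\Delta u$, or note that it is a trace of $\nabla^2u$.

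For \eqref{eq:nabla-L6}, write $|\nabla u|^6=|\nabla u|^2\cdot|\nabla u|^4$. The idea is to apply Lemma \ref{lem:lady} to $g:=|\nabla u|^2$, which gives $\norm{\nabla u}_{L^8}^4$, not $L^6$. So interpolate instead. By H\"older,
\[
\norm{\nabla u}_{L^6}^6\le\norm{\nabla u}_{L^4}^2\,\norm{\nabla u}_{L^8}^4=\norm{\nabla u}_{L^4}^2\norm{g}_{L^4}^2 .
\]
Lemma \ref{lem:lady} gives $\norm{g}_{L^4}^2\le C\norm{g}_{L^2}\norm{\nabla g}_{L^2}$. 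Moreover $\norm{g}_{L^2}=\norm{\nabla u}_{L^4}^2$, and $|\nabla g|\le 2|\nabla u||\nabla^2u|$, so
\[
\norm{\nabla g}_{L^2}\le 2\norm{\nabla u}_{L^6}\norm{\nabla^2 u}_{L^3}.
\]
This produces $\nabla^2u$ in $L^3$, which is not what we want; a cleaner route uses the constraint.

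Lemma \ref{lem:L4-by-Lap} gives the pointwise bound $|\nabla u|^2\le|\Delta u|$. Hence
\[
\int|\nabla u|^6=\int|\nabla u|^4\,|\nabla u|^2\le\int|\nabla u|^4|\Delta u|\le\norm{\nabla u}_{L^8}^4\norm{\Delta u}_{L^2}.
\]
This again needs $L^8$, so instead I split the integrand as
\[
|\nabla u|^6=|\nabla u|^2\cdot|\nabla u|^4\le|\nabla u|^2\,|\Delta u|^2 .
\]
That loses sharpness too. The target is $\norm{\nabla u}_{L^6}^3\lesssim\norm{\nabla u}_{L^4}^2\norm{\Delta u}_{L^2}$, which is scale-consistent. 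The natural proof is an integration by parts: write $|\nabla u|^6=\partial_ju\cdot\partial_ju\,|\nabla u|^4$ and move $\partial_j$ off one factor of $u-Q$. This gives
\[
\int|\nabla u|^6=-\int(u-Q)\cdot\Delta u\,|\nabla u|^4-\int(u-Q)\cdot\partial_ju\,\partial_j|\nabla u|^4 .
\]
Since $|u-Q|\le2$, the first term is bounded by $2\int|\Delta u||\nabla u|^4$. The second term is bounded by $C\int|\nabla u|^4|\nabla^2u|$.

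Both terms are therefore controlled by $\int|\nabla u|^4|\nabla^2 u|$. Cauchy--Schwarz bounds this by $\norm{\nabla u}_{L^6}^3\,\norm{\nabla u^{}}_{L^2}\ldots$, which does not close, so I would use
\[
\int|\nabla u|^4|\nabla^2u|\le\norm{\nabla u}_{L^6}^3\norm{|\nabla u|\,|\nabla^2u|}_{L^2}\cdot(\ldots)
\]
or, better, the standard bound $\int|\nabla u|^4|\nabla^2u|\le\norm{\nabla u}_{L^6}^3\bigl(\int|\nabla u|^2|\nabla^2u|^2\bigr)^{1/2}$. This is the delicate step. As an alternative that avoids it, note that $\nabla u\in L^4$ and $\nabla^2u\in L^2$, and that $\dot H^1$ embeds into BMO. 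The Gagliardo--Nirenberg inequality $\norm{f}_{L^6}^3\le C\norm{f}_{L^4}^2\norm{\nabla f}_{L^2}$ in two dimensions, valid for general $f\in H^1$ (scaling check: $1/6=\theta/4+(1-\theta)\cdot0$ with $\theta=2/3$), then applies with $f=\nabla u$ and $\norm{\nabla^2u}_{L^2}=\norm{\Delta u}_{L^2}$. This gives \eqref{eq:nabla-L6} directly, and I would take it as the final route. The main obstacle is justifying this endpoint Gagliardo--Nirenberg inequality. I would prove it by applying Lemma \ref{lem:lady} in its $W^{1,1}\hookrightarrow L^2$ form to $g=|f|^3$:
\[
\norm{f}_{L^6}^3=\norm{g}_{L^2}\le C\norm{\nabla g}_{L^1}\le 3C\norm{|f|^2|\nabla f|}_{L^1}\le 3C\norm{f}_{L^4}^2\norm{\nabla f}_{L^2}.
\]
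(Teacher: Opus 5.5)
Your final argument is correct and is the paper's proof: Ladyzhenskaya applied to $\nabla^2 u$ and to $\Delta u$ with Plancherel gives the first two bounds, and the two-dimensional Gagliardo--Nirenberg inequality $\|f\|_{L^6}^3\le C\|f\|_{L^4}^2\|\nabla f\|_{L^2}$ with $f=\nabla u$ and $\|\nabla^2u\|_{L^2}=\|\Delta u\|_{L^2}$ gives the third. The one addition is your derivation of that inequality by applying $W^{1,1}\hookrightarrow L^2$ to $|f|^3$, which the paper only quotes; it also shows the sphere constraint is never used, so your opening claim that it is needed, and the exploratory detours, can be dropped.
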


\begin{proof}
For \eqref{eq:nabla2-L4}, apply Lemma \ref{lem:lady} to $f=\nabla^2u$ and obtain
\[
\norm{\nabla^2 u}_{L^4_x}^2
\le C\norm{\nabla^2 u}_{L^2_x}\norm{\nabla^3 u}_{L^2_x}.
\]
On the whole space, Plancherel's identity gives
\[
\norm{\nabla^2 u}_{L^2_x}=\norm{\Delta u}_{L^2_x},
\qquad
\norm{\nabla^3 u}_{L^2_x}=\norm{\nabla\Delta u}_{L^2_x},
\]
and hence \eqref{eq:nabla2-L4} follows.

For \eqref{eq:Delta-L4}, apply Lemma \ref{lem:lady} directly to $f=\Delta u$:
\[
\norm{\Delta u}_{L^4_x}^2
\le C\norm{\Delta u}_{L^2_x}\norm{\nabla\Delta u}_{L^2_x}.
\]
This is \eqref{eq:Delta-L4}.

For \eqref{eq:nabla-L6}, first use the two-dimensional Gagliardo--Nirenberg interpolation inequality
\[
\norm{f}_{L^6_x}\le C\norm{f}_{L^4_x}^{2/3}\norm{\nabla f}_{L^2_x}^{1/3},
\qquad f\in H^1(\R^2),
\]
and then set $f=\nabla u$. This gives
\[
\norm{\nabla u}_{L^6_x}^3
\le C\norm{\nabla u}_{L^4_x}^2\norm{\nabla^2 u}_{L^2_x}.
\]
Finally, Plancherel's identity $\norm{\nabla^2 u}_{L^2_x}=\norm{\Delta u}_{L^2_x}$ gives
\[
\norm{\nabla u}_{L^6_x}^3
\le C\norm{\nabla u}_{L^4_x}^2\norm{\Delta u}_{L^2_x},
\]
which is \eqref{eq:nabla-L6}.
\end{proof}

We next give two exact differential expansions that will be used in the higher-order energy estimates.

\begin{lemma}[Differential identities]\label{lem:pointwise-expansion}
Let $u$ be sufficiently smooth. For every $j,k\in\{1,2\}$, one has
\begin{align}
\partial_j\bigl(u\times \partial_j u\bigr)
&=
u\times \partial_{jj}u,
\label{eq:cross-div-exact}
\\
\partial_k\partial_j\bigl(u\times \partial_j u\bigr)
&=
(\partial_k u)\times \partial_{jj}u
+
u\times \partial_k\partial_{jj}u,
\label{eq:cross-diff-exact}
\\
\partial_k\bigl(|\nabla u|^2u\bigr)
&=
2\sum_{m=1}^2
\bigl(\partial_m u\cdot \partial_{km}u\bigr)u
+
|\nabla u|^2\partial_k u.
\label{eq:cubic-diff-exact}
\end{align}
Therefore
\begin{align}
\abs{\partial_k\partial_j(u\times \partial_j u)-u\times \partial_k\partial_{jj}u}
&\le C\abs{\nabla u}\abs{\nabla^2u},
\label{eq:point-cross}
\\
\abs{\nabla\bigl(|\nabla u|^2u\bigr)}
&\le C\abs{\nabla u}\abs{\nabla^2u}+C\abs{\nabla u}^3.
\label{eq:point-cubic}
\end{align}
\end{lemma}

\begin{proof}
The first formula follows directly from the product rule and antisymmetry:
\[
\partial_j(u\times \partial_j u)
=
(\partial_j u)\times(\partial_j u)+u\times \partial_{jj}u
=
u\times \partial_{jj}u.
\]
Applying $\partial_k$ to this formula gives
\[
\partial_k\partial_j(u\times \partial_j u)
=
(\partial_k u)\times \partial_{jj}u+u\times \partial_k\partial_{jj}u,
\]
which is \eqref{eq:cross-diff-exact}.

For \eqref{eq:cubic-diff-exact}, write first
\[
|\nabla u|^2=\sum_{m=1}^2 \partial_m u\cdot \partial_m u.
\]
Then
\[
\partial_k(|\nabla u|^2)
=
2\sum_{m=1}^2 \partial_m u\cdot \partial_{km}u,
\]
and the product rule gives
\[
\partial_k\bigl(|\nabla u|^2u\bigr)
=
\partial_k(|\nabla u|^2)\,u+|\nabla u|^2\partial_k u
=
2\sum_{m=1}^2
\bigl(\partial_m u\cdot \partial_{km}u\bigr)u
+
|\nabla u|^2\partial_k u.
\]

Finally, \eqref{eq:cross-diff-exact} immediately implies
\[
\abs{\partial_k\partial_j(u\times \partial_j u)-u\times \partial_k\partial_{jj}u}
=
\abs{(\partial_k u)\times \partial_{jj}u}
\le C\abs{\nabla u}\abs{\nabla^2u},
\]
which is \eqref{eq:point-cross}. Meanwhile \eqref{eq:cubic-diff-exact} gives
\[
\abs{\partial_k\bigl(|\nabla u|^2u\bigr)}
\le
2\sum_{m=1}^2
\abs{\partial_m u}\,\abs{\partial_{km}u}\,\abs{u}
+
|\nabla u|^2\abs{\partial_k u}.
\]
Since $|u|=1$, summing over $k=1,2$ gives \eqref{eq:point-cubic}.
\end{proof}

\begin{lemma}[Laplacian expansion]\label{lem:lap-expansion}
Let $u$ be sufficiently smooth. Then
\begin{align}
\sum_{j=1}^2\Delta\partial_j\bigl(u\times \partial_j u\bigr)
&=
u\times \Delta^2 u
+
2\sum_{j,k=1}^2
(\partial_k u)\times \partial_{kjj}u,
\label{eq:cross-laplacian-exact}
\\
\Delta\bigl(|\nabla u|^2u\bigr)
&=
|\nabla u|^2\Delta u
+
4\sum_{k,m=1}^2
\bigl(\partial_m u\cdot \partial_{km}u\bigr)\partial_k u
\notag\\
&\quad
+
2\sum_{m=1}^2
\bigl(\partial_m u\cdot \partial_m\Delta u\bigr)u
+
2\sum_{k,m=1}^2
|\partial_{km}u|^2 u.
\label{eq:cubic-laplacian-exact}
\end{align}
\end{lemma}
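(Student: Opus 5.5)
The plan is a direct product-rule computation, done separately for the two identities, using the first-order identities of Lemma \ref{lem:pointwise-expansion} as the starting point.

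\emph{Cross-product term.} By \eqref{eq:cross-div-exact}, $\sum_j\partial_j(u\times\partial_ju)=u\times\Delta u$, so the left side of \eqref{eq:cross-laplacian-exact} is simply $\Delta(u\times\Delta u)$. Apply the Leibniz rule for $\Delta=\sum_k\partial_k\partial_k$ to a bilinear product:
\[
\Delta(f\times g)=(\Delta f)\times g+2\sum_k(\partial_kf)\times(\partial_kg)+f\times\Delta g ,
\]
with $f=u$ and $g=\Delta u$. The three pieces are:
\begin{itemize}
\item the last one, $u\times\Delta^2u$;
\item the middle one, $2\sum_{j,k}(\partial_ku)\times\partial_{kjj}u$;
\item the first one, $\Delta u\times\Delta u=0$, by antisymmetry.
\end{itemize}
This gives \eqref{eq:cross-laplacian-exact} exactly.

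\emph{Cubic term.} Write $w:=|\nabla u|^2$ and expand
\[
\Delta(wu)=(\Delta w)u+2\sum_k(\partial_kw)\partial_ku+w\Delta u .
\]
From the proof of \eqref{eq:cubic-diff-exact}, $\partial_kw=2\sum_m\partial_mu\cdot\partial_{km}u$. The middle term is therefore $4\sum_{k,m}(\partial_mu\cdot\partial_{km}u)\partial_ku$, and the last term is $|\nabla u|^2\Delta u$. For $\Delta w$, differentiate $\partial_kw$ once more in $x_k$ and sum over $k$:
\[
\Delta w=2\sum_{k,m}|\partial_{km}u|^2+2\sum_{k,m}\partial_mu\cdot\partial_{kkm}u .
\]
Since derivatives commute, $\sum_k\partial_{kkm}u=\partial_m\Delta u$. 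Multiplying $\Delta w$ by $u$ then yields the remaining two terms of \eqref{eq:cubic-laplacian-exact}.

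No step is a genuine obstacle; this is bookkeeping. The only points needing care are:
\begin{itemize}
\item the factor $2$ in the cross terms of the Leibniz rule, which produces the coefficients $2$ and $4$;
\item commuting third derivatives (valid for sufficiently smooth $u$);
\item the cancellation $\Delta u\times\Delta u=0$.
\end{itemize}
The constraint $|u|=1$ is not needed for either identity.
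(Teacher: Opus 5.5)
Your proposal is correct and takes essentially the same route as the paper: a direct Leibniz-rule computation from the first-order identities of Lemma \ref{lem:pointwise-expansion}, using the cancellation $\Delta u\times\Delta u=0$ and the commutation $\sum_k\partial_{kkm}u=\partial_m\Delta u$. The differences are only in bookkeeping order. You sum over $j$ first and apply the Laplacian product rule to $u\times\Delta u$ and to $|\nabla u|^2u$ as wholes, whereas the paper applies the product rule termwise in $j$ and differentiates $\partial_k(|\nabla u|^2u)$ once more.
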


\begin{proof}
We first prove \eqref{eq:cross-laplacian-exact}. By \eqref{eq:cross-div-exact},
\[
\partial_j\bigl(u\times \partial_j u\bigr)=u\times \partial_{jj}u.
\]
Apply the Laplacian to this identity and, for each $k\in\{1,2\}$, use the second-order product rule
\[
\partial_{kk}(v\times w)
=
(\partial_{kk}v)\times w
+
2(\partial_k v)\times (\partial_k w)
+
v\times \partial_{kk}w,
\]
with $v=u$ and $w=\partial_{jj}u$. We get
\[
\Delta\partial_j(u\times \partial_j u)
=
\sum_{k=1}^2
\Bigl(
(\partial_{kk}u)\times \partial_{jj}u
+
2(\partial_k u)\times \partial_{kjj}u
+
u\times \partial_{kkjj}u
\Bigr).
\]
Now sum over $j=1,2$. The first class of terms satisfies
\[
\sum_{j,k=1}^2(\partial_{kk}u)\times \partial_{jj}u
=
\Bigl(\sum_{k=1}^2\partial_{kk}u\Bigr)\times
\Bigl(\sum_{j=1}^2\partial_{jj}u\Bigr)
=
\Delta u\times \Delta u
=
0,
\]
whereas the third class becomes
\[
\sum_{j,k=1}^2 u\times \partial_{kkjj}u
=
u\times \Delta^2 u.
\]
Thus \eqref{eq:cross-laplacian-exact} follows.

We next prove \eqref{eq:cubic-laplacian-exact}. By \eqref{eq:cubic-diff-exact},
\[
\partial_k\bigl(|\nabla u|^2u\bigr)
=
2\sum_{m=1}^2
\bigl(\partial_m u\cdot \partial_{km}u\bigr)u
+
|\nabla u|^2\partial_k u.
\]
Differentiating once more in $k$ and summing, we obtain
\begin{align*}
\Delta\bigl(|\nabla u|^2u\bigr)
&=
2\sum_{k,m=1}^2
\partial_k\Bigl[\bigl(\partial_m u\cdot \partial_{km}u\bigr)u\Bigr]
+
\sum_{k=1}^2\partial_k\bigl(|\nabla u|^2\partial_k u\bigr)
\\
&=
2\sum_{k,m=1}^2
\partial_k\bigl(\partial_m u\cdot \partial_{km}u\bigr)u
+
2\sum_{k,m=1}^2
\bigl(\partial_m u\cdot \partial_{km}u\bigr)\partial_k u
\\
&\quad
+
\sum_{k=1}^2\partial_k(|\nabla u|^2)\partial_k u
+
|\nabla u|^2\Delta u.
\end{align*}
Now expand the derivatives in the first two terms:
\[
\partial_k\bigl(\partial_m u\cdot \partial_{km}u\bigr)
=
|\partial_{km}u|^2+\partial_m u\cdot \partial_{kkm}u,
\]
and
\[
\partial_k(|\nabla u|^2)
=
2\sum_{m=1}^2\partial_m u\cdot \partial_{km}u.
\]
Substitution and collection of terms give
\begin{align*}
\Delta\bigl(|\nabla u|^2u\bigr)
&=
2\sum_{k,m=1}^2|\partial_{km}u|^2u
+
2\sum_{m=1}^2
\Bigl(\partial_m u\cdot \sum_{k=1}^2\partial_{kkm}u\Bigr)u
\\
&\quad
+
4\sum_{k,m=1}^2
\bigl(\partial_m u\cdot \partial_{km}u\bigr)\partial_k u
+
|\nabla u|^2\Delta u
\\
&=
2\sum_{k,m=1}^2|\partial_{km}u|^2u
+
2\sum_{m=1}^2
\bigl(\partial_m u\cdot \partial_m\Delta u\bigr)u
\\
&\quad
+
4\sum_{k,m=1}^2
\bigl(\partial_m u\cdot \partial_{km}u\bigr)\partial_k u
+
|\nabla u|^2\Delta u,
\end{align*}
which is \eqref{eq:cubic-laplacian-exact}.
\end{proof}

\begin{lemma}[$H_Q^2$ equivalence]\label{lem:H2-equivalence}
Let $Q\in \Sph$ be fixed and let $v:=u-Q\in H^2(\R^2;\R^3)$. Then
\begin{equation}\label{eq:H2-equiv}
\norm{v}_{H^2}^2
\sim
\norm{v}_{L^2_x}^2+\norm{\nabla v}_{L^2_x}^2+\norm{\Delta v}_{L^2_x}^2
=
\norm{u-Q}_{L^2_x}^2+\norm{\nabla u}_{L^2_x}^2+\norm{\Delta u}_{L^2_x}^2.
\end{equation}
The equivalence constants depend only on the dimension.
\end{lemma}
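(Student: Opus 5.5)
The plan is to work entirely on the Fourier side and use Plancherel's identity. For $v\in H^2(\R^2;\R^3)$ we take the standard norm
\[
\norm{v}_{H^2}^2=\int_{\R^2}(1+|\xi|^2)^2|\hat v(\xi)|^2\,d\xi .
\]
Plancherel gives, componentwise in $\R^3$,
\[
\norm{v}_{L^2_x}^2=\int|\hat v|^2,\qquad \norm{\nabla v}_{L^2_x}^2=\int|\xi|^2|\hat v|^2,\qquad \norm{\Delta v}_{L^2_x}^2=\int|\xi|^4|\hat v|^2 .
\]
The equivalence then reduces to comparing the two Fourier multipliers pointwise in $\xi$. We have
\[
1+|\xi|^2+|\xi|^4\le(1+|\xi|^2)^2\le 2\bigl(1+|\xi|^2+|\xi|^4\bigr),
\]
where the second inequality uses $2|\xi|^2\le 1+|\xi|^4$, or simply the expansion $1+2|\xi|^2+|\xi|^4$. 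Integrating against $|\hat v|^2$ gives the equivalence with constants $1$ and $2$. If one prefers the definition of $H^2$ through all derivatives up to order two, the same argument applies, since $\sum_{|\alpha|\le2}|\xi^\alpha|^2$ is comparable to $(1+|\xi|^2)^2$ with constants depending only on the dimension. In particular, $\norm{\nabla^2 v}_{L^2}=\norm{\Delta v}_{L^2}$ by the identity $\sum_{j,k}\xi_j^2\xi_k^2=|\xi|^4$.

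For the equality on the right-hand side of \eqref{eq:H2-equiv}, note that $Q$ is a constant vector. Hence $\nabla v=\nabla u$ and $\Delta v=\Delta u$ in the distributional sense, while $v=u-Q$ holds by definition. Substituting these identities gives the stated form.

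No step is a genuine obstacle. The only point needing care is to state which norm on $H^2$ is meant and to observe that all constants come from the pointwise multiplier comparison, so they depend only on the dimension.
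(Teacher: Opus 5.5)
Your proof is correct and is essentially the paper's argument: Plancherel, the pointwise comparison $(1+|\xi|^2)^2\sim 1+|\xi|^2+|\xi|^4$, and the observation that $\nabla v=\nabla u$ and $\Delta v=\Delta u$ because $Q$ is constant. Your explicit constants $1$ and $2$, and your remark on defining $H^2$ through all derivatives of order at most two, are harmless refinements of the same route.
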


\begin{proof}
Take the Fourier transform of $v$. Since
\[
(1+|\xi|^2)^2 \sim 1+|\xi|^2+|\xi|^4,
\]
we have
\begin{align*}
\norm{v}_{H^2}^2
&=
\int_{\R^2}(1+|\xi|^2)^2 |\widehat v(\xi)|^2\,d\xi
\\
&\sim
\int_{\R^2}\bigl(1+|\xi|^2+|\xi|^4\bigr)|\widehat v(\xi)|^2\,d\xi
\\
&=
\norm{v}_{L^2_x}^2+\norm{\nabla v}_{L^2_x}^2+\norm{\Delta v}_{L^2_x}^2.
\end{align*}
Since $v=u-Q$ and $Q$ is constant, $\nabla v=\nabla u$ and $\Delta v=\Delta u$, which gives \eqref{eq:H2-equiv}.
\end{proof}

We now turn to the automatic low-frequency boundedness on terminal time windows.

The next lemma is the key that links the ultraviolet tail to the full-frequency $L^4$ quantity: on a finite time window, every fixed low-frequency portion below a cutoff is automatically integrable.

\begin{lemma}[Low-frequency budget]\label{lem:low-freq}
Let $u$ be a strong solution of \eqref{eq:LLG-intro}, let $I=(T_0,T_1)\subset[0,\Tstar)$, and let $k\in\mathbb Z$. Then
\begin{align}
\norm{P_{<k}\nabla u}_{L^4_{t,x}(I\times\R^2)}
&\le C|I|^{1/4}2^{k/2}E(u_0)^{1/2},
\label{eq:low-L4-scalar}
\\
\Biggl(\sum_{j<k}\norm{P_j\nabla u}_{L^4_{t,x}(I\times\R^2)}^2\Biggr)^{1/2}
&\le C|I|^{1/4}2^{k/2}E(u_0)^{1/2}.
\label{eq:low-L4-ell2}
\end{align}
In particular, if $I=\Ik_{k_0}(\Tstar;c_0)$ is a self-similar time window, then
\begin{align}
\norm{P_{<k_0+M}\nabla u}_{L^4_{t,x}(\Ik_{k_0}(\Tstar;c_0)\times\R^2)}
&\le C c_0^{1/4}2^{M/2}E(u_0)^{1/2},
\label{eq:low-self-scalar}
\\
\Biggl(\sum_{j<k_0+M}\norm{P_j\nabla u}_{L^4_{t,x}(\Ik_{k_0}(\Tstar;c_0)\times\R^2)}^2\Biggr)^{1/2}
&\le C c_0^{1/4}2^{M/2}E(u_0)^{1/2}.
\label{eq:low-self-ell2}
\end{align}
\end{lemma}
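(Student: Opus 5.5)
The plan is to argue at each fixed time by Bernstein's inequality and then integrate over $I$, using only the energy law from Lemma~\ref{lem:energy}. For a fixed $t\in I$ and each dyadic $j$, the two-dimensional Bernstein inequality gives
\[
\norm{P_j\nabla u(t)}_{L^4_x}\le C2^{j(2/2-2/4)}\norm{P_j\nabla u(t)}_{L^2_x}=C2^{j/2}\norm{P_j\nabla u(t)}_{L^2_x}.
\]
The same holds for $P_{<k}$, whose symbol is supported in $|\xi|\lesssim 2^k$:
\[
\norm{P_{<k}\nabla u(t)}_{L^4_x}\le C2^{k/2}\norm{\nabla u(t)}_{L^2_x}.
\]
The projections are uniformly bounded on $L^2$. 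Energy monotonicity then gives
\[
\norm{\nabla u(t)}_{L^2_x}^2=2E(u(t))\le 2E(u_0).
\]
Hence
\[
\norm{P_{<k}\nabla u(t)}_{L^4_x}^4\le C2^{2k}E(u_0)^2.
\]
Integrating over $t\in I$ and taking fourth roots yields \eqref{eq:low-L4-scalar}. The strong-solution regularity guarantees $\nabla u(t)\in L^2$ for every $t$, so the quantities are finite and measurable in $t$.

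For the square-function version \eqref{eq:low-L4-ell2}, I would bound each block in spacetime. Integrating the pointwise-in-time Bernstein bound gives
\[
\norm{P_j\nabla u}_{L^4_{t,x}(I\times\R^2)}\le C2^{j/2}|I|^{1/4}\sup_{t\in I}\norm{P_j\nabla u(t)}_{L^2_x}\le C2^{j/2}|I|^{1/4}E(u_0)^{1/2}.
\]
Squaring and summing over $j<k$ gives a geometric series:
\[
\sum_{j<k}2^{j}\le 2^{k}.
\]
This yields the claimed bound $C|I|^{1/4}2^{k/2}E(u_0)^{1/2}$.

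\textbf{Main obstacle.} The only subtlety is this geometric summation. Using the crude per-block energy bound loses no more than a constant because $2^{j}$ is summable as $j\to-\infty$. So I do not need almost-orthogonality across different blocks in the $\sup_t$, and I would not attempt it.

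For the self-similar window statements \eqref{eq:low-self-scalar}--\eqref{eq:low-self-ell2}, I would use the definition of $\Ik_{k_0}(\Tstar;c_0)$, introduced elsewhere in the paper, which I take to be an interval ending at $\Tstar$ of length $|\Ik_{k_0}|\le c_0 2^{-2k_0}$. Substituting $k=k_0+M$ gives
\[
|I|^{1/4}2^{k/2}\le c_0^{1/4}2^{-k_0/2}2^{(k_0+M)/2}=c_0^{1/4}2^{M/2}.
\]
This is exactly the scaling-invariant form claimed. If the window is instead defined with a comparable length, the same computation gives the bound up to an adjusted constant $C$.
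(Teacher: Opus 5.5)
Your proposal is correct and follows essentially the paper's argument: H\"older in time, the two-dimensional $L^2\to L^4$ Bernstein inequality, energy monotonicity, and the geometric sum $\sum_{j<k}2^j\le 2^k$. The paper passes through $\sup_t\sum_{j<k}\|P_j\nabla u(t)\|_{L^2}^2$, whereas you bound each block separately by the full energy; the two routes are equivalent. Your reading of $\Ik_{k_0}(\Tstar;c_0)$ as a window of length at most $c_0 2^{-2k_0}$ is exactly what the paper's computation uses, since the paper never defines this window explicitly.
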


\begin{proof}
We first prove \eqref{eq:low-L4-scalar}. By H\"older's inequality in the time variable,
\[
\norm{P_{<k}\nabla u}_{L^4_{t,x}(I\times\R^2)}
\le |I|^{1/4}\sup_{t\in I}\norm{P_{<k}\nabla u(t)}_{L^4_x}.
\]
The low-frequency Bernstein inequality gives
\[
\norm{P_{<k}\nabla u(t)}_{L^4_x}
\le C2^{k/2}\norm{P_{<k}\nabla u(t)}_{L^2_x}
\le C2^{k/2}\norm{\nabla u(t)}_{L^2_x}.
\]
Combining this with energy monotonicity,
\[
\sup_{t\in I}\norm{\nabla u(t)}_{L^2_x}
\le (2E(u_0))^{1/2},
\]
yields \eqref{eq:low-L4-scalar}.

We next prove \eqref{eq:low-L4-ell2}. For each $j<k$, the same H\"older and Bernstein inequalities give
\[
\norm{P_j\nabla u}_{L^4_{t,x}(I\times\R^2)}^2
\le C|I|^{1/2}2^j\sup_{t\in I}\norm{P_j\nabla u(t)}_{L^2_x}^2.
\]
Therefore
\begin{align*}
\sum_{j<k}\norm{P_j\nabla u}_{L^4_{t,x}(I\times\R^2)}^2
&\le C|I|^{1/2}\sum_{j<k}2^j\sup_{t\in I}\norm{P_j\nabla u(t)}_{L^2_x}^2
\\
&\le C|I|^{1/2}2^k\sup_{t\in I}\sum_{j<k}\norm{P_j\nabla u(t)}_{L^2_x}^2
\\
&\le C|I|^{1/2}2^k E(u_0).
\end{align*}
Taking the square root gives \eqref{eq:low-L4-ell2}.

If $I=\Ik_{k_0}(\Tstar;c_0)$ and $k$ is replaced by $k_0+M$, then
\[
|I|^{1/4}2^{(k_0+M)/2}
\le c_0^{1/4}2^{-k_0/2}2^{k_0/2}2^{M/2}=c_0^{1/4}2^{M/2}.
\]
Substituting this into the preceding estimates yields \eqref{eq:low-self-scalar} and \eqref{eq:low-self-ell2}.
\end{proof}

\begin{remark}
Lemma~\ref{lem:low-freq} says that, on a self-similar time window, the $L^4$ size of the low-frequency part $P_{<k+M}\nabla u$ is completely determined by the energy and the geometric parameters $c_0,M$, independently of the terminal complexity of the solution. Therefore, if $L^4$ control is lost at finite-time blow-up, the only part that can genuinely be responsible is the ultraviolet tail. The frequency decomposition used here is deliberately elementary: only Bernstein inequalities and the energy bound enter.
\end{remark}

\section{\texorpdfstring{$H^2$}{H2} a priori estimates}\label{sec:mainproof}

This section proves the core inequality \eqref{eq:intro-H2-ineq} of the paper. The analytic input needed for it mainly comes from Lemma \ref{lem:derived-GN} and Lemma \ref{lem:pointwise-expansion} in Section \ref{sec:aux}: the former provides the two-dimensional interpolation formulas, and the latter gives the higher-order differential expansions. Together they show that the higher-order energy of LLG is driven only multiplicatively by the critical quantity $\|\nabla u\|_{L^4_x}^4$, which is the fundamental reason why the continuation criteria below hold.

\subsection{\texorpdfstring{$H^2$}{H2} differential inequality}

\begin{proposition}[$H^2$ estimate]\label{prop:H2-main}
Let $u$ be a smooth solution of \eqref{eq:LLG-intro}, and set
\begin{equation}\label{eq:YZ-def}
Y(t):=\norm{\Delta u(t)}_{L^2_x}^2,
\qquad
Z(t):=\norm{\nabla\Delta u(t)}_{L^2_x}^2.
\end{equation}
Then there exists a constant $C_{a,b}>0$, depending only on $a,b$, such that for a.e. $t\in(0,\Tstar)$,
\begin{equation}\label{eq:H2-main}
\frac12\frac{d}{dt}Y(t)+\frac a2 Z(t)
\le C_{a,b}\norm{\nabla u(t)}_{L^4_x}^4 Y(t).
\end{equation}
\end{proposition}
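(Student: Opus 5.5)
We test the equation against $\Delta^2 u$, i.e.\ differentiate $Y$ and integrate by parts once. Using the divergence form \eqref{eq:LLG-div},
\[
\frac12\frac{d}{dt}Y=\int \Delta u\cdot\Delta u_t\,dx
=-\int \nabla\Delta u:\nabla u_t\,dx .
\]
Substituting $\partial_k u_t=a\partial_k\Delta u+b\,\partial_k\partial_j(u\times\partial_ju)+a\,\partial_k(|\nabla u|^2u)$, the leading term gives exactly $-aZ$. The argument then has three terms to handle: the principal dissipative part, the precessional part, and the cubic part.

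\textbf{Precessional term.} For the $b$-term we use \eqref{eq:cross-diff-exact}. The top-order contribution is
\[
-b\int \partial_k\Delta u\cdot (u\times\partial_k\Delta u)\,dx,
\]
which vanishes pointwise since $w\cdot(u\times w)=0$. What remains is
\[
-b\int\partial_k\Delta u\cdot(\partial_ku\times\Delta u)\,dx,
\]
bounded by $|b|\,\|\nabla\Delta u\|_{L^2}\|\nabla u\|_{L^4}\|\Delta u\|_{L^4}$. By \eqref{eq:Delta-L4} this is at most
\[
C|b|\,Z^{3/4}\,\|\nabla u\|_{L^4}\,Y^{1/4}\le \frac a8 Z+C_{a,b}\|\nabla u\|_{L^4}^4Y
\]
by Young with exponents $(4/3,4)$. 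This cancellation of the $u\times\nabla\Delta u$ term is the one structural point; everything else is lower order.

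\textbf{Cubic term.} For the $a$-term we use \eqref{eq:point-cubic}, which bounds it by
\[
a\,Z^{1/2}\bigl(\|\nabla u\|_{L^4}\|\nabla^2u\|_{L^4}+\|\nabla u\|_{L^6}^3\bigr).
\]
The first piece is handled exactly like the $b$-term via \eqref{eq:nabla2-L4}. For the second, \eqref{eq:nabla-L6} gives $\|\nabla u\|_{L^6}^3\le C\|\nabla u\|_{L^4}^2Y^{1/2}$, so
\[
aZ^{1/2}\|\nabla u\|_{L^4}^2Y^{1/2}\le \frac a8Z+C\|\nabla u\|_{L^4}^4Y .
\]
Absorbing the three $\frac a8 Z$ contributions leaves at least $\frac a2 Z$ on the left, which yields \eqref{eq:H2-main}.

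\textbf{Main obstacle.} The expected difficulty is not the algebra but justification. The identity for $\frac{d}{dt}Y$ and the integrations by parts require $u-Q\in H^3$ with $u_t\in H^1$, which holds only for a.e.\ $t$ in the strong class. Since the proposition is stated for smooth solutions, we do the computation there. For strong solutions we would then pass through mollified data or a Lions--Magenes type lemma: $\Delta u\in L^2H^1$ and $\Delta u_t\in L^2H^{-1}$ give absolute continuity of $Y$, so the estimate holds for a.e.\ $t$. Boundary terms at infinity vanish because $\nabla u\in H^2$.
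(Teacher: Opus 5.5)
Your argument is correct, and it reaches \eqref{eq:H2-main} by a shorter route than the paper's. The paper applies $\Delta$ to the divergence form and pairs with $\Delta u$, so two derivatives fall on the nonlinearities. This forces several extra steps:
\begin{itemize}
\item the full expansions of Lemma~\ref{lem:lap-expansion};
\item an additional integration by parts in $I_{1,1}$, with scalar triple-product manipulations, to cancel the top-order part against $I_{1,2}$;
\item a four-term splitting of $I_2$, in which $I_{2,3}$ is rewritten using $u\cdot\Delta u=-|\nabla u|^2$ and the sign-definite term $I_{2,4}=-2\int|\nabla^2u|^2|\nabla u|^2\,dx\le 0$ is discarded.
\end{itemize}
You integrate by parts once at the outset instead. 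You then get the paper's final identity $I_1=-\sum_k\int((\partial_ku)\times\Delta u)\cdot\partial_k\Delta u\,dx$ in one line, from \eqref{eq:cross-diff-exact} and the pointwise vanishing of $\partial_k\Delta u\cdot(u\times\partial_k\Delta u)$. You also treat the damping nonlinearity with nothing more than the pointwise bound \eqref{eq:point-cubic}, using none of its geometric structure. The interpolation inputs \eqref{eq:nabla2-L4}--\eqref{eq:nabla-L6} and the Young absorptions are the same in both proofs. The paper's expansion does expose an extra good-sign term, but it is not needed for \eqref{eq:H2-main}.

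Your version has a further advantage: every integral involves at most $\nabla\Delta u$ and $\nabla u_t$. The computation is therefore meaningful at a.e.\ $t$ for strong solutions ($u-Q\in H^3$, $u_t\in H^1$). Your Lions--Magenes remark, applied to $w=\Delta u\in L^2_tH^1_x$ with $w_t=\Delta u_t\in L^2_tH^{-1}_x$, would then give \eqref{eq:strong-H2-integral} directly. The paper's computation contains $u\times\Delta^2u$, so it is carried out for smooth solutions and transferred through the parabolic smoothing invoked in Lemma~\ref{lem:strong-H2-transfer}.
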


\begin{proof}
Write the divergence form \eqref{eq:LLG-div} as
\[
\partial_t u-a\Delta u
=
b\,\partial_j(u\times \partial_j u)+a|\nabla u|^2u.
\]
Apply $\Delta$ to both sides and take the $L^2$ pairing with $\Delta u$. Since
\begin{align}
\int_{\R^2}\Delta u\cdot \Delta \partial_t u\,dx
&=
\frac12\frac{d}{dt}\norm{\Delta u}_{L^2_x}^2
=
\frac12\frac{d}{dt}Y(t),
\notag
\\
-a\int_{\R^2}\Delta u\cdot \Delta^2u\,dx
&=
a\int_{\R^2}|\nabla\Delta u|^2\,dx
=
aZ(t),
\notag
\end{align}
we obtain
\begin{equation}\label{eq:start-H2}
\frac12\frac{d}{dt}Y(t)+aZ(t)=bI_1(t)+aI_2(t),
\end{equation}
where
\begin{align}
I_1(t)&:=\int_{\R^2}\Delta\partial_j(u\times \partial_j u)\cdot \Delta u\,dx,
\notag
\\
I_2(t)&:=\int_{\R^2}\Delta\bigl(|\nabla u|^2u\bigr)\cdot \Delta u\,dx.
\notag
\end{align}

\medskip
\noindent Step 1: exact simplification of the precession term $I_1$.

By \eqref{eq:cross-laplacian-exact} in Lemma \ref{lem:lap-expansion},
\[
\Delta\partial_j(u\times \partial_j u)
=
u\times \Delta^2 u
+
2\sum_{k=1}^2
(\partial_k u)\times \partial_k\Delta u.
\]
Therefore
\begin{equation}\label{eq:I1-first-split}
I_1(t)=I_{1,1}(t)+2I_{1,2}(t),
\end{equation}
where
\begin{align}
I_{1,1}(t)
&:=
\int_{\R^2}
\bigl(u\times \Delta^2 u\bigr)\cdot \Delta u\,dx,
\notag
\\
I_{1,2}(t)
&:=
\sum_{k=1}^2
\int_{\R^2}
\bigl((\partial_k u)\times \partial_k\Delta u\bigr)\cdot \Delta u\,dx.
\label{eq:I12-def}
\end{align}
For $I_{1,1}$, first use the scalar triple product identity
\[
(a\times b)\cdot c =-(a\times c)\cdot b
\]
to rewrite the integrand as
\[
\bigl(u\times \Delta^2u\bigr)\cdot \Delta u
=
-\bigl(u\times \Delta u\bigr)\cdot \Delta^2u.
\]
Then integrate by parts in $x_k$ to get
\begin{align}
I_{1,1}(t)
&=
-\int_{\R^2}
\bigl(u\times \Delta u\bigr)\cdot \Delta^2u\,dx
\notag\\
&=
\sum_{k=1}^2
\int_{\R^2}
\partial_k\bigl(u\times \Delta u\bigr)\cdot \partial_k\Delta u\,dx
\notag\\
&=
\sum_{k=1}^2
\int_{\R^2}
\bigl((\partial_k u)\times \Delta u\bigr)\cdot \partial_k\Delta u\,dx
+
\sum_{k=1}^2
\int_{\R^2}
\bigl(u\times \partial_k\Delta u\bigr)\cdot \partial_k\Delta u\,dx
\notag\\
&=
\sum_{k=1}^2
\int_{\R^2}
\bigl((\partial_k u)\times \Delta u\bigr)\cdot \partial_k\Delta u\,dx,
\label{eq:I11-after-parts}
\end{align}
because
\[
\bigl(u\times \partial_k\Delta u\bigr)\cdot \partial_k\Delta u=0.
\]
On the other hand, the same scalar triple product identity rewrites \eqref{eq:I12-def} as
\begin{equation}\label{eq:I12-triple}
I_{1,2}(t)
=
-\sum_{k=1}^2
\int_{\R^2}
\bigl((\partial_k u)\times \Delta u\bigr)\cdot \partial_k\Delta u\,dx.
\end{equation}
Substituting \eqref{eq:I11-after-parts} and \eqref{eq:I12-triple} back into \eqref{eq:I1-first-split} gives the exact identity
\begin{equation*}
I_1(t)
=
-\sum_{k=1}^2
\int_{\R^2}
\bigl((\partial_k u)\times \Delta u\bigr)\cdot \partial_k\Delta u\,dx.
\end{equation*}
Equivalently, this step may be written as the following pointwise divergence identity:
\begin{equation*}
\Bigl[\sum_{j=1}^2\Delta\partial_j\bigl(u\times \partial_j u\bigr)\Bigr]\cdot \Delta u
+
\sum_{k=1}^2
\bigl((\partial_k u)\times \Delta u\bigr)\cdot \partial_k\Delta u
=
\sum_{k=1}^2
\partial_k\Bigl[\bigl(u\times \partial_k\Delta u\bigr)\cdot \Delta u\Bigr].
\end{equation*}
Thus
\begin{equation*}
|I_1(t)|
\le
C\int_{\R^2}
|\nabla u|\,|\Delta u|\,|\nabla\Delta u|\,dx.
\end{equation*}
By H\"older's inequality and \eqref{eq:Delta-L4},
\begin{align}
|I_1(t)|
&\le
C\norm{\nabla u(t)}_{L^4_x}
\norm{\Delta u(t)}_{L^4_x}
\norm{\nabla\Delta u(t)}_{L^2_x}
\notag\\
&\le
C\norm{\nabla u(t)}_{L^4_x}
\bigl(Y(t)^{1/4}Z(t)^{1/4}\bigr)
Z(t)^{1/2}
\notag\\
&=
C\norm{\nabla u(t)}_{L^4_x}Y(t)^{1/4}Z(t)^{3/4}.
\notag
\end{align}
We apply Young's inequality to the last term. Let
\[
A:=Z(t)^{3/4},
\qquad
B:=\norm{\nabla u(t)}_{L^4_x}Y(t)^{1/4}.
\]
Then $AB\le \varepsilon A^{4/3}+C_\varepsilon B^4$, namely
\[
\norm{\nabla u}_{L^4_x}Y^{1/4}Z^{3/4}
\le
\varepsilon Z
+
C_\varepsilon \norm{\nabla u}_{L^4_x}^4Y.
\]
Choosing
\[
\varepsilon=\frac{a}{8(1+|b|)},
\]
and absorbing constants into $C_{a,b}$, we obtain
\begin{equation}\label{eq:I1-final}
|bI_1(t)|
\le
\frac a8 Z(t)
+
C_{a,b}\norm{\nabla u(t)}_{L^4_x}^4Y(t).
\end{equation}

\medskip
\noindent Step 2: term-by-term expansion of the damping nonlinearity $I_2$.

By \eqref{eq:cubic-laplacian-exact} in Lemma \ref{lem:lap-expansion},
\[
I_2(t)=I_{2,1}(t)+I_{2,2}(t)+I_{2,3}(t)+I_{2,4}(t),
\]
where
\begin{align}
I_{2,1}(t)
&:=
\int_{\R^2}
|\nabla u|^2|\Delta u|^2\,dx,
\notag
\\
I_{2,2}(t)
&:=
4\sum_{k,m=1}^2
\int_{\R^2}
\bigl(\partial_m u\cdot \partial_{km}u\bigr)
\bigl(\partial_k u\cdot \Delta u\bigr)\,dx,
\notag
\\
I_{2,3}(t)
&:=
2\sum_{m=1}^2
\int_{\R^2}
\bigl(\partial_m u\cdot \partial_m\Delta u\bigr)
\bigl(u\cdot \Delta u\bigr)\,dx,
\notag
\\
I_{2,4}(t)
&:=
2\sum_{k,m=1}^2
\int_{\R^2}
|\partial_{km}u|^2
\bigl(u\cdot \Delta u\bigr)\,dx.
\notag
\end{align}
By the constraint identity \eqref{eq:uDelta},
\begin{equation}\label{eq:I24-sign}
I_{2,4}(t)
=
-2\sum_{k,m=1}^2
\int_{\R^2}
|\partial_{km}u|^2|\nabla u|^2\,dx
\le 0.
\end{equation}

For $I_{2,1}$, H\"older's inequality and \eqref{eq:Delta-L4} give
\begin{align}
|I_{2,1}(t)|
&\le
\norm{\nabla u(t)}_{L^4_x}^2
\norm{\Delta u(t)}_{L^4_x}^2
\notag\\
&\le
C\norm{\nabla u(t)}_{L^4_x}^2
Y(t)^{1/2}Z(t)^{1/2}
\notag\\
&\le
\frac18 Z(t)+C\norm{\nabla u(t)}_{L^4_x}^4Y(t).
\label{eq:I21-bound}
\end{align}

For $I_{2,2}$, using H\"older's inequality, \eqref{eq:nabla2-L4}, and \eqref{eq:Delta-L4}, we get
\begin{align}
|I_{2,2}(t)|
&\le
C
\norm{\nabla u(t)}_{L^4_x}^2
\norm{\nabla^2u(t)}_{L^4_x}
\norm{\Delta u(t)}_{L^4_x}
\notag\\
&\le
C
\norm{\nabla u(t)}_{L^4_x}^2
\bigl(Y(t)^{1/4}Z(t)^{1/4}\bigr)
\bigl(Y(t)^{1/4}Z(t)^{1/4}\bigr)
\notag\\
&=
C\norm{\nabla u(t)}_{L^4_x}^2
Y(t)^{1/2}Z(t)^{1/2}
\notag\\
&\le
\frac18 Z(t)+C\norm{\nabla u(t)}_{L^4_x}^4Y(t).
\label{eq:I22-bound}
\end{align}

For $I_{2,3}$, first use \eqref{eq:uDelta} to write
\[
I_{2,3}(t)
=
-2\sum_{m=1}^2
\int_{\R^2}
\bigl(\partial_m u\cdot \partial_m\Delta u\bigr)
|\nabla u|^2\,dx.
\]
Hence, by H\"older's inequality and \eqref{eq:nabla-L6},
\begin{align}
|I_{2,3}(t)|
&\le
C
\norm{\nabla\Delta u(t)}_{L^2_x}
\norm{\nabla u(t)}_{L^6_x}^3
\notag\\
&\le
C
Z(t)^{1/2}
\norm{\nabla u(t)}_{L^4_x}^2
Y(t)^{1/2}
\notag\\
&\le
\frac18 Z(t)+C\norm{\nabla u(t)}_{L^4_x}^4Y(t).
\label{eq:I23-bound}
\end{align}

The fourth term can be discarded directly by \eqref{eq:I24-sign}. Combining \eqref{eq:I21-bound}, \eqref{eq:I22-bound}, \eqref{eq:I23-bound}, and \eqref{eq:I24-sign}, we find
\begin{equation}\label{eq:I2-final}
aI_2(t)
\le
\frac{3a}{8}Z(t)
+
C_{a,b}\norm{\nabla u(t)}_{L^4_x}^4Y(t).
\end{equation}

\medskip
\noindent Step 3: absorption of the dissipation term.

Substituting \eqref{eq:I1-final} and \eqref{eq:I2-final} into \eqref{eq:start-H2}, we obtain
\[
\frac12\frac{d}{dt}Y(t)+aZ(t)
\le
\frac a8 Z(t)+\frac{3a}{8}Z(t)
+
C_{a,b}\norm{\nabla u(t)}_{L^4_x}^4Y(t).
\]
Moving the $Z(t)$ terms on the right-hand side to the left gives
\[
\frac12\frac{d}{dt}Y(t)+\frac a2 Z(t)
\le C_{a,b}\norm{\nabla u(t)}_{L^4_x}^4Y(t),
\]
which is \eqref{eq:H2-main}.
\end{proof}

\begin{lemma}[Smoothing transfer]\label{lem:strong-H2-transfer}
Let $u$ be a strong solution of \eqref{eq:LLG-intro} on $[0,\Tstar)$, and let $0\le T_0<T_1<\Tstar$. Then for every $\varepsilon\in(0,T_1-T_0)$, $u$ is smooth on the strip
\[
[T_0+\varepsilon,T_1]\times\R^2.
\]
In particular, if $Y,Z$ are defined by \eqref{eq:YZ-def}, then for every $t\in[T_0,T_1]$,
\begin{equation}\label{eq:strong-H2-integral}
Y(t)+a\int_{T_0}^{t} Z(s)\,ds
\le
Y(T_0)+C_{a,b}\int_{T_0}^{t}\norm{\nabla u(s)}_{L^4_x}^4Y(s)\,ds.
\end{equation}
\end{lemma}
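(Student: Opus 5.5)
The plan has two parts: first show that the solution is smooth for positive times, then integrate the inequality of Proposition~\ref{prop:H2-main} and pass to the limit down to $T_0$, where only $H^2$ regularity is available.

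\textbf{Smoothness.} We use parabolic bootstrapping on the divergence form \eqref{eq:LLG-div}, treating the precession term as part of the principal operator. Since $|u|=1$, the equation can be written as
\[
\partial_t u = aA(u)\Delta u + \text{lower order},\qquad A(u)v:=v+\tfrac{b}{a}\,u\times v .
\]
Here $A(u)$ is a uniformly elliptic matrix in the Legendre sense, because $(u\times v)\cdot v=0$. On $[T_0,T_1]$ we have $u-Q\in C(H^2)\cap L^2(H^3)$, so $u\in C^{0,\alpha}$ in space–time and $\nabla u\in L^\infty_tL^p_x$ for every $p<\infty$.

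Localize in time with a cutoff $\eta(t)$ that vanishes near $T_0$. Difference quotients, or equivalently the $H^{m}$ energy estimates obtained by applying $\partial^\alpha$ with $|\alpha|=m$ and pairing, then give $u-Q\in L^\infty H^{m}\cap L^2H^{m+1}$ on $[T_0+\varepsilon/2^{m},T_1]$ inductively in $m$. The top-order precession contribution cancels exactly as in Step~1 of Proposition~\ref{prop:H2-main}, via $(u\times\partial^\alpha\nabla u)\cdot\partial^\alpha\nabla u=0$. The commutator terms are handled with Gagliardo--Nirenberg, using the already established $H^{m}$ bound.

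Time regularity follows from the equation. Sobolev embedding then gives smoothness on $[T_0+\varepsilon,T_1]\times\R^2$, with all derivatives bounded and in $L^2_x$. This is exactly the regularity needed to justify every pairing and integration by parts in Proposition~\ref{prop:H2-main}.

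\textbf{Integral inequality.} Fix $s\in(T_0,T_1)$. Integrating \eqref{eq:H2-main} on $[s,t]$ and multiplying by $2$ gives
\[
Y(t)+a\int_s^t Z\le Y(s)+2C_{a,b}\int_s^t\|\nabla u\|_{L^4_x}^4Y .
\]
We renormalize the constant, which is harmless, and then let $s\downarrow T_0$:
\begin{itemize}
\item $Y(s)\to Y(T_0)$ because $u-Q\in C([0,\Tstar);H^2)$.
\item The right-hand integrand $\|\nabla u\|_{L^4}^4Y\le Y^2$ is bounded on $[T_0,T_1]$ by Lemma~\ref{lem:L4-by-Lap}, so dominated convergence applies.
\item The left-hand term $\int_s^tZ$ increases to $\int_{T_0}^tZ$ by monotone convergence; this integral is finite since $u\in L^2H^3$.
\end{itemize}
The endpoint $t=T_0$ is trivial, which yields \eqref{eq:strong-H2-integral}.

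\textbf{Main obstacle.} The delicate step is the smoothing bootstrap. The principal part $a\Delta u+b\,u\times\Delta u$ is quasilinear with only Hölder-continuous coefficients at the start. I would first try to avoid full smoothness. One option is to mollify: approximate $u(T_0+\delta)$ in $H^2$ by smooth data, use local well-posedness and the continuous dependence of $H^2$ strong solutions, together with persistence of regularity, to get smooth approximate solutions. Then run Proposition~\ref{prop:H2-main} on them and pass to the limit using lower semicontinuity of $\int Z$. If smoothness itself is required, as the statement asserts, the key estimate is the $H^3$ one. It closes by the same cancellation as in Step~1, and the higher levels are analogous.
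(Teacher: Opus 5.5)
Your proposal is correct and follows essentially the same route as the paper. The paper also appeals to standard quasilinear parabolic smoothing on $[T_0+\varepsilon,T_1]\times\R^2$ and integrates Proposition~\ref{prop:H2-main} there, absorbing the factor $2$ into $C_{a,b}$. It then lets $\varepsilon\downarrow0$ using $H^2$-continuity of $Y$, the bound $\|\nabla u\|_{L^4_x}^4Y\le Y^2$ from Lemma~\ref{lem:L4-by-Lap}, and Fatou's lemma on the left where you use monotone convergence. Your sketch of the bootstrap, with the top-order cancellation $(u\times\partial^\alpha\nabla u)\cdot\partial^\alpha\nabla u=0$, supplies detail the paper leaves to the literature. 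One bookkeeping fix: the nested intervals should shrink toward $[T_0+\varepsilon,T_1]$, e.g.\ $[T_0+\varepsilon(1-2^{-m}),T_1]$, not grow back toward $T_0$ as $[T_0+\varepsilon/2^m,T_1]$ does.
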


\begin{proof}
By standard quasilinear parabolic regularization, for every fixed $\varepsilon>0$, the strong solution $u$ is smooth on
\[
[T_0+\varepsilon,T_1]\times\R^2.
\]
Therefore Proposition \ref{prop:H2-main} may be applied pointwise on this strip. For every $t\in[T_0+\varepsilon,T_1]$, integrating \eqref{eq:H2-main} over $[T_0+\varepsilon,t]$ yields
\[
Y(t)+a\int_{T_0+\varepsilon}^{t} Z(s)\,ds
\le
Y(T_0+\varepsilon)+C_{a,b}\int_{T_0+\varepsilon}^{t}\norm{\nabla u(s)}_{L^4_x}^4Y(s)\,ds.
\]
By the strong-solution continuity $u-Q\in C([T_0,T_1];H^2)$,
\[
Y(T_0+\varepsilon)\to Y(T_0)\qquad(\varepsilon\downarrow0).
\]
On the other hand, by Lemma \ref{lem:L4-by-Lap},
\[
\norm{\nabla u(s)}_{L^4_x}^4Y(s)
\le Y(s)^2,
\]
and $Y$ is continuous on $[T_0,T_1]$, so the integrand on the right belongs to $L^1(T_0,T_1)$. Letting $\varepsilon\downarrow0$ and using Fatou's lemma for the nonnegative integral term on the left gives \eqref{eq:strong-H2-integral}.
\end{proof}

\subsection{Gronwall consequences}

\begin{corollary}[Gronwall bound]\label{cor:Gronwall}
Let $u$ be a maximal strong solution of \eqref{eq:LLG-intro}. If on some interval $I=(T_0,T_1)\subset[0,\Tstar)$,
\begin{equation}\label{eq:M-def}
M_I^4:=\int_{T_0}^{T_1}\norm{\nabla u(t)}_{L^4_x}^4\,dt<\infty,
\end{equation}
then
\begin{equation}\label{eq:H2-bound-I}
\sup_{t\in[T_0,T_1]}\norm{\Delta u(t)}_{L^2_x}^2
\le \norm{\Delta u(T_0)}_{L^2_x}^2\exp\bigl(C_{a,b}M_I^4\bigr),
\end{equation}
and
\begin{equation}\label{eq:H3-time-int}
\int_{T_0}^{T_1}\norm{\nabla\Delta u(t)}_{L^2_x}^2\,dt
\le C_{a,b}\norm{\Delta u(T_0)}_{L^2_x}^2\exp\bigl(C_{a,b}M_I^4\bigr).
\end{equation}
Consequently, together with Lemma~\ref{lem:L2-relative}, one has
\begin{equation*}
\sup_{t\in[T_0,T_1]}\norm{u(t)-Q}_{H^2}
\le C\Bigl(
\norm{u(T_0)-Q}_{H^2},\,E(u_0),\,a,\,b,\,M_I,\,T_1-T_0\Bigr).
\end{equation*}
\end{corollary}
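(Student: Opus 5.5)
The plan is to apply Gronwall's inequality directly to the integral form \eqref{eq:strong-H2-integral} from Lemma~\ref{lem:strong-H2-transfer}, with the weight $g(t):=C_{a,b}\norm{\nabla u(t)}_{L^4_x}^4$. By the assumption \eqref{eq:M-def}, $g\in L^1(T_0,T_1)$. We also know from Lemma~\ref{lem:strong-H2-transfer} and Lemma~\ref{lem:L4-by-Lap} that $gY\in L^1$ and $Y$ is continuous on $[T_0,T_1]$, since $[T_0,T_1]$ is a compact subinterval of $[0,\Tstar)$.

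First, drop the nonnegative dissipation term in \eqref{eq:strong-H2-integral}. This gives
\[
Y(t)\le Y(T_0)+\int_{T_0}^t g(s)Y(s)\,ds,
\qquad t\in[T_0,T_1].
\]
The integral form of Gronwall's lemma, valid for continuous $Y$ and $L^1$ weight $g\ge0$, then yields
\[
Y(t)\le Y(T_0)\exp\Bigl(\int_{T_0}^t g\Bigr)\le Y(T_0)\exp\bigl(C_{a,b}M_I^4\bigr).
\]
This is \eqref{eq:H2-bound-I}. To justify the Gronwall step, set $R(t):=Y(T_0)+\int_{T_0}^t gY$, which is absolutely continuous, note $R'\le gR$ a.e., and integrate the quantity $R\exp(-\int_{T_0}^t g)$.

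Next, keep the dissipation term. Inserting the sup bound just obtained into the right-hand side of \eqref{eq:strong-H2-integral} at $t=T_1$ gives
\[
a\int_{T_0}^{T_1}Z\le Y(T_0)\bigl(1+C_{a,b}M_I^4e^{C_{a,b}M_I^4}\bigr).
\]
Since $x\le e^{x}$, this is at most $Y(T_0)\,e^{2C_{a,b}M_I^4}$. After dividing by $a$ and relabeling $C_{a,b}$, this is \eqref{eq:H3-time-int}.

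Finally, assemble the full $H^2$ norm via Lemma~\ref{lem:H2-equivalence}, which bounds it by three pieces.
\begin{itemize}
\item The $\norm{\Delta u}_{L^2}^2$ piece is controlled by the bound just proved, together with $Y(T_0)\lesssim\norm{u(T_0)-Q}_{H^2}^2$.
\item The $\norm{\nabla u}_{L^2}^2=2E(u(t))$ piece is at most $2E(u_0)$ by Lemma~\ref{lem:energy}.
\item The $\norm{u-Q}_{L^2}^2$ piece comes from integrating \eqref{eq:L2-growth} on $[T_0,t]$, giving $\norm{u(t)-Q}_{L^2}^2\le\norm{u(T_0)-Q}_{L^2}^2+4aE(u_0)(T_1-T_0)$.
\end{itemize}
Summing these gives the stated dependence on $\norm{u(T_0)-Q}_{H^2}$, $E(u_0)$, $a$, $b$, $M_I$ and $T_1-T_0$.

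The only delicate point is the regularity needed for Gronwall. This is already handled by working with the integral inequality rather than the differential one, and by the continuity of $Y$ on the closed interval. I therefore expect no real obstacle beyond bookkeeping of constants.
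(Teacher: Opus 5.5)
Your proposal is correct and follows the paper's proof step for step. You apply Gronwall to the integral inequality \eqref{eq:strong-H2-integral}, reinsert the resulting sup bound at $t=T_1$ to control $\int Z$, and assemble the $H^2$ norm from Lemma~\ref{lem:L2-relative}, energy monotonicity and Lemma~\ref{lem:H2-equivalence}. Your explicit Gronwall justification via the absolutely continuous majorant $R$, and your note that the exponent constant in \eqref{eq:H3-time-int} must be relabeled (since $1+xe^{x}$ is not $O(e^{x})$), are slightly more careful than the paper's write-up but do not change the argument.
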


\begin{proof}
By Lemma \ref{lem:strong-H2-transfer}, for every $t\in[T_0,T_1]$,
\[
Y(t)
\le
Y(T_0)+C_{a,b}\int_{T_0}^{t}\norm{\nabla u(s)}_{L^4_x}^4Y(s)\,ds.
\]
Thus Gronwall's inequality gives
\[
Y(t)\le Y(T_0)\exp\!\Bigl(C_{a,b}\int_{T_0}^{t}\norm{\nabla u(s)}_{L^4_x}^4\,ds\Bigr)
\le Y(T_0)\exp\!\bigl(C_{a,b}M_I^4\bigr),
\]
which proves \eqref{eq:H2-bound-I}.

Substituting this uniform bound back into \eqref{eq:strong-H2-integral} and taking $t=T_1$, we obtain
\[
a\int_{T_0}^{T_1}Z(t)\,dt
\le
Y(T_0)+C_{a,b}\sup_{t\in[T_0,T_1]}Y(t)
\int_{T_0}^{T_1}\norm{\nabla u(t)}_{L^4_x}^4\,dt.
\]
By \eqref{eq:H2-bound-I} and \eqref{eq:M-def},
\[
\int_{T_0}^{T_1}Z(t)\,dt
\le
C_{a,b}Y(T_0)e^{C_{a,b}M_I^4},
\]
which is \eqref{eq:H3-time-int}.

Finally, the integrated form of Lemma~\ref{lem:L2-relative} on $[T_0,T_1]$ gives
\[
\norm{u(t)-Q}_{L^2_x}^2
\le
\norm{u(T_0)-Q}_{L^2_x}^2+4aE(u_0)(t-T_0)
\le
\norm{u(T_0)-Q}_{L^2_x}^2+4aE(u_0)(T_1-T_0).
\]
Together with energy monotonicity,
\[
\norm{\nabla u(t)}_{L^2_x}^2\le 2E(u_0),
\]
and Lemma \ref{lem:H2-equivalence},
\[
\norm{u(t)-Q}_{H^2}
\le
C\Bigl(
\norm{u(t)-Q}_{L^2_x}+\norm{\nabla u(t)}_{L^2_x}+\norm{\Delta u(t)}_{L^2_x}
\Bigr),
\]
we get
\[
\sup_{t\in[T_0,T_1]}\norm{u(t)-Q}_{H^2}
\le C\Bigl(
\norm{u(T_0)-Q}_{H^2},\,E(u_0),\,a,\,b,\,M_I,\,T_1-T_0
\Bigr).
\]
\end{proof}

\begin{remark}
Corollary \ref{cor:Gronwall} is the fundamental reason for choosing $L^4_{t,x}$ as the controlling norm in this paper. On the one hand, the $L^4_{t,x}$ norm of $\nabla u$ is invariant under the two-dimensional critical scaling. On the other hand, Proposition \ref{prop:H2-main} writes the higher-order energy exactly in a Gronwall-type structure of the form ``critical quantity times higher-order quantity.'' This combination matches the scale of two-dimensional energy-critical geometric flows.
\end{remark}

\section{Continuation criteria and subthreshold global existence}\label{sec:proofs}

This section proves the continuation criteria and blow-up consequences stated in Section 1 one by one. Section \ref{sec:mainproof} has already provided all the genuine a priori estimates; from this point on, we only need to splice these a priori estimates with the standard local strong-solution theory.

In the proofs below, we repeatedly use the following quantitative consequence of the local theory: for every $R>0$, there exist
\[
\delta_{\mathrm{loc}}(R)>0,
\qquad
C_{\mathrm{loc}}(R)>0,
\]
such that whenever at some time $t_0$ one has
\[
\|u(t_0)-Q\|_{H^2}\le R,
\]
the strong solution with initial value $u(t_0)$ exists at least on the interval $[t_0,t_0+\delta_{\mathrm{loc}}(R)]$ and satisfies
\[
\sup_{t\in[t_0,t_0+\delta_{\mathrm{loc}}(R)]}\|u(t)-Q\|_{H^2}\le C_{\mathrm{loc}}(R).
\]
This is a standard consequence of the local theory for quasilinear parabolic equations; see \cite{CarbouFabrie,Melcher,GuoDingBook,WangGuo}.

\subsection{Finite-terminal full-frequency \texorpdfstring{$L^4$}{L4} continuation criterion}

This subsection proves Theorem \ref{thm:L4-cont}. Section \ref{sec:mainproof} has already supplied all the genuine a priori estimates; what remains here is only to splice these a priori estimates with the standard local strong-solution theory.

In the proof below, we repeatedly use the following quantitative consequence of the local theory: for every $R>0$, there exist
\[
\delta_{\mathrm{loc}}(R)>0,
\qquad
C_{\mathrm{loc}}(R)>0,
\]
such that whenever at some time $t_0$ one has
\[
\|u(t_0)-Q\|_{H^2}\le R,
\]
the strong solution with initial value $u(t_0)$ exists at least on the interval $[t_0,t_0+\delta_{\mathrm{loc}}(R)]$ and satisfies
\[
\sup_{t\in[t_0,t_0+\delta_{\mathrm{loc}}(R)]}\|u(t)-Q\|_{H^2}\le C_{\mathrm{loc}}(R).
\]
This is a standard consequence of the local theory for quasilinear parabolic equations; see \cite{CarbouFabrie,Melcher,GuoDingBook,WangGuo}.

\begin{proof}[Proof of Theorem \ref{thm:L4-cont}]
Assume that $0<\Tstar<\infty$ and \eqref{eq:L4-ass} holds. Set
\[
M_\ast^4:=\int_{T_0}^{\Tstar}\norm{\nabla u(t)}_{L^4_x}^4\,dt<\infty.
\]
For any $t\in(T_0,\Tstar)$, apply Corollary \ref{cor:Gronwall} to the interval $I=(T_0,t)$ and obtain
\begin{equation*}
\sup_{T_0\le s\le t}\norm{u(s)-Q}_{H^2}
\le
C\Bigl(
\norm{u(T_0)-Q}_{H^2},\,E(u_0),\,a,\,b,\,M_\ast,\,\Tstar-T_0
\Bigr).
\end{equation*}
The right-hand side is independent of $t$, and therefore there exists a constant $R_\ast<\infty$ such that
\begin{equation}\label{eq:global-H2-bound-before-T}
\sup_{T_0\le s<\Tstar}\norm{u(s)-Q}_{H^2}\le R_\ast.
\end{equation}

We now invoke the quantitative local theory written above. Put $R=R_\ast$ and obtain a positive number depending only on $R_\ast$:
\[
\delta_\ast:=\delta_{\mathrm{loc}}(R_\ast)>0.
\]
Take any sequence $t_n\uparrow \Tstar$, and choose $n$ so large that
\[
\Tstar-t_n<\frac{\delta_\ast}{2}.
\]
By \eqref{eq:global-H2-bound-before-T},
\[
\|u(t_n)-Q\|_{H^2}\le R_\ast.
\]
Therefore the strong solution with initial value $u(t_n)$ exists at least on
\[
[t_n,\,t_n+\delta_\ast].
\]
By uniqueness of strong solutions, this new solution must coincide with the original maximal solution on the overlapping interval $[t_n,\Tstar)$. Hence the original solution can be continued to
\[
[t_n,\,t_n+\delta_\ast]\supset [t_n,\,\Tstar+\delta_\ast/2].
\]
This says that the original maximal solution actually extends beyond $\Tstar$, contradicting the maximality of $\Tstar$. Thus the solution can be continued beyond $t=\Tstar$.
\end{proof}

\subsection{Single ultraviolet-tail continuation criterion}

This subsection proves Theorem \ref{thm:uv-tail-cont}. The key point is the low-frequency budget from Section~\ref{sec:prelim}: once a fixed high-frequency tail belongs to $L^4_{t,x}$ on the terminal interval, the remaining low-frequency part is automatically supplied by the energy and the interval length.

\begin{proof}[Proof of Theorem \ref{thm:uv-tail-cont}]
Let $I=(T_0,\Tstar)$ and assume that \eqref{eq:uv-tail-ass} holds. Decompose the gradient as
\[
\nabla u=P_{<k_\ast}\nabla u+P_{\ge k_\ast}\nabla u.
\]
By the triangle inequality,
\begin{equation}\label{eq:uv-tail-split}
\norm{\nabla u}_{L^4_{t,x}(I\times\R^2)}
\le
\norm{P_{<k_\ast}\nabla u}_{L^4_{t,x}(I\times\R^2)}
+
\norm{P_{\ge k_\ast}\nabla u}_{L^4_{t,x}(I\times\R^2)}.
\end{equation}
The second term is exactly \eqref{eq:uv-tail-ass}. For the first term, \eqref{eq:low-L4-scalar} in Lemma~\ref{lem:low-freq} gives
\begin{equation}\label{eq:uv-low-budget}
\norm{P_{<k_\ast}\nabla u}_{L^4_{t,x}(I\times\R^2)}
\le C(\Tstar-T_0)^{1/4}2^{k_\ast/2}E(u_0)^{1/2}<\infty.
\end{equation}
Putting \eqref{eq:uv-tail-ass} and \eqref{eq:uv-low-budget} into \eqref{eq:uv-tail-split}, we obtain
\[
\norm{\nabla u}_{L^4_{t,x}(I\times\R^2)}<\infty.
\]
Theorem \ref{thm:L4-cont} then implies that the solution can be continued beyond $t=\Tstar$. This proves Theorem \ref{thm:uv-tail-cont}.
\end{proof}

\subsection{Global strong solutions derived from subthreshold energy}

All results in the previous subsection concern only continuation at a \emph{finite terminal time}; hence they require neither the $4\pi$ threshold nor any smallness assumption. This subsection explains that, once the finite-terminal criteria of this paper are combined with the existing subthreshold global theory, one immediately obtains a genuine global strong-solution result.

To avoid continuing to use the specialized notation of the external literature, we agree that, for any time interval $I\subset[0,\infty)$,
\begin{equation}\label{eq:def-weak-class}
\begin{aligned}
\mathcal W(I\times\R^2):=\bigl\{U:I\times\R^2\to\Sph:
&\ U-Q\in L^{\infty}(I;L^2(\R^2;\R^3)),
\\
&\ \nabla U\in L^{\infty}(I;L^2(\R^2)),
\\
&\ \partial_tU\in L^2(I\times\R^2),
\\
&\ U\text{ satisfies }\eqref{eq:LLG-intro}\text{ in }\mathcal D'(I\times\R^2),
\\
&\ E(U(t_2))+a\int_{t_1}^{t_2}\!\!\int_{\R^2}|\partial_tU|^2\,dxdt\le E(U(t_1)),
\\
&\ \text{for a.e. }0\le t_1\le t_2\in I\bigr\}.
\end{aligned}
\end{equation}
In what follows, we only need two outputs from \cite[Proposition~2.1, Lemma~2.5, Corollary~1.1]{LiZhao}: the global unique $\mathcal W$-solution corresponding to subthreshold data, and the associated global $L^4_{t,x}$ control.

\begin{theorem}[Subthreshold input]\label{thm:weak-global-input}
Let $u_0\in W^{1,2}(\R^2;\Sph)$ and assume $E(u_0)<4\pi$. Then equation \eqref{eq:LLG-intro} has a unique global solution
\[
U\in \mathcal W([0,\infty)\times\R^2),
\]
and it satisfies
\begin{equation}\label{eq:weak-global-L4}
\int_0^{\infty}\int_{\R^2}|\nabla U(t,x)|^4\,dxdt<\infty.
\end{equation}
Moreover, $E(U(t))\to 0$ as $t\to\infty$.
\end{theorem}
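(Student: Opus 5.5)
The result is imported from \cite{LiZhao}. If one wanted to reprove it within the present framework, I would follow Struwe's strategy for the harmonic map heat flow \cite{Struwe}, adapted to \eqref{eq:LLG-intro}, and organize it in three steps: existence, a global $L^4$ bound, and decay.

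\emph{Step 1: existence and uniqueness.} Approximate $u_0$ in $W^{1,2}$ by data $u_0^n\in H_Q^2$ and run the strong local theory. Along each approximating flow, Lemma~\ref{lem:energy} gives $\partial_t u^n\in L^2$ uniformly, since $|\partial_t u|^2=(a^2+b^2)|\tau(u)|^2$. The key local tool is a localized Ladyzhenskaya inequality obtained from Lemma~\ref{lem:lady} with a cutoff:
\[
\int_{I\times\R^2}|\nabla u|^4\le C\sup_{t\in I,\,x\in\R^2}E\bigl(u(t);B_R(x)\bigr)\int_{I\times\R^2}\Bigl(|\nabla^2u|^2+\frac{|\nabla u|^2}{R^2}\Bigr).
\]
Combine it with the $H^2$ identity
\[
\int|\nabla^2u|^2\le C\int|\tau(u)|^2+C\int|\nabla u|^4,
\]
which follows from $\Delta u=\tau(u)-|\nabla u|^2u$ and Plancherel. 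When the local energy at scale $R$ lies below a small $\varepsilon_0$, the $L^4$ term is absorbed. A local energy inequality, obtained by testing the equation with $\varphi^2\partial_tu$, shows that this smallness persists for a time of order $\varepsilon_0R^2$. This yields uniform $L^4_{t,x}$ and $L^2_tH^2_{\mathrm{loc}}$ bounds, and hence a limit $U\in\mathcal W$. Uniqueness in $\mathcal W$ follows from a Freire-type argument: the difference of two solutions is estimated in $L^\infty_tL^2_x\cap L^2_t\dot H^1_x$, using that both solutions have $\nabla U\in L^4_{t,x}$ locally, which is itself supplied by the same small-local-energy estimate.

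\emph{Step 2: global $L^4$ control.} I expect this to be the main obstacle. One needs a single scale $R>0$ such that
\[
\sup_{t\ge0,\,x\in\R^2}E\bigl(U(t);B_R(x)\bigr)<\varepsilon_0,
\]
or at least a finite cover of $[0,\infty)$ by time intervals, each carrying such a scale. I would argue by contradiction. If such scales fail, there are $t_n$, $x_n$, $R_n$ with $E(U(t_n);B_{R_n}(x_n))=\varepsilon_0$ where smallness fails. Rescale around $(t_n,x_n)$ by $R_n$. Because $\int_0^\infty\|\tau(U)\|_{L^2}^2<\infty$, the rescaled tension fields tend to zero in $L^2_{t,x}$ on unit parabolic cylinders. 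The small-energy $H^2$ estimate then gives strong local $H^1$ convergence, away from at most finitely many points, to a harmonic map $\omega:\R^2\to\Sph$. This limit has energy at least $\varepsilon_0$ near the origin and total energy at most $E(u_0)<4\pi$. Since every nonconstant harmonic map $\R^2\to\Sph$ extends to $S^2$ with energy $4\pi|\deg|\ge4\pi$, $\omega$ must be constant. Constancy contradicts the energy $\varepsilon_0$ retained in the limit, once the strong convergence is justified. The same reasoning rules out concentration points, because these would also produce bubbles of energy at least $4\pi$. The delicate part is making the scales uniform in $t$ and in the spatial center, and not merely along a subsequence. Once that is done, summing the localized Ladyzhenskaya bound over time windows of length comparable to $R^2$ and using $\int_0^\infty\|\tau\|_{L^2}^2\le E(u_0)/a$ gives \eqref{eq:weak-global-L4}.

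\emph{Step 3: decay of the energy.} Finally, $\int_0^\infty\|\tau(U)\|_{L^2}^2<\infty$ gives a sequence $t_n\to\infty$ with $\tau(U(t_n))\to0$ in $L^2$. The bubble-tree analysis with energy below $4\pi$ forbids both nonconstant harmonic limits and bubbles. Hence no energy concentrates or escapes, so $E(U(t_n))\to0$ along this sequence. Energy monotonicity then upgrades this to $E(U(t))\to0$ for all $t\to\infty$.
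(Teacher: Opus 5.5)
Citing \cite{LiZhao} is exactly what the paper does: its proof is a one-line reference. As a proof, your first sentence therefore coincides with the paper's. The self-contained reproof you sketch, however, does not close, and it breaks precisely at the two global-in-time conclusions.

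In Step~2, the localized Ladyzhenskaya inequality carries the lower-order term $R^{-2}\iint|\nabla u|^2$. Over $[0,\infty)$ this equals $R^{-2}\int_0^\infty 2E(U(t))\,dt$, and nothing in your argument makes it finite; each window of length $R^2$ contributes about $\varepsilon_0E(U(t))$. So a uniform small-energy scale only gives $\int_0^T\norm{\nabla U}_{L^4_x}^4\,dt\lesssim\varepsilon_0E(u_0)\bigl(a^{-1}+TR^{-2}\bigr)$, which grows linearly in $T$. What closes the tail is smallness of the \emph{total} energy. Lemma~\ref{lem:lady} applied to $\nabla u$, together with $\norm{\nabla^2u}_{L^2_x}\le\norm{\tau(u)}_{L^2_x}+\norm{\nabla u}_{L^4_x}^2$, gives $\norm{\nabla u}_{L^4_x}^4\le CE(u)\bigl(\norm{\tau(u)}_{L^2_x}^2+\norm{\nabla u}_{L^4_x}^4\bigr)$. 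Hence once $E(U(t_0))$ is below an absolute constant, $\int_{t_0}^\infty\norm{\nabla U}_{L^4_x}^4\,dt\le CE(U(t_0))\int_{t_0}^\infty\norm{\tau(U)}_{L^2_x}^2\,dt<\infty$, and your local argument handles $[0,t_0]$. The energy decay must therefore be established \emph{before} \eqref{eq:weak-global-L4}, not derived after it.

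Your Step~3 does not establish that decay. Smallness of $\norm{\tau(U(t_n))}_{L^2_x}$ at single times is not scale invariant and says nothing about large scales. For a fixed nonconstant $v$ with $v-Q\in H^2$ and $E(v)<4\pi$, the maps $v(\cdot/\lambda_n)$ with $\lambda_n\to\infty$ satisfy $\norm{\tau}_{L^2_x}=\lambda_n^{-1}\norm{\tau(v)}_{L^2_x}\to0$, have no bubbles and a constant weak limit, yet keep energy $E(v)$. So ``no energy escapes'' is exactly what must be proved. One argument that works:
\begin{itemize}
\item Suppose $E(U(t))\downarrow E_\infty>0$, and fix a small $\varepsilon<E_\infty$.
\item At times $t_n\to\infty$, choose the radius $r_n$ with $\sup_xE(U(t_n);B_{r_n}(x))=\varepsilon$.
\item Rescale parabolically by $r_n$ on the \emph{forward} window $[t_n,t_n+r_n^2]$. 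There $\iint|\partial_tU|^2\le(a^2+b^2)\int_{t_n}^\infty\norm{\tau(U)}_{L^2_x}^2\,dt\to0$, which is a scale-invariant smallness.
\item The local energy identity keeps the unit-scale energies almost constant across the rescaled window.
\item $\varepsilon$-regularity then yields strong $L^2_tH^1_{\mathrm{loc}}$ convergence to a static harmonic map of energy $<4\pi$, hence a constant, contradicting the retained local energy $\varepsilon$.
\end{itemize}

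Separately, your uniqueness step is too quick. The absorption in your small-local-energy estimate presupposes $\nabla U\in L^4_{\mathrm{loc}}$ and $\nabla^2U\in L^2_{\mathrm{loc}}$, which an arbitrary element of $\mathcal W$ is not known to have. Freire's argument requires an independent $\varepsilon$-regularity theorem for weak solutions, with the energy inequality used to exclude instantaneous energy concentration.
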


\begin{proof}
This is precisely the conclusion of \cite[Proposition~2.1, Lemma~2.5, Corollary~1.1]{LiZhao} when the target manifold is $N=\Sph$.
\end{proof}

\begin{lemma}[Strong-to-weak class]\label{lem:strong-in-weakclass}
Let $u$ be a strong solution of \eqref{eq:LLG-intro} on $[0,T]$, namely
\[
u-Q\in C([0,T];H^2(\R^2;\R^3))\cap L^2((0,T);H^3(\R^2;\R^3)),
\qquad
\partial_tu\in L^2((0,T);H^1(\R^2;\R^3)).
\]
Then
\[
u\in \mathcal W([0,T]\times\R^2).
\]
\end{lemma}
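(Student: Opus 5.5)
We verify the defining conditions of $\mathcal W([0,T]\times\R^2)$ in \eqref{eq:def-weak-class} one at a time, using only the strong-solution regularity and the identities of Section~\ref{sec:prelim}.

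\emph{Integrability conditions.} Since $u-Q\in C([0,T];H^2)$, the maps $t\mapsto\norm{u(t)-Q}_{L^2_x}$ and $t\mapsto\norm{\nabla u(t)}_{L^2_x}$ are continuous on the compact interval $[0,T]$. Hence $u-Q\in L^\infty(0,T;L^2)$ and $\nabla u\in L^\infty(0,T;L^2)$. The hypothesis $\partial_tu\in L^2((0,T);H^1)$ gives $\partial_tu\in L^2((0,T)\times\R^2)$. The constraint $|u|=1$ is part of the definition of a strong solution.

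\emph{The equation in $\mathcal D'$.} The equation \eqref{eq:LLG-intro} holds a.e. in $(0,T)\times\R^2$, and each term is locally integrable. Indeed, $\Delta u\in L^\infty_tL^2_x$. By Lemma~\ref{lem:L4-by-Lap}, $|\nabla u|^2u\in L^\infty_tL^2_x$. Finally, $|u\times\Delta u|\le|\Delta u|$. Testing against $\varphi\in C_c^\infty$ therefore gives the distributional formulation directly. If one wants the weak form with only first derivatives, one can also use the divergence form \eqref{eq:LLG-div}.

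\emph{The energy inequality.} This is the one step requiring care, because $\mathcal W$ states dissipation in terms of $|\partial_tU|^2$ rather than $|\tau(u)|^2$. Lemma~\ref{lem:energy} gives, for all $t_1<t_2$ in $[0,T]$,
\[
E(u(t_2))+a\int_{t_1}^{t_2}\!\!\int|\tau(u)|^2=E(u(t_1)).
\]
We compare $|\partial_tu|^2$ with $|\tau(u)|^2$. Since $\tau(u)$ is tangent, the vectors $\tau(u)$ and $u\times\tau(u)$ are orthogonal with equal length. Hence
\[
|\partial_tu|^2=(a^2+b^2)|\tau(u)|^2 .
\]
Substituting, the exact energy law becomes
\[
E(u(t_2))+\frac{a}{a^2+b^2}\int_{t_1}^{t_2}\!\!\int|\partial_tu|^2=E(u(t_1)).
\]
This yields the inequality in \eqref{eq:def-weak-class} with constant $a/(a^2+b^2)$ in place of $a$. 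When $a^2+b^2\le1$ this constant is at least $a$, and the inequality holds as written.

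\emph{Expected obstacle: the dissipation constant.} If the normalization in \cite{LiZhao} is literally $a$, the case $a^2+b^2>1$ needs attention. I would first check whether $\mathcal W$ is intended with the natural constant $a/(a^2+b^2)$, as in the normalized form of LLG used in \cite{LiZhao}. If so, I would record that the definition should read this way. Only the form of the dissipation term matters downstream, since the uniqueness invoked in Theorem~\ref{thm:weak-global-input} depends on it. With the constant interpreted this way, the lemma follows, and the identity holds for every pair $t_1\le t_2$, not just almost every pair.
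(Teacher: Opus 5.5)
Your proposal is correct. It follows the same condition-by-condition check as the paper, and you handle the energy inequality more carefully than the paper does. Bounding $|\nabla u|^2u$ in $L^\infty_tL^2_x$ through Lemma~\ref{lem:L4-by-Lap} instead of the embedding $H^2\hookrightarrow W^{1,4}$ is an immaterial variation.

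On the energy inequality, the paper's proof cites Lemma~\ref{lem:energy} to assert $E(u(t_2))+a\int_{t_1}^{t_2}\!\int|\partial_tu|^2=E(u(t_1))$. This silently replaces $|\tau(u)|^2$ by $|\partial_tu|^2$. In fact $|\partial_tu|^2=(a^2+b^2)|\tau(u)|^2$, so the correct dissipation constant is $a/(a^2+b^2)$, exactly as you computed.

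The obstacle you flag is therefore genuine, and no better argument can remove it. One has
\[
E(u(t_2))+a\int_{t_1}^{t_2}\!\!\int_{\R^2}|\partial_tu|^2\,dxdt
=E(u(t_1))+a\,(a^2+b^2-1)\int_{t_1}^{t_2}\!\!\int_{\R^2}|\tau(u)|^2\,dxdt .
\]
When $a^2+b^2>1$, the defining inequality in \eqref{eq:def-weak-class} therefore fails on a set of pairs $(t_1,t_2)$ of positive measure, for every non-stationary strong solution ($\tau(u)\not\equiv0$). An example is any nonconstant datum with $E(u_0)<4\pi$, which cannot be harmonic.

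So the lemma as literally stated holds only when $a^2+b^2\le1$. As you propose, the constant in \eqref{eq:def-weak-class} should be $a/(a^2+b^2)$, or the equation should be normalized to match \cite{LiZhao}. That fix is needed for the lemma, and its use in the proof of Theorem~\ref{thm:below4pi-global}, to be consistent.
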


\begin{proof}
The strong-solution regularity immediately gives
\begin{equation*}
 u-Q\in C([0,T];L^2(\R^2;\R^3)),
 \qquad
 \nabla u\in L^{\infty}(0,T;L^2(\R^2)),
 \qquad
 \partial_tu\in L^2((0,T)\times\R^2).
\end{equation*}
Thus $u$ satisfies the first three spatial conditions in \eqref{eq:def-weak-class}.

Let $\eta\in C_c^{\infty}((0,T)\times\R^2;\R^3)$ be arbitrary. Using
\[
\partial_j(u\times \partial_j u)=u\times \partial_{jj}u
\]
and equation \eqref{eq:LLG-intro}, we have
\begin{align*}
0
&=\iint_{(0,T)\times\R^2}
\bigl(u_t-a(\Delta u+|\nabla u|^2u)-b\,u\times\Delta u\bigr)\cdot \eta\,dxdt
\\
&=-\iint u\cdot \partial_t\eta\,dxdt
+a\iint \nabla u: \nabla \eta\,dxdt
-a\iint |\nabla u|^2u\cdot \eta\,dxdt
\\
&\qquad
-b\sum_{j=1}^2\iint (u\times \partial_j u)\cdot \partial_j\eta\,dxdt.
\end{align*}
Moreover, by the two-dimensional Sobolev embedding $H^2(\R^2)\hookrightarrow W^{1,4}(\R^2)$,
\begin{equation*}
\nabla u\in L^{\infty}(0,T;L^4(\R^2)),
\qquad
\Delta u\in L^{\infty}(0,T;L^2(\R^2)),
\end{equation*}
and hence
\[
|\nabla u|^2u\in L^{\infty}(0,T;L^2(\R^2)),
\qquad
u\times \Delta u\in L^{\infty}(0,T;L^2(\R^2)).
\]
Therefore all terms above are meaningful, and $u$ satisfies \eqref{eq:LLG-intro} in $\mathcal D'((0,T)\times\R^2)$.

Finally, by Lemma~\ref{lem:energy}, for every $0\le t_1\le t_2\le T$,
\begin{equation*}
E(u(t_2))+a\int_{t_1}^{t_2}\int_{\R^2}|\partial_tu|^2\,dxdt
=E(u(t_1)).
\end{equation*}
In particular, $u$ satisfies the energy inequality in \eqref{eq:def-weak-class}. Hence
\[
u\in \mathcal W([0,T]\times\R^2).
\]
\end{proof}

\begin{proof}[Proof of Theorem \ref{thm:below4pi-global}]
Let $u$ be the maximal strong solution with initial data $u_0$. Suppose, to the contrary, that $\Tstar<\infty$. Then for every $T<\Tstar$, Lemma \ref{lem:strong-in-weakclass} gives
\[
u\in \mathcal W([0,T]\times\R^2).
\]
On the other hand, by Theorem \ref{thm:weak-global-input}, there exists a unique global solution
\[
U\in \mathcal W([0,\infty)\times\R^2)
\]
corresponding to the same initial datum $u_0$. By uniqueness in $\mathcal W$, necessarily
\[
u(t,x)=U(t,x),
\qquad
0\le t\le T.
\]
Since $T<\Tstar$ is arbitrary,
\begin{equation}\label{eq:strong-equals-weak}
u(t,x)=U(t,x)
\qquad\text{for a.e. }(t,x)\in[0,\Tstar)\times\R^2.
\end{equation}
Thus \eqref{eq:weak-global-L4} immediately implies that, for every $T_0\in[0,\Tstar)$,
\[
\int_{T_0}^{\Tstar}\int_{\R^2}|\nabla u(t,x)|^4\,dxdt
\le
\int_0^{\infty}\int_{\R^2}|\nabla U(t,x)|^4\,dxdt
<\infty.
\]
Since under the current assumption $0<\Tstar<\infty$, Theorem \ref{thm:L4-cont} applies directly and implies that $u$ can be continued beyond $t=\Tstar$. This contradicts the maximality of $\Tstar$. Therefore
\[
\Tstar=\infty.
\]
For each finite $T>0$, the strong-solution regularity
\[
u-Q\in C([0,T];H^2)\cap L^2((0,T);H^3),
\qquad
\partial_tu\in L^2((0,T);H^1)
\]
is exactly part of the definition of the maximal strong solution. Finally, \eqref{eq:strong-equals-weak} and \eqref{eq:weak-global-L4} give
\[
\int_0^{\infty}\int_{\R^2}|\nabla u(t,x)|^4\,dxdt<\infty.
\]
This proves Theorem \ref{thm:below4pi-global}.
\end{proof}

\begin{remark}
The proof above clearly separates ``finite-terminal continuation'' from ``subthreshold global existence.'' What is established internally in this paper is the finite-terminal criterion represented by Theorem \ref{thm:L4-cont} and Theorem \ref{thm:uv-tail-cont}; the $4\pi$ threshold is used only after the external subthreshold global theory of \cite{LiZhao} is introduced, in order to exclude finite terminal times. Thus, from a logical point of view, $4\pi$ is not a necessary premise of Proposition \ref{prop:H2-main} or the continuation argument following it; it is only an additional input used to upgrade the continuation theorem into a global theorem.
\end{remark}

\section{Degree-zero \texorpdfstring{$8\pi$}{8pi} obstruction}\label{sec:deg0}

The first four sections established an analytic continuation mechanism: they characterize which $L^4$ or ultraviolet-tail quantities must diverge at a finite terminal time, but they do not distinguish different topological sectors. This section is only a supplementary application. We add the degree-zero constraint defined in the introduction by \eqref{eq:degree-def}, namely
\[
\operatorname{deg}(u_0)=0,
\]
and invoke the classical Struwe-type bubble-profile decomposition. Our goal is to prove Theorem~\ref{thm:deg0-8pi-global} from the introduction: in this special sector, the minimal energy cost of a finite-time bubble tree is at least two copies of $4\pi$, and hence the general $4\pi$ energy obstruction can be pushed further up to $8\pi$. This improvement does not come from a new $H^2$ differential inequality, but from the combination of ``the strong-solution framework of the first four sections plus classical bubbling input plus degree-zero quantization.''

The topological degree has already been defined in \eqref{eq:degree-def}. We first prove two basic properties of this notation at the $H_Q^1$ level: the topological energy lower bound and conservation along strong-solution trajectories.

\begin{lemma}[Degree and energy]\label{lem:degree-energy}
Let $v\in H_Q^1(\R^2;\Sph)$. Then
\begin{equation}\label{eq:degree-energy}
4\pi\,\bigl|\operatorname{deg}(v)\bigr|\le E(v).
\end{equation}
If in addition
\[
v\in C\bigl([0,T];H_Q^2(\R^2;\R^3)\bigr),
\]
then $t\mapsto \operatorname{deg}(v(t))$ is constant.
\end{lemma}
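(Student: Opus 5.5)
The energy bound \eqref{eq:degree-energy} follows from the pointwise inequality recorded in the introduction, integrated over the plane. At each point, $v\perp\partial_1 v$ and $v\perp\partial_2 v$, and $|v|=1$. Hence
\[
|v\cdot(\partial_1v\times\partial_2v)|\le|\partial_1v||\partial_2v|\le\tfrac12\bigl(|\partial_1v|^2+|\partial_2v|^2\bigr)=\tfrac12|\nabla v|^2 .
\]
Integrating gives $4\pi|\deg(v)|\le\frac12\int|\nabla v|^2=E(v)$. The integral defining the degree converges absolutely for $v\in H^1_Q$, so nothing else is needed for the first claim.

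For conservation I would argue in two steps: first show the degree is continuous in time, then show it takes only integer values. A continuous integer-valued function on $[0,T]$ is constant.

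\emph{Continuity.} The map $t\mapsto v(t)$ is continuous into $H^2_Q\subset H^1_Q\cap L^\infty$. The integrand $v\cdot(\partial_1v\times\partial_2v)$ is trilinear with one bounded factor and two $L^2$ factors. The difference at times $t,s$ therefore telescopes into three terms. Two of them are bounded by $\|\nabla v(t)-\nabla v(s)\|_{L^2}\|\nabla v\|_{L^2}$. The third is $\int|v(t)-v(s)||\nabla v(s)|^2$, which tends to zero by dominated convergence along subsequences, since $v(t)-v(s)\to0$ in $L^2$ and $|v|\le 1$. Alternatively, the Sobolev embedding $H^2\hookrightarrow L^\infty$ gives this term directly.

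\emph{Integrality.} This is the main point. I would show that $\deg(w)\in\mathbb Z$ for every $w\in H^2_Q(\R^2;\Sph)$. By Sobolev embedding, $w-Q\in H^2(\R^2)$ is continuous and tends to $0$ at infinity. Composing with inverse stereographic projection from $\R^2\cup\{\infty\}\cong\Sph$, the map $w$ therefore extends to a continuous map $\tilde w:\Sph\to\Sph$ with $\tilde w(\infty)=Q$. I would then mollify: take $w_\epsilon=(w*\rho_\epsilon)/|w*\rho_\epsilon|$. This is well defined because $w*\rho_\epsilon\to w$ uniformly, so $|w*\rho_\epsilon|\ge\frac12$ for small $\epsilon$. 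Moreover $w_\epsilon\to w$ in $H^1_Q$, uniformly, and $w_\epsilon-Q$ is small near infinity.

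Next I would modify $w_\epsilon$ outside a large ball so that it equals $Q$ exactly there. Near infinity the map takes values in a small cap around $Q$, so one can cut off in stereographic coordinates on the target at the cost of a small $H^1$ change. The $H^1$ change is small because the energy of $w-Q$ outside large balls is small. The resulting map is a smooth map $\Sph\to\Sph$. For such maps, $\frac1{4\pi}\int w^*\omega$ is the Brouwer degree, hence an integer. By the $H^1$ continuity of $\deg$ established in the continuity step (the same trilinear estimate), the approximations give $\deg(w)\in\mathbb Z$. The cutoff near infinity is the step I expect to require the most care.

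A cleaner alternative, which I would use if the cutoff becomes messy, is to differentiate directly. For smooth solutions one has $\partial_t\bigl(v\cdot\partial_1v\times\partial_2v\bigr)=\partial_j(\ldots)$, a divergence with integrable flux. With the regularity $\partial_tv\in L^2H^1$ available for solutions, this gives $\frac{d}{dt}\deg=0$ after approximation. However, the hypothesis as stated only gives continuity into $H^2_Q$, with no equation, so the integrality route is the one I would commit to.
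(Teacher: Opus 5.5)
Your proof is correct and follows the paper's route. The energy inequality comes from the same pointwise bound $|v\cdot(\partial_1v\times\partial_2v)|\le|\partial_1v|\,|\partial_2v|\le\frac12|\nabla v|^2$, and conservation comes from the same three-term telescoping of the Jacobian, controlled via $H^2\hookrightarrow L^\infty$ or your dominated-convergence variant, followed by ``continuous and integer-valued implies constant.'' The one difference is that the paper simply asserts the degree is integer-valued, whereas your mollify--normalize--cut-off approximation by smooth maps equal to $Q$ outside a compact set, combined with $H^1_Q$-continuity of $\deg$, actually proves it. In that cutoff step, write $f$ for $w_\epsilon$ in stereographic coordinates centred at $Q$ and $\chi_R(x)=\chi(x/R)$ for the cutoff. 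The term $f\nabla\chi_R$ is then controlled not by the energy tail but by $\|f\|_{L^\infty(|x|>R)}\to0$ together with the scale-invariant bound $\|\nabla\chi_R\|_{L^2}=\|\nabla\chi\|_{L^2}$ in two dimensions.
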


\begin{proof}
We first prove \eqref{eq:degree-energy}. By $|v|=1$, $|a\times b|\le |a|\,|b|$, and the arithmetic--geometric mean inequality
\[
ab\le \frac{a^2+b^2}{2}\qquad (a,b\ge 0),
\]
we have pointwise
\[
\bigl|v\cdot(\partial_1 v\times \partial_2 v)\bigr|
\le |v|\,|\partial_1 v\times \partial_2 v|
\le |\partial_1 v|\,|\partial_2 v|
\le \frac12\bigl(|\partial_1 v|^2+|\partial_2 v|^2\bigr).
\]
Integrating over $\R^2$ yields
\[
4\pi\,\bigl|\operatorname{deg}(v)\bigr|
\le
\frac12\int_{\R^2}|\nabla v|^2\,dx
=E(v),
\]
which is \eqref{eq:degree-energy}.

We next explain why the degree is preserved along an $H_Q^2$ continuous path. The first part has already shown that, whenever $v\in H_Q^1$, the degree is meaningful and satisfies \eqref{eq:degree-energy}. To prove conservation along a time-continuous trajectory, we further use the two-dimensional embedding $H^2(\R^2)\hookrightarrow L^\infty(\R^2)$ to control the zeroth-order difference terms in $\mathcal J(v)-\mathcal J(w)$. Set
\[
\mathcal J(w):=w\cdot(\partial_1 w\times \partial_2 w).
\]
For any $v,w\in H_Q^2(\R^2;\Sph)$, the identity
\begin{align*}
\mathcal J(v)-\mathcal J(w)
={}&
(v-w)\cdot(\partial_1 v\times \partial_2 v)
+w\cdot\bigl((\partial_1 v-\partial_1 w)\times \partial_2 v\bigr)
\\
&
+w\cdot\bigl(\partial_1 w\times (\partial_2 v-\partial_2 w)\bigr)
\end{align*}
and H\"older's inequality give
\begin{align*}
\norm{\mathcal J(v)-\mathcal J(w)}_{L^1_x}
\le{}&
\norm{v-w}_{L^\infty_x}\norm{\nabla v}_{L^2_x}^2
\\
&+\norm{\nabla(v-w)}_{L^2_x}\norm{\nabla v}_{L^2_x}
+\norm{\nabla w}_{L^2_x}\norm{\nabla(v-w)}_{L^2_x}.
\end{align*}
By the two-dimensional Sobolev embedding $H^2(\R^2)\hookrightarrow L^\infty(\R^2)$, the right-hand side is controlled by $\norm{v-w}_{H^2_x}$. Therefore, if $v_n\to v$ in $H_Q^2$, then $\operatorname{deg}(v_n)\to \operatorname{deg}(v)$. Hence, if $v\in C([0,T];H_Q^2)$, the map $t\mapsto \operatorname{deg}(v(t))$ is continuous; since the degree is integer-valued, this function must be constant.
\end{proof}

We next keep only one classical bubbling input, stated in the minimal form actually needed later. We say that
\[
U\in \mathcal W([0,T_{\mathrm s})\times\R^2)\cap C^\infty((0,T_{\mathrm s})\times\R^2;\Sph)
\]
has a \emph{finite singularity} at $T_{\mathrm s}$ if there do not exist $\delta>0$ and
\[
\widetilde U\in \mathcal W([0,T_{\mathrm s}+\delta)\times\R^2)\cap C^\infty((0,T_{\mathrm s}+\delta)\times\R^2;\Sph)
\]
such that $\widetilde U=U$ on $(0,T_{\mathrm s})\times\R^2$.

\begin{theorem}[Bubble decomposition]\label{thm:bubble-degree-input}
Let
\[
U\in \mathcal W([0,T_{\mathrm s})\times \R^2)\cap C^\infty((0,T_{\mathrm s})\times\R^2;\Sph),
\qquad
0<T_{\mathrm s}<\infty,
\]
and assume that $T_{\mathrm s}$ is a finite singularity time in the sense above. Then one can choose a subsequence $t_n\uparrow T_{\mathrm s}$, and there exist
\[
u_\ast\in H_Q^1(\R^2;\Sph),
\qquad
J\in \mathbb N,
\]
finitely many singular points $a_1,\dots,a_N\in \R^2$, finitely many scales and centers
\[
x_{j,n}\in \R^2,
\qquad
\lambda_{j,n}>0,
\qquad
\lambda_{j,n}\downarrow0
\quad (1\le j\le J),
\]
finitely many nonconstant harmonic maps
\[
\omega_j:\mathbb S^2\to \mathbb S^2
\qquad (1\le j\le J),
\]
and an index map $\ell:\{1,\dots,J\}\to\{1,\dots,N\}$, such that the following conclusions hold. There exists $\rho_0>0$ such that the closed balls $\overline{B(a_\alpha,2\rho_0)}$ are pairwise disjoint, and for every $0<\rho<\rho_0$, if
\[
K_\rho:=\R^2\setminus \bigcup_{\alpha=1}^N B(a_\alpha,\rho),
\]
then
\begin{align}
U(t_n)-Q &\rightharpoonup u_\ast-Q
\qquad\text{weakly in }H^1(\R^2),
\label{eq:bubble-weak-limit}
\\
U(t_n) &\to u_\ast
\qquad\text{strongly in }H^1(K_\rho),
\label{eq:bubble-strong-away}
\\
x_{j,n} &\to a_{\ell(j)},
\label{eq:bubble-center-limit}
\\
U(t_n,x_{j,n}+\lambda_{j,n}y)
&\to \omega_j(y)
\qquad\text{strongly in }C^1_{\mathrm{loc}}(\R^2),
\label{eq:bubble-rescaled}
\\
\frac{\lambda_{i,n}}{\lambda_{j,n}}
+\frac{\lambda_{j,n}}{\lambda_{i,n}}
+\frac{|x_{i,n}-x_{j,n}|}{\lambda_{i,n}+\lambda_{j,n}}
&\longrightarrow \infty
\qquad (i\neq j),
\label{eq:bubble-orthogonality}
\\
E\bigl(U(t_n)\bigr)
&\longrightarrow
E(u_\ast)+\sum_{j=1}^J E(\omega_j).
\label{eq:bubble-energy-split}
\end{align}
\end{theorem}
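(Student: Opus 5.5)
This result is the classical Struwe--Qing--Tian bubble-tree input, specialized to the LLG flow; I would prove it by reduction to the known harmonic-map-heat-flow compactness theory. The decisive structural input is Lemma~\ref{lem:energy}, in the $\mathcal W$-form $E(U(t_2))+a\int_{t_1}^{t_2}\!\int|\partial_tU|^2\le E(U(t_1))$. Since $\partial_tU=a\tau(U)+b\,U\times\tau(U)$ and $|\partial_tU|^2=(a^2+b^2)|\tau(U)|^2$, this gives $\int_0^{T_{\mathrm s}}\|\tau(U)(t)\|_{L^2}^2\,dt<\infty$. Hence I can pick $t_n\uparrow T_{\mathrm s}$ with $\|\tau(U(t_n))\|_{L^2}\to0$. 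Then $U(t_n)$ is a sequence of smooth maps of bounded energy whose tension fields tend to zero in $L^2$, i.e.\ an approximate harmonic-map (Palais--Smale-type) sequence.

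First, I would pass to the weak limit \eqref{eq:bubble-weak-limit}, using $U(t_n)-Q$ bounded in $H^1$ from the energy bound together with Lemma~\ref{lem:L2-relative} (or its $\mathcal W$ analogue).

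Second, I would locate the singular set. Let $\varepsilon_0$ be the Sard--Uhlenbeck/Struwe $\varepsilon$-regularity constant, so that $\int_{B_{2r}}|\nabla U|^2<\varepsilon_0$ together with a small tension gives $W^{2,2}$ bounds on $B_r$, hence $C^{0,\alpha}$ and, by bootstrapping, $H^1$-strong compactness. Define $\{a_\alpha\}$ as the points where the energy concentrates at level $\ge\varepsilon_0$ in every ball along the sequence. Energy boundedness gives $N\le 2E(u_0)/\varepsilon_0$, so the set is finite, and $\rho_0$ is then chosen small enough that the balls $\overline{B(a_\alpha,2\rho_0)}$ are disjoint. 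Covering $K_\rho$ by small balls and applying $\varepsilon$-regularity yields \eqref{eq:bubble-strong-away}. The set is nonempty: otherwise the local parabolic $\varepsilon$-regularity of Struwe type, which holds for LLG as in Harpes/Ding--Wang, would give smooth extension past $T_{\mathrm s}$, contradicting the finite-singularity assumption.

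Third, I would extract the bubbles at each $a_\alpha$ by the standard rescaling. Choose $x_{1,n}\to a_\alpha$ and $\lambda_{1,n}\downarrow 0$ with $\sup_x\int_{B(x,\lambda_{1,n})\cap B(a_\alpha,\rho)}|\nabla U(t_n)|^2=\varepsilon_0/2$, attained at $x_{1,n}$. The rescaled maps $U(t_n,x_{1,n}+\lambda_{1,n}y)$ have tension $\lambda_{1,n}^2\tau$, whose $L^2$ norm is $\lambda_{1,n}\|\tau\|_{L^2}\to0$. The previous step then gives $C^1_{\mathrm{loc}}$ convergence, after the elliptic bootstrap, to a finite-energy harmonic map $\mathbb R^2\to\Sph$. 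By Sacks--Uhlenbeck removable singularities this extends to a nonconstant harmonic $\omega_1:\mathbb S^2\to\mathbb S^2$, which is \eqref{eq:bubble-rescaled}. Iterating Ding--Tian/Qing--Tian style on the remaining concentration regions produces further bubbles with the separation \eqref{eq:bubble-orthogonality}. The iteration stops after finitely many steps because each bubble carries energy at least $4\pi$ by Lemma~\ref{lem:degree-energy} and nonconstancy.

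The main obstacle is the energy identity \eqref{eq:bubble-energy-split}, i.e.\ showing that no energy is lost in the neck regions $\{R\lambda_{j,n}\le|x-x_{j,n}|\le\delta\}$. Here I would follow Qing--Tian/Lin--Wang. Write the annulus in cylindrical coordinates and use $\|\tau(U(t_n))\|_{L^2}\to0$ to obtain a Pohozaev-type estimate showing that the radial and angular energies are asymptotically equal. Combine this with the smallness of the angular energy on each dyadic annulus, which follows from the no-further-bubble condition via $\varepsilon$-regularity. Together these force the total neck energy to tend to zero, and summing over the bubbles then gives \eqref{eq:bubble-energy-split}.
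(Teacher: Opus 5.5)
\textbf{There is a genuine gap: the reduction to a Palais--Smale sequence fails at a finite singular time.}

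From $\int_0^{T_{\mathrm s}}\|\tau(U(t))\|_{L^2}^2\,dt<\infty$ you cannot pick $t_n\uparrow T_{\mathrm s}$ with $\|\tau(U(t_n))\|_{L^2}\to0$. That inference is valid only when the terminal time is $+\infty$. On a finite interval, an integrable function need not have $\liminf$ zero at the endpoint, or even stay bounded there (think of $(T_{\mathrm s}-t)^{-1/2}$). All integrability gives is $\liminf_{t\uparrow T_{\mathrm s}}(T_{\mathrm s}-t)\|\tau(U(t))\|_{L^2}^2=0$.

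So $U(t_n)$ need not satisfy even $\sup_n\|\tau(U(t_n))\|_{L^2}<\infty$, which is the standing hypothesis of the Ding--Tian/Qing--Tian theory for approximate harmonic maps. Every later step that uses the tension bound then fails:
\begin{itemize}
\item the rescaled tension $\lambda_{j,n}\|\tau(U(t_n))\|_{L^2}$ need not tend to zero;
\item elliptic $\varepsilon$-regularity on balls of fixed size gives no uniform $W^{2,2}$ bound, so \eqref{eq:bubble-strong-away} does not follow;
\item the Pohozaev neck estimate on $\{R\lambda_{j,n}\le|x-x_{j,n}|\le\delta\}$ carries an error proportional to $\delta\|\tau(U(t_n))\|_{L^2}E(u_0)^{1/2}$, which need not be small.
\end{itemize}
There is a second, related defect. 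A Palais--Smale sequence on $\R^2$ can lose energy at spatial infinity (translated bubbles). Nothing in your argument forces the concentration set to be finitely many points of $\R^2$, or gives strong convergence on the unbounded set $K_\rho$.

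\textbf{The missing ingredient is parabolic.} Since $a>0$, for a cutoff $\phi$ and $E_\phi(v):=\frac12\int|\nabla v|^2\phi^2$ one has the localized energy inequality
$\bigl|E_\phi(U(t_2))-E_\phi(U(t_1))\bigr|\le C\int_{t_1}^{t_2}\|\tau(U)\|_{L^2}^2\,dt+C(t_2-t_1)\|\nabla\phi\|_{L^\infty}^2E(u_0)$.
This does three things.
\begin{itemize}
\item It gives uniform tightness at infinity on $[0,T_{\mathrm s})$, because $T_{\mathrm s}<\infty$.
\item Together with the parabolic $\varepsilon$-regularity you already invoke for nonemptiness, it gives smooth convergence $U(t)\to u_\ast$ away from the $a_\alpha$. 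That, not elliptic regularity, is the source of \eqref{eq:bubble-strong-away}.
\item It controls the outer necks. Choose $t_n$ with $(T_{\mathrm s}-t_n)\|\tau(U(t_n))\|_{L^2}^2\to0$. Apply the inequality on $[t_n,t]$ with a cutoff adapted to $\{K\sqrt{T_{\mathrm s}-t_n}\le|x-a_\alpha|\le\rho\}$, then let $t\uparrow T_{\mathrm s}$. This shows the energy of $U(t_n)$ there is at most $E(u_\ast;B(a_\alpha,2\rho))+O(K^{-2})+o(1)$.
\end{itemize}
Consequently every bubble lives at a scale $\lambda_{j,n}\lesssim\sqrt{T_{\mathrm s}-t_n}$, where the rescaled tension does vanish. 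Only the inner part of each neck needs the Pohozaev argument, now with error $\lesssim K\sqrt{T_{\mathrm s}-t_n}\,\|\tau(U(t_n))\|_{L^2}\to0$. This is the Lin--Wang finite-singular-time energy identity transplanted to LLG. The paper bypasses all of it by citing Li--Zhao.

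A minor point: Lemma~\ref{lem:degree-energy} only gives $E\ge4\pi|\deg|$, which is vacuous in degree zero. Termination of the bubble iteration should instead use the $\varepsilon_0$ energy gap for nonconstant harmonic maps, or the fact that nonconstant harmonic maps $\mathbb S^2\to\mathbb S^2$ are $\pm$holomorphic.
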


\begin{proof}
This is the direct specialization to the target manifold $N=\Sph$ of the Struwe-type bubble decomposition at finite singularity times for the two-dimensional Landau--Lifshitz flow in \cite{LiZhao}; the outputs \eqref{eq:bubble-weak-limit}--\eqref{eq:bubble-energy-split} are exactly all that will be invoked below.\qedhere
\end{proof}

\begin{proposition}[Strong endpoint singularity]\label{prop:maximal-strong-singular}
Let $u$ be a maximal strong solution of \eqref{eq:LLG-intro}, and suppose
\[
0<\Tstar<\infty.
\]
Then for every $T<\Tstar$,
\[
u\in \mathcal W([0,T]\times\R^2),
\qquad
u\in C^\infty((0,T)\times\R^2;\Sph),
\]
and $\Tstar$ is a finite singularity time in the sense above.
\end{proposition}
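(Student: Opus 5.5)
The proof has three parts, taken in order. First I check membership in the weak class, then smoothness, and finally I rule out any extension through $\Tstar$.

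\emph{Weak class.} Fix $T<\Tstar$. Since $[0,T]\Subset[0,\Tstar)$, the maximal strong solution has the regularity $u-Q\in C([0,T];H^2)\cap L^2((0,T);H^3)$ and $\partial_tu\in L^2((0,T);H^1)$. Lemma~\ref{lem:strong-in-weakclass} then gives $u\in\mathcal W([0,T]\times\R^2)$ directly.

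\emph{Smoothness.} Apply Lemma~\ref{lem:strong-H2-transfer} with $T_0=0$ and $T_1=T'$ for some $T<T'<\Tstar$. It shows that $u$ is smooth on $[\varepsilon,T']\times\R^2$ for every $\varepsilon>0$. Taking the union over $\varepsilon$ yields $u\in C^\infty((0,T)\times\R^2;\Sph)$. The same reasoning gives $u\in\mathcal W([0,\Tstar)\times\R^2)\cap C^\infty((0,\Tstar)\times\R^2)$: the energy inequality and the distributional equation are local in time, and the bounds $\sup_t\|u-Q\|_{L^2}<\infty$ and $\sup_t\|\nabla u\|_{L^2}<\infty$ on $[0,\Tstar)$ come from Lemma~\ref{lem:L2-relative} and Lemma~\ref{lem:energy} because $\Tstar<\infty$. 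So $u$ is an admissible object in the definition of a finite singularity.

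\emph{No extension.} I argue by contradiction. Suppose there are $\delta>0$ and $\widetilde U\in\mathcal W([0,\Tstar+\delta)\times\R^2)\cap C^\infty((0,\Tstar+\delta)\times\R^2;\Sph)$ with $\widetilde U=u$ on $(0,\Tstar)\times\R^2$. The aim is to bound $\|u(t)-Q\|_{H^2}$ uniformly as $t\uparrow\Tstar$, which contradicts the continuation alternative $\limsup_{t\uparrow\Tstar}\|u(t)-Q\|_{H^2}=\infty$. The cleanest route is through Theorem~\ref{thm:L4-cont}. Fix $T_0\in(0,\Tstar)$. Since $\widetilde U$ is smooth on the compact slab $[T_0,\Tstar+\delta/2]\times\R^2$ in the time variable, it suffices to show $\int_{T_0}^{\Tstar}\|\nabla\widetilde U\|_{L^4_x}^4\,dt<\infty$. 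Then Theorem~\ref{thm:L4-cont} applies to $u$ and contradicts the maximality of $\Tstar$.

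The main obstacle is that smoothness on $(0,\Tstar+\delta)\times\R^2$ is only local in space. It gives no uniform spatial integrability of $\nabla\widetilde U$ or $\Delta\widetilde U$ on the unbounded domain. My plan is to obtain that integrability as follows. On $[T_0,\Tstar]$ the energy inequality yields $\partial_t\widetilde U\in L^2$. I would then use a localized $\varepsilon$-regularity argument for the LLG flow, the energy-class analogue of Struwe's estimate, which is presumably available in \cite{LiZhao}. The idea is to cover $\R^2$ by balls of radius $r$ on which the energy of $\widetilde U$ is below a fixed small threshold for $t\in[T_0,\Tstar+\delta/2]$. Such $r$ exists by uniform continuity of $\widetilde U$ on compact sets, together with the smallness of the energy tail near infinity, which is inherited from $u(T_0)\in H^2_Q$. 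On each such small-energy parabolic cylinder, local $\varepsilon$-regularity gives $\int\!\!\int|\nabla\widetilde U|^4\le C\,\bigl(\text{local energy}\bigr)\cdot\bigl(\text{local }\|\nabla^2\widetilde U\|_{L^2}^2+\|\partial_t\widetilde U\|_{L^2}^2\bigr)$. Summing over a finite-overlap cover gives a finite global $L^4_{t,x}$ bound.

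If this step turns out to be delicate, I would fall back on comparing with the weak solution and invoking uniqueness in $\mathcal W$ below the singular set. In either case the contradiction then completes the proof that $\Tstar$ is a finite singularity time.
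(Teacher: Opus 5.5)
Your first two parts agree with the paper: Lemma~\ref{lem:strong-in-weakclass} gives the weak class, and positive-time parabolic regularization gives smoothness. You also remark that in fact $u\in\mathcal W([0,\Tstar)\times\R^2)$. The definition of a finite singularity requires this, and the paper passes over it.

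For the no-extension part you take a genuinely different route. The paper asserts, by ``positive-time parabolic regularization,'' that the extension $\widetilde u$ has bounded $H^2_Q$ norm on compact time intervals around $\Tstar$. It then restarts the local $H^2_Q$ theory at $\Tstar-\varepsilon$, continues the new solution along $\widetilde u$, and concludes by strong uniqueness. You instead reduce to Theorem~\ref{thm:L4-cont}, so you only need $\nabla u\in L^4_{t,x}((T_0,\Tstar)\times\R^2)$, and you get it from Struwe's no-concentration mechanism.

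Your route names the real difficulty that the paper's one-line appeal hides. Smoothness is local in space, so a global $H^2$ bound for a locally smooth $\mathcal W$-extension is not automatic, and justifying the paper's assertion needs essentially your argument. Your route uses only boundedness of $\nabla\widetilde U$ and $\nabla^2\widetilde U$ on compact subsets of $[T_0,\Tstar]\times\R^2$, plus the energy law of $u$ itself. Nothing about $\widetilde U$ after $\Tstar$ is needed, and no uniqueness comparison with $\widetilde U$. So the slab up to $\Tstar+\delta/2$ and your fallback can be dropped, and an interior/exterior split can replace the cover by $r$-balls. The paper's route is shorter on the page and stays inside the local $H^2_Q$ theory without invoking Theorem~\ref{thm:L4-cont}, but it leaves its key regularity claim unproved.

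When you write it out, tighten three points.

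First, the interior input is boundedness of $\nabla\widetilde U$ on compacts, which smoothness gives. Uniform continuity of $\widetilde U$ itself is not the relevant property.

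Second, the smallness of the energy tail is not inherited from $u(T_0)\in H^2_Q$ alone; it must be propagated in time by the localized energy identity. Take a cutoff $\psi_R$ vanishing on $B(0,R/2)$, equal to $1$ off $B(0,R)$, with $\abs{\nabla\psi_R}\le C/R$. Then for $T_0\le t<\Tstar$,
\[
\frac12\int\psi_R^2\abs{\nabla u(t)}^2dx\le\frac12\int\psi_R^2\abs{\nabla u(T_0)}^2dx+\frac CR(\Tstar-T_0)^{1/2}E(u_0)^{1/2}\norm{\partial_tu}_{L^2((T_0,\Tstar)\times\R^2)}.
\]
Here $\partial_t u\in L^2_{t,x}$ because $\abs{\partial_tu}^2=(a^2+b^2)\abs{\tau(u)}^2$, combined with Lemma~\ref{lem:energy}.

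Third, your displayed estimate still has the local $\norm{\nabla^2\widetilde U}_{L^2}^2$ on the right, so summing over the cover needs the second half of Struwe's argument. This is the time-slice elliptic estimate from $\Delta u=\tau(u)-\abs{\nabla u}^2u$. You absorb $\norm{\abs{\nabla u}^2}_{L^2}$ on the exterior via Ladyzhenskaya and the small exterior energy. The absorption is legitimate because $u(t)\in H^2_Q$ for each $t<\Tstar$. This gives $\int_{T_0}^{\Tstar}\int\abs{\nabla^2u}^2\,dxdt<\infty$, and then the $L^4_{t,x}$ bound follows.
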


\begin{proof}
By Lemma~\ref{lem:strong-in-weakclass}, for every $T<\Tstar$,
\[
u\in \mathcal W([0,T]\times\R^2).
\]
Moreover, since
\[
u-Q\in C([0,\Tstar);H^2)\cap L^2_{\mathrm{loc}}([0,\Tstar);H^3),
\qquad
\partial_tu\in L^2_{\mathrm{loc}}([0,\Tstar);H^1),
\]
substitution into the divergence form \eqref{eq:LLG-div} shows that the right-hand side belongs to $L^2_{\mathrm{loc}}([0,\Tstar);H^1)$. Standard quasilinear parabolic regularization and bootstrap then give
\[
u\in C^\infty((0,\Tstar)\times\R^2;\Sph).
\]

It remains to prove that $\Tstar$ is indeed a finite singularity time. Suppose the contrary. Then there exist $\delta>0$ and
\[
\widetilde u\in \mathcal W([0,\Tstar+\delta)\times\R^2)
\cap C^\infty((0,\Tstar+\delta)\times\R^2;\Sph)
\]
such that $\widetilde u=u$ on $(0,\Tstar)\times\R^2$. By positive-time parabolic regularization, on every compact time interval $[s_1,s_2]\Subset(0,\Tstar+\delta)$ one has
\[
\widetilde u-Q\in C([s_1,s_2];H^2(\R^2;\R^3)),
\qquad
\sup_{s\in[s_1,s_2]}\norm{\widetilde u(s)-Q}_{H^2}<\infty.
\]
Choose $\varepsilon\in(0,\delta/4)$ and set $t_0=\Tstar-\varepsilon$. Then
\[
\widetilde u(t_0)=u(t_0)\in H_Q^2(\R^2;\Sph),
\qquad
\sup_{s\in[t_0,\Tstar+\varepsilon]}\norm{\widetilde u(s)-Q}_{H^2}<\infty.
\]
The local strong-solution theory gives a unique $H_Q^2$ strong solution $v$ with initial value $u(t_0)$, defined on some interval $[t_0,t_0+\eta)$; the boundedness above allows the standard continuation argument to extend $v$ along $\widetilde u$ step by step, at least until $\Tstar+\varepsilon$. On the other hand, $u$ and $\widetilde u$ coincide on $(t_0,\Tstar)$, and $v$ and $\widetilde u$ have the same initial value. Hence, by uniqueness of strong solutions,
\[
v=\widetilde u=u
\qquad\text{on }[t_0,\Tstar)\times\R^2,
\]
and $v$ continues $u$ beyond $\Tstar$. This contradicts the maximality of $\Tstar$. Therefore $\Tstar$ must be a finite singularity time.
\end{proof}

\begin{lemma}[Degree splitting]\label{lem:degree-splitting}
In the notation of Theorem~\ref{thm:bubble-degree-input}, set
\[
d_j:=\operatorname{deg}(\omega_j)\in \mathbb Z\setminus\{0\}
\qquad (1\le j\le J).
\]
Then
\begin{equation}\label{eq:bubble-degree-split}
\operatorname{deg}(u_0)
=
\operatorname{deg}(u_\ast)+\sum_{j=1}^J d_j,
\end{equation}
and
\begin{equation}\label{eq:bubble-energy-lower}
\lim_{n\to\infty}E\bigl(U(t_n)\bigr)
\ge
4\pi\,\bigl|\operatorname{deg}(u_\ast)\bigr|
+
4\pi\sum_{j=1}^J |d_j|.
\end{equation}
\end{lemma}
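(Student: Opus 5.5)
The energy lower bound \eqref{eq:bubble-energy-lower} is the easy half, so I would prove it first. By \eqref{eq:bubble-energy-split},
\[
\lim_n E(U(t_n))=E(u_\ast)+\sum_j E(\omega_j).
\]
Lemma~\ref{lem:degree-energy} gives $E(u_\ast)\ge 4\pi|\deg(u_\ast)|$, since $u_\ast\in H^1_Q$. For each bubble, the same pointwise inequality $|\omega\cdot(\partial_1\omega\times\partial_2\omega)|\le\frac12|\nabla\omega|^2$, applied on $\R^2$ to $\omega_j$ viewed through stereographic coordinates, gives $E(\omega_j)\ge 4\pi|d_j|$. Both the energy and the degree integral are conformally invariant, so the bubble can be treated as a finite-energy map $\R^2\to\Sph$. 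I also need $d_j\neq0$. This follows because a nonconstant harmonic map $\Sph\to\Sph$ is $\pm$-holomorphic, so it has $E(\omega_j)=4\pi|d_j|>0$. Summing these bounds yields \eqref{eq:bubble-energy-lower}.

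For the degree identity \eqref{eq:bubble-degree-split}, I would first use Lemma~\ref{lem:degree-energy} together with $U=u\in C([0,\Tstar);H^2_Q)$ to get $\deg(U(t_n))=\deg(u_0)$ for all $n$. The task then reduces to showing
\[
\lim_n\int_{\R^2}\mathcal J(U(t_n))\,dx=4\pi\deg(u_\ast)+4\pi\sum_j d_j .
\]
I would split $\R^2=K_\rho\cup\bigcup_\alpha B(a_\alpha,\rho)$. On $K_\rho$, the strong convergence \eqref{eq:bubble-strong-away} combined with the $L^1$ difference estimate for $\mathcal J$ from the proof of Lemma~\ref{lem:degree-energy} gives convergence to $\int_{K_\rho}\mathcal J(u_\ast)$. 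That estimate needs only $H^1$ control of the gradient difference, plus $|v-w|\le2$ with the $L^2$-strong gradient convergence, so the $L^\infty$ factor can be handled by dominated convergence. As $\rho\to0$, the contribution $\int_{B(a_\alpha,\rho)}\mathcal J(u_\ast)\to0$ by absolute continuity.

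Inside each ball $B(a_\alpha,\rho)$, I would decompose further into bubble regions $B(x_{j,n},R\lambda_{j,n})$ for $\ell(j)=\alpha$, together with the complementary neck regions. Orthogonality \eqref{eq:bubble-orthogonality} makes the bubble regions essentially disjoint, after removing inner bubbles from outer ones at smaller relative scales. On each bubble region, \eqref{eq:bubble-rescaled} and scaling invariance of $\int\mathcal J$ give convergence to $\int_{B(0,R)\setminus\cdots}\mathcal J(\omega_j)$, and this tends to $4\pi d_j$ as $R\to\infty$. The neck regions are controlled by $|\mathcal J|\le\frac12|\nabla U|^2$ and the energy identity \eqref{eq:bubble-energy-split}. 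The total energy is exhausted by the body map and the bubbles, so the energy in necks and in the remainders of the balls tends to zero as $n\to\infty$, then $R\to\infty$, then $\rho\to0$. Hence the neck contributions to the degree integral vanish.

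The main obstacle is this neck analysis. Specifically, one must show that the energy identity \eqref{eq:bubble-energy-split}, which is only stated globally, forces $\limsup_n\int_{\text{necks}}|\nabla U(t_n)|^2\to0$. I would do this by lower-bounding the energy of each piece separately. Lower semicontinuity and strong convergence give $E$ on $K_\rho$ tending to $E(u_\ast|_{K_\rho})$, and the $C^1_{\mathrm{loc}}$ convergence gives the energy on each bubble region. The global identity then leaves zero energy for the complement. Making this rigorous requires careful bookkeeping of nested bubbles, which I would organize by induction on scale using \eqref{eq:bubble-orthogonality}.
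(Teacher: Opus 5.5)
Your proposal is correct and follows essentially the same route as the paper. The shared steps are: degree conservation along the strong solution, convergence of $\int_{K_\rho}\mathcal J(U(t_n))$ from strong $H^1$ convergence, rescaled $C^1$ convergence plus scale invariance on the bubble balls, the neck controlled by $|\mathcal J|\le\frac12|\nabla U|^2$ and \eqref{eq:bubble-energy-split}, then $\rho\downarrow0$, and \eqref{eq:bubble-energy-lower} from $E\ge4\pi|\deg|$ on each piece. The paper picks $R_n\uparrow\infty$ diagonally where you take iterated limits.

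The nested-bubble bookkeeping you single out as the main obstacle is not needed. Because \eqref{eq:bubble-rescaled} holds in $C^1_{\mathrm{loc}}(\R^2)$ with no punctures, no bubble can concentrate at bounded rescaled distance inside another: its energy would have to sit where the outer rescaled map has uniformly bounded gradient. Combined with \eqref{eq:bubble-orthogonality}, the balls $B(x_{j,n},R\lambda_{j,n})$ are therefore pairwise disjoint for large $n$, as the paper asserts.
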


\begin{proof}
Write
\[
u_n:=U(t_n),
\qquad
\mathcal J(v):=v\cdot(\partial_1 v\times \partial_2 v).
\]
By Lemma~\ref{lem:degree-energy}, for every $n$,
\[
\operatorname{deg}(u_n)=\operatorname{deg}(u_0),
\qquad
|\mathcal J(v)|\le \frac12|\nabla v|^2.
\]

\smallskip
\noindent Step 1: convergence of Jacobians away from singular points.
Fix $0<\rho<\rho_0$ and set
\[
K_\rho:=\R^2\setminus \bigcup_{\alpha=1}^N B(a_\alpha,\rho).
\]
By \eqref{eq:bubble-strong-away},
\[
u_n-u_\ast\to 0\qquad\text{strongly in }H^1(K_\rho).
\]
Therefore
\begin{align}
\bigl\|\partial_1 u_n\times \partial_2 u_n-\partial_1 u_\ast\times \partial_2 u_\ast\bigr\|_{L^1(K_\rho)}
&\le \norm{\partial_1(u_n-u_\ast)}_{L^2(K_\rho)}\norm{\partial_2u_n}_{L^2(K_\rho)}
\nonumber\\
&\quad+\norm{\partial_1u_\ast}_{L^2(K_\rho)}\norm{\partial_2(u_n-u_\ast)}_{L^2(K_\rho)}
\to0.
\label{eq:cross-away-conv}
\end{align}
Thus the sequence
\[
g_n:=\bigl|\partial_1 u_n\times \partial_2 u_n\bigr|
\]
is uniformly integrable in $L^1(K_\rho)$. Since also $u_n\to u_\ast$ in $H^1(K_\rho)$, after passing to a subsequence we may assume $u_n\to u_\ast$ a.e. on $K_\rho$; Vitali's theorem then gives
\begin{equation*}
\int_{K_\rho}|u_n-u_\ast|\,\bigl|\partial_1 u_n\times \partial_2 u_n\bigr|\,dx\to0.
\end{equation*}
Combining this with \eqref{eq:cross-away-conv}, we get
\begin{align}
\int_{K_\rho}\mathcal J(u_n)\,dx-\int_{K_\rho}\mathcal J(u_\ast)\,dx
={}&
\int_{K_\rho}(u_n-u_\ast)\cdot(\partial_1u_n\times\partial_2u_n)\,dx
\nonumber\\
&+\int_{K_\rho}u_\ast\cdot\Bigl[(\partial_1u_n\times\partial_2u_n)-(\partial_1u_\ast\times\partial_2u_\ast)\Bigr]dx
\nonumber\\
\longrightarrow{}0.
\label{eq:jacobian-away}
\end{align}

\smallskip
\noindent Step 2: diagonal extraction of bubble balls.
Fix the above $\rho$. Choose an increasing sequence $R_m\uparrow\infty$ such that, for every $j$,
\begin{equation}\label{eq:Rm-tail-choice}
E\bigl(\omega_j;\R^2\setminus B(0,R_m)\bigr)
+
\int_{\R^2\setminus B(0,R_m)}|\mathcal J(\omega_j)|\,dy
\le 2^{-m}.
\end{equation}
By \eqref{eq:bubble-rescaled}, for each $m$ one can choose $N_m$ such that, whenever $n\ge N_m$,
\begin{equation}\label{eq:rescaled-C1-choice}
\sum_{j=1}^J
\norm{u_n(x_{j,n}+\lambda_{j,n}\cdot)-\omega_j}_{C^1(B(0,R_m))}
\le 2^{-m}.
\end{equation}
Using \eqref{eq:bubble-center-limit}--\eqref{eq:bubble-orthogonality} and increasing $N_m$ if necessary, we may also require that for all $n\ge N_m$,
\begin{equation*}
B(x_{j,n},R_m\lambda_{j,n})\subset B(a_{\ell(j)},\rho),
\qquad
B(x_{i,n},R_m\lambda_{i,n})\cap B(x_{j,n},R_m\lambda_{j,n})=\varnothing\qquad (i\ne j).
\end{equation*}
Take an increasing subsequence $n_m\ge N_m$ and relabel it by $n$. Put
\[
R_n:=R_m,
\qquad
B_{j,n}:=B(x_{j,n},R_n\lambda_{j,n}).
\]
Then $R_n\uparrow\infty$, and \eqref{eq:rescaled-C1-choice}, \eqref{eq:Rm-tail-choice} imply
\begin{align}
E(u_n;B_{j,n})
&=E\bigl(u_n(x_{j,n}+\lambda_{j,n}\cdot);B(0,R_n)\bigr)
\longrightarrow E(\omega_j),
\label{eq:bubble-ball-energy-conv}
\\
\int_{B_{j,n}}\mathcal J(u_n)\,dx
&=
\int_{B(0,R_n)}\mathcal J\bigl(u_n(x_{j,n}+\lambda_{j,n}y)\bigr)\,dy
\longrightarrow
\int_{\R^2}\mathcal J(\omega_j)\,dy
=
4\pi d_j.
\label{eq:jacobian-bubble}
\end{align}

\smallskip
\noindent Step 3: smallness of the neck energy.
Define the neck region
\[
\Gamma_{n,\rho}
:=
\Bigl(\bigcup_{\alpha=1}^N B(a_\alpha,\rho)\Bigr)
\setminus
\Bigl(\bigcup_{j=1}^J B_{j,n}\Bigr).
\]
By disjointness and the decomposition,
\begin{equation*}
E(u_n)
=
E(u_n;K_\rho)
+
\sum_{j=1}^J E(u_n;B_{j,n})
+
E(u_n;\Gamma_{n,\rho}).
\end{equation*}
Using \eqref{eq:bubble-strong-away}, \eqref{eq:bubble-ball-energy-conv}, and \eqref{eq:bubble-energy-split}, we obtain
\begin{equation*}
\lim_{n\to\infty}E(u_n;\Gamma_{n,\rho})
=
E\Bigl(u_\ast;\bigcup_{\alpha=1}^N B(a_\alpha,\rho)\Bigr).
\end{equation*}
Consequently,
\begin{equation}\label{eq:neck-jacobian-bound}
\limsup_{n\to\infty}
\left|
\int_{\Gamma_{n,\rho}}\mathcal J(u_n)\,dx
\right|
\le
E\Bigl(u_\ast;\bigcup_{\alpha=1}^N B(a_\alpha,\rho)\Bigr).
\end{equation}

\smallskip
\noindent Step 4: splitting of the degree.
By the region decomposition,
\begin{align}
4\pi\,\operatorname{deg}(u_0)
&=4\pi\,\operatorname{deg}(u_n)
=\int_{\R^2}\mathcal J(u_n)\,dx
\nonumber\\
&=
\int_{K_\rho}\mathcal J(u_n)\,dx
+
\sum_{j=1}^J\int_{B_{j,n}}\mathcal J(u_n)\,dx
+
\int_{\Gamma_{n,\rho}}\mathcal J(u_n)\,dx.
\notag
\end{align}
Letting $n\to\infty$ and using \eqref{eq:jacobian-away}, \eqref{eq:jacobian-bubble}, and \eqref{eq:neck-jacobian-bound}, we get
\begin{equation*}
4\pi\,\operatorname{deg}(u_0)
=
\int_{K_\rho}\mathcal J(u_\ast)\,dx
+
4\pi\sum_{j=1}^J d_j
+
\varepsilon_\rho,
\qquad
|\varepsilon_\rho|
\le
E\Bigl(u_\ast;\bigcup_{\alpha=1}^N B(a_\alpha,\rho)\Bigr).
\end{equation*}
Finally let $\rho\downarrow0$. Since $u_\ast\in H_Q^1(\R^2;\Sph)$,
\[
\int_{K_\rho}\mathcal J(u_\ast)\,dx
\longrightarrow
\int_{\R^2}\mathcal J(u_\ast)\,dx
=4\pi\operatorname{deg}(u_\ast),
\qquad
E\Bigl(u_\ast;\bigcup_{\alpha=1}^N B(a_\alpha,\rho)\Bigr)\to0.
\]
Thus \eqref{eq:bubble-degree-split} holds.

\smallskip
\noindent Step 5: the energy lower bound.
By \eqref{eq:bubble-energy-split} and Lemma~\ref{lem:degree-energy},
\[
\lim_{n\to\infty}E(u_n)
=
E(u_\ast)+\sum_{j=1}^J E(\omega_j)
\ge
4\pi\,\bigl|\operatorname{deg}(u_\ast)\bigr|
+
4\pi\sum_{j=1}^J |d_j|,
\]
which is \eqref{eq:bubble-energy-lower}.
\end{proof}

With the classical profile decomposition, the singularity bridge for endpoints of maximal strong solutions, and the degree splitting derived in the body of the paper in hand, the strong-solution framework of the first four sections can be spliced directly with the geometric quantization of bubble trees. The key point is that, in the degree-zero sector, the minimal energy cost of any nontrivial bubble tree is not $4\pi$, but $8\pi$.

\begin{proposition}[$8\pi$ obstruction]\label{prop:deg0-8pi-barrier}
Let $u$ be a maximal strong solution of \eqref{eq:LLG-intro}, and assume
\[
0<\Tstar<\infty,
\qquad
\operatorname{deg}(u_0)=0.
\]
Then necessarily
\begin{equation}\label{eq:8pi-barrier}
E(u_0)\ge 8\pi.
\end{equation}
\end{proposition}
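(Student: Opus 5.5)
We argue by combining Proposition~\ref{prop:maximal-strong-singular}, Theorem~\ref{thm:bubble-degree-input} and Lemma~\ref{lem:degree-splitting}.

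\textbf{Step 1: produce a bubble tree.} Since $0<\Tstar<\infty$, Proposition~\ref{prop:maximal-strong-singular} shows that $u\in\mathcal W([0,\Tstar)\times\R^2)\cap C^\infty((0,\Tstar)\times\R^2;\Sph)$ and that $\Tstar$ is a finite singularity time. Theorem~\ref{thm:bubble-degree-input} therefore applies with $U=u$ and $T_{\mathrm s}=\Tstar$. It gives a sequence $t_n\uparrow\Tstar$, a weak limit $u_\ast$, and nonconstant harmonic maps $\omega_1,\dots,\omega_J$.

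\textbf{Step 2: show there is at least one bubble.} We need $J\ge1$. Suppose instead that $J=0$. Then the energy splitting \eqref{eq:bubble-energy-split} says $E(u(t_n))\to E(u_\ast)$. Combined with weak convergence, this upgrades to $u(t_n)\to u_\ast$ strongly in $\dot H^1$, so no energy concentrates at the points $a_\alpha$.

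Absence of energy concentration is expected to exclude a singularity at $\Tstar$, via the Struwe-type $\varepsilon$-regularity underlying Theorem~\ref{thm:bubble-degree-input}. In the framework of \cite{LiZhao}, the decomposition at a genuine singularity always contains at least one bubble. So I plan to read $J\ge1$ as part of the content of that input.

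This is the step I expect to be the main obstacle, since $J\in\mathbb N$ formally allows $J=0$. If an argument internal to the paper is required, I would try to reach a contradiction with Theorem~\ref{thm:L4-cont}. Strong $H^1$ convergence gives uniform small local energy on balls of a fixed radius for $t$ near $\Tstar$. Local $\varepsilon$-regularity then gives a bound on the local $L^4_{t,x}$ norm of $\nabla u$, and summing over a covering yields $\nabla u\in L^4_{t,x}$ on a terminal window, contradicting maximality.

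\textbf{Step 3: degree bookkeeping.} With $d_j=\deg(\omega_j)$, each $d_j$ is a nonzero integer. For $\omega_j$ this holds because every nonconstant harmonic map $\Sph\to\Sph$ is $\pm$-holomorphic, and so has $E(\omega_j)=4\pi|d_j|$ with $d_j\neq0$.

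Since $\deg(u_0)=0$, \eqref{eq:bubble-degree-split} gives $\deg(u_\ast)+\sum_j d_j=0$. We now split into two cases.
\begin{itemize}
\item If $J\ge2$, then $\sum_j|d_j|\ge2$.
\item If $J=1$, then $\deg(u_\ast)=-d_1\neq0$, so $|\deg(u_\ast)|\ge1$.
\end{itemize}
In either case $|\deg(u_\ast)|+\sum_j|d_j|\ge2$.

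\textbf{Step 4: energy conclusion.} By \eqref{eq:bubble-energy-lower} and the energy law (Lemma~\ref{lem:energy}),
\[
E(u_0)\ \ge\ \lim_{n\to\infty}E(u(t_n))\ \ge\ 4\pi\Bigl(|\deg(u_\ast)|+\sum_{j}|d_j|\Bigr)\ \ge\ 8\pi ,
\]
which is \eqref{eq:8pi-barrier}.
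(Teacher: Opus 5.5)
Your proposal is correct and follows essentially the same route as the paper. Proposition~\ref{prop:maximal-strong-singular} feeds Theorem~\ref{thm:bubble-degree-input}, Lemma~\ref{lem:degree-splitting} gives $\deg(u_\ast)=-\sum_j d_j$, and the integer bound $|\sum_j d_j|+\sum_j|d_j|\ge2$ closes the argument via energy monotonicity; you split by $J=1$ versus $J\ge2$ where the paper splits by whether $\sum_j d_j$ vanishes. The paper likewise simply asserts $J\ge1$ and $d_j\neq0$ as part of the bubbling input, so reading $J\ge1$ as contained in the cited decomposition is exactly what the paper does, and your fallback $\varepsilon$-regularity sketch is not needed (as written it would also require a local energy inequality to pass from the subsequence $t_n$ to all times near $\Tstar$).
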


\begin{proof}
By Lemma \ref{lem:degree-energy}, for every $t\in[0,\Tstar)$,
\begin{equation}\label{eq:deg-const-before-T}
\operatorname{deg}(u(t))=\operatorname{deg}(u_0)=0.
\end{equation}
On the other hand, Proposition~\ref{prop:maximal-strong-singular} implies that $u$ is smooth on $(0,\Tstar)$ and that $\Tstar$ is a finite singularity time in the sense of Theorem~\ref{thm:bubble-degree-input}. Thus one may choose $t_n\uparrow \Tstar$, a limit map $u_\ast$, and harmonic bubbles $\omega_1,\dots,\omega_J$ such that \eqref{eq:bubble-energy-split} holds. Set
\[
d_j:=\operatorname{deg}(\omega_j)\in\mathbb Z\setminus\{0\}.
\]
By Lemma \ref{lem:degree-splitting} and \eqref{eq:deg-const-before-T},
\begin{equation*}
\operatorname{deg}(u_\ast)=-\sum_{j=1}^J d_j.
\end{equation*}
Using energy monotonicity and \eqref{eq:bubble-energy-lower} in Lemma \ref{lem:degree-splitting}, we get
\begin{align}
E(u_0)
&\ge \lim_{n\to\infty}E\bigl(u(t_n)\bigr)
\label{eq:energy-lower-deg0}
\\
&\ge
4\pi\,\bigl|\operatorname{deg}(u_\ast)\bigr|
+
4\pi\sum_{j=1}^J |d_j|
\nonumber\\
&=
4\pi\Bigl(\Bigl|\sum_{j=1}^J d_j\Bigr|+\sum_{j=1}^J |d_j|\Bigr).
\nonumber
\end{align}
Since every $d_j\neq0$ and $J\ge1$,
\begin{equation}\label{eq:integer-lower-bound}
\Bigl|\sum_{j=1}^J d_j\Bigr|+\sum_{j=1}^J |d_j|\ge 2.
\end{equation}
Indeed, if $\sum_j d_j\neq0$, then the left-hand side is at least $1+1=2$; if $\sum_j d_j=0$, then because not all $d_j$ are zero, necessarily $\sum_j |d_j|\ge2$. Substituting \eqref{eq:integer-lower-bound} into \eqref{eq:energy-lower-deg0} gives
\[
E(u_0)\ge 8\pi,
\]
which is \eqref{eq:8pi-barrier}.
\end{proof}

\begin{corollary}[Degree-zero divergence]\label{cor:deg0-analytic-div}
Let $u$ be a maximal strong solution of \eqref{eq:LLG-intro}, and assume
\[
0<\Tstar<\infty,\qquad \deg(u_0)=0.
\]
Then, besides \eqref{eq:8pi-barrier}, for every $T_0\in[0,\Tstar)$ and every dyadic index $k_\ast\in\mathbb Z$,
\begin{align}
\int_{T_0}^{\Tstar}\int_{\R^2}|\nabla u(t,x)|^4\,dxdt&=\infty,\label{eq:deg0-L4-div}\\
\norm{P_{\ge k_\ast}\nabla u}_{L^4_{t,x}((T_0,\Tstar)\times\R^2)}&=\infty.\label{eq:deg0-uv-div}
\end{align}
\end{corollary}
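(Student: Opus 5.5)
The plan is to argue by contradiction, using only the two continuation theorems already proved; Proposition~\ref{prop:deg0-8pi-barrier} is not needed for the divergence statements themselves. The bound \eqref{eq:8pi-barrier} is simply restated from that proposition, which applies verbatim under the hypotheses $0<\Tstar<\infty$ and $\deg(u_0)=0$.

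For \eqref{eq:deg0-L4-div}, fix $T_0\in[0,\Tstar)$ and suppose the integral $\int_{T_0}^{\Tstar}\int_{\R^2}|\nabla u|^4\,dxdt$ were finite. This is exactly hypothesis \eqref{eq:L4-ass} of Theorem~\ref{thm:L4-cont}. That theorem gives $\sup_{T_0\le t<\Tstar}\norm{u(t)-Q}_{H^2}<\infty$, so $u$ extends beyond $\Tstar$. This contradicts the maximality of $\Tstar$. Since $T_0$ was arbitrary, the integral diverges for every $T_0$.

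For \eqref{eq:deg0-uv-div}, fix $T_0$ and $k_\ast\in\mathbb Z$ and suppose $\norm{P_{\ge k_\ast}\nabla u}_{L^4_{t,x}((T_0,\Tstar)\times\R^2)}<\infty$. This is hypothesis \eqref{eq:uv-tail-ass} on $I=(T_0,\Tstar)$. Theorem~\ref{thm:uv-tail-cont} then again continues $u$ past $\Tstar$, which is a contradiction. Equivalently, one can see this directly: Lemma~\ref{lem:low-freq} bounds the low-frequency piece by $C(\Tstar-T_0)^{1/4}2^{k_\ast/2}E(u_0)^{1/2}<\infty$. The triangle inequality would then make the full $L^4_{t,x}$ norm finite, contradicting \eqref{eq:deg0-L4-div}.

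There is essentially no obstacle, since everything is a direct contrapositive of earlier results. The only point to check is that the degree-zero hypothesis plays no role in the divergence statements. These hold for any finite maximal time; the corollary merely records them together with the energy obstruction.
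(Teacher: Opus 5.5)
Your proposal is correct and follows the paper's proof: the bound \eqref{eq:8pi-barrier} is quoted from Proposition~\ref{prop:deg0-8pi-barrier}, and \eqref{eq:deg0-L4-div} and \eqref{eq:deg0-uv-div} are the contrapositives of Theorems~\ref{thm:L4-cont} and~\ref{thm:uv-tail-cont}. You are also right that the degree-zero hypothesis enters only through the $8\pi$ bound, not through the two divergence statements.
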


\begin{proof}
Inequality \eqref{eq:8pi-barrier} has already been proved in Proposition~\ref{prop:deg0-8pi-barrier}. If for some $T_0<\Tstar$ one had
\[
\int_{T_0}^{\Tstar}\int_{\R^2}|\nabla u|^4<\infty,
\]
then Theorem~\ref{thm:L4-cont} would immediately imply that $u$ can be continued beyond $t=\Tstar$, contradicting maximality. Hence \eqref{eq:deg0-L4-div} holds. Similarly, if for some $k_\ast$ one had
\[
\norm{P_{\ge k_\ast}\nabla u}_{L^4_{t,x}((T_0,\Tstar)\times\R^2)}<\infty,
\]
then Theorem~\ref{thm:uv-tail-cont} would again give a continuation contradiction, and therefore \eqref{eq:deg0-uv-div} holds.
\end{proof}

The proposition above gives the core geometric obstruction desired in this section: in the degree-zero sector, finite-time singularities can occur only starting at $8\pi$; Corollary~\ref{cor:deg0-analytic-div} then genuinely splices this geometric obstruction with the continuation mechanism established in the first four sections. We therefore conclude this section with the proof of Theorem~\ref{thm:deg0-8pi-global} from the introduction: when the degree is zero and the energy is below $8\pi$, the maximal strong solution cannot terminate in finite time.

\begin{proof}[Proof of Theorem \ref{thm:deg0-8pi-global}]
Assume the contrary, namely
\[
0<\Tstar<\infty.
\]
Then Corollary~\ref{cor:deg0-analytic-div} gives
\[
E(u_0)\ge 8\pi,
\qquad
\int_{T_0}^{\Tstar}\int_{\R^2}|\nabla u|^4=\infty,
\qquad
\norm{P_{\ge k_\ast}\nabla u}_{L^4_{t,x}((T_0,\Tstar)\times\R^2)}=\infty
\]
for every $T_0<\Tstar$ and every $k_\ast\in\mathbb Z$. The first of these statements already contradicts the assumption \eqref{eq:deg0-8pi-main-ass}. Thus a finite terminal time cannot occur, and hence
\[
\Tstar=\infty.
\]
The stated local regularity of the global strong solution is exactly the continuation of the definition of maximal strong solution on arbitrary finite time intervals.
\end{proof}

\begin{remark}
The relation between the result of this section and the first four sections can be summarized as follows: the first four sections provide the analytic continuation mechanism at a finite terminal time, while this section shows that, in the degree-zero sector with energy below $8\pi$, the geometric carrier that would trigger a finite-time singularity--a nontrivial bubble tree--does not exist. Therefore the $8\pi$ improvement is not the product of a new $H^2$ differential inequality, but the combination of the strong-solution framework of this paper with bubble quantization under the degree-zero constraint.
\end{remark}

\begin{remark}
The degree-zero hypothesis is not a superficial decoration. Known $1$-equivariant finite-time blow-up is built around the lowest-energy nonconstant steady state. Consequently, one cannot expect a uniform general threshold, valid across all topological sectors, that is strictly larger than $4\pi$. In contrast, the key to the improvement to $8\pi$ in this section is that degree zero rules out the cheapest singularity mechanism consisting of ``one bubble core of degree $\pm1$ plus a constant background.'' For a related $1$-equivariant blow-up construction, see \cite{XuZhao}.
\end{remark}

\section*{Funding}
Pang-hung Chung was supported by the Project ``Research on Nonlinear Partial Differential Equations'' (No. 2024KYCXTD018), Special Projects in Key Areas of Guangdong Province (No. ZDZX1088), and the Fund for Science and Technology of Guangzhou (No. 202102080428).

\section*{Author contributions}
Pang-hung Chung and Dan Han contributed to the conception, mathematical analysis, verification, revision, and preparation of the manuscript. Both authors read and approved the final manuscript.

\section*{Data availability}
Data sharing is not applicable to this article as no datasets were generated or analyzed during the current study.

\section*{Conflict of interest}
The authors declare that they have no conflict of interest.

\end{document}